\colorlet{darkgreen}{green!70!black}
\newcommand{\myfixwrapfig}{~\vspace{-28pt}}
\title{Kauffman states and Heegaard diagrams for tangles}
\author{Claudius Zibrowius}
\address{Department of Mathematics,
	The University of British Columbia,
	1984 Mathematics Road,
	Vancouver, BC,
	Canada, V6T 1Z2}
\email{claudius.zibrowius@posteo.net}
\urladdr{https://www.dpmms.cam.ac.uk/~cbz20/}
\newtheorem{theorem}{Theorem}[section]
\newtheorem*{theorem*}{Theorem}
\newtheorem{lemma}[theorem]{Lemma}
\newtheorem{conjecture}[theorem]{Conjecture}
\newtheorem{corollary}[theorem]{Corollary}
\newtheorem{proposition}[theorem]{Proposition}
\theoremstyle{definition}
\newtheorem{definition}[theorem]{Definition}
\newtheorem{example}[theorem]{Example}
\newtheorem{observation}[theorem]{Observation}
\newtheorem{Remark}[theorem]{Remark}
\DeclareMathOperator{\CFT}{\widehat{CFT}}
\DeclareMathOperator{\HFT}{\widehat{HFT}}
\DeclareMathOperator{\BSD}{\widehat{BSD}}
\DeclareMathOperator{\BSA}{\widehat{BSA}}
\DeclareMathOperator{\SFH}{SFH}
\DeclareMathOperator{\HFL}{\widehat{HFL}}
\DeclareMathOperator{\pip}{\pi^\partial_2}
\DeclareMathOperator{\Spinc}{Spin^\mathit{c}} 
\DeclareMathOperator{\Gru}{\underline{Gr}}
\DeclareMathOperator{\gru}{\underline{gr}}
\DeclareMathOperator{\gr}{gr}
\DeclareMathOperator{\lk}{lk} 
\DeclareMathOperator{\m}{m} 
\DeclareMathOperator{\rr}{r} 
\DeclareMathOperator{\im}{im}
\newcommand{\A}{\boldsymbol{\alpha}}
\newcommand{\B}{\boldsymbol{\beta}}
\newcommand{\Ac}{\boldsymbol{\alpha}^c}
\newcommand{\Aa}{\operatorname{\boldsymbol{\alpha}}^a}
\newcommand{\aaa}{\operatorname{\alpha}^a}
\newcommand{\x}{\boldsymbol{x}}
\newcommand{\y}{\boldsymbol{y}}
\newcommand{\z}{\boldsymbol{z}}
\def\co{\colon\thinspace}
\begin{document}
	\psset{arrowsize=3pt 2}
	
	\begin{abstract}
		We define polynomial tangle invariants~$\nabla_T^s$ via Kauffman states and Alexander codes and investigate some of their properties. In particular, we prove symmetry relations for~$\nabla_T^s$ of 4-ended tangles and deduce that the multivariable Alexander polynomial is invariant under Conway mutation.
		The invariants~$\nabla_T^s$ can be interpreted naturally via Heegaard diagrams for tangles. This leads to a categorified version of~$\nabla_T^s$: a Heegaard Floer homology~$\HFT$ for tangles, which we define as a bordered sutured invariant. We discuss a bigrading on~$\HFT$ and prove symmetry relations for~$\HFT$ of 4-ended tangles that echo those for~$\nabla_T^s$. 
	\end{abstract}
	\maketitle
	
	
\section*{Introduction}\label{sec:intro}\pdfbookmark[section]{Introduction}{intro}
Let $L$ be a link in the 3-sphere~$S^3$. Consider an embedded closed 3-ball~$B^3\subset S^3$ whose boundary intersects~$L$ transversely. Then, modulo a parametrization of the boundary $\partial B^3$, the embedding $L\cap B^3\hookrightarrow B^3$ is essentially what we call a tangle, see definition~\ref{def:tangle}. 
The first half of this paper is concerned with the definition and study of polynomial invariants~$\nabla_T^s$ for such tangles. These invariants should be viewed as a generalisation of the Conway potential function, ie the normalised Alexander polynomial, which is a classical knot and link invariant \cite{Alexander}. Using Heegaard diagrams for tangles, we interpret $\nabla_T^s$ as the graded Euler characteristics of some homological Heegaard-Floer-type invariants~$\HFT(T,s)$, which we define in the second half of this paper.

\subsection*{The polynomial tangle invariants $\nabla_T^s$} We start from Kauffman's combinatorial definition of the Alexander polynomial \cite{Kauffman} and adapt it to tangles. In general, from an oriented tangle diagram~$T$, we obtain a finite set of Laurent polynomials $\nabla_T^s$ in the same number of variables as there are tangle components. This finite set of invariants is indexed by some additional input data for tangles, which we call the sites $s$ of~$T$. For example, a tangle diagram with four ends, such as the one in figure~\ref{fig:INTRO2m3pt}, has four sites, one for each open region of the diagram. For the general definition of sites, see definition~\ref{def:basic}.

\begin{theorem}[\ref{thm:nablaisaninvariant} and~\ref{thm:twoended}]
	For each site \(s\) of an oriented tangle \(T\), \(\nabla_T^s\) is an invariant of \(T\). Furthermore, if \(T\) is a tangle with two ends, there is exactly one site \(s\), namely \(s=\emptyset\), and \(\nabla^\emptyset_T\) is (up to some factor) equal to the Conway potential function \(\nabla_L\) of the link or knot \(L\) obtained by joining the two ends of \(T\).
	\end{theorem}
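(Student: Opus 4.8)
The plan is to treat the two assertions separately, since the first (reference~\ref{thm:nablaisaninvariant}) is a statement about diagram-independence, whereas the second (reference~\ref{thm:twoended}) is a comparison with the classical construction.

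For invariance, I would fix an oriented tangle diagram representing~$T$ together with a site~$s$, so that $\nabla_T^s$ is presented as a state sum: a sum over those Kauffman states that are compatible with~$s$, each weighted by the monomial prescribed by its Alexander code. Two diagrams represent the same tangle precisely when they are related by a finite sequence of the three Reidemeister moves performed in the interior of the tangle, together with ambient isotopies respecting the boundary parametrisation. The task therefore reduces to checking invariance of the state sum under each such move. For each move I would set up an explicit correspondence between the Kauffman states of the two diagrams, restricted to the site~$s$, and verify that the associated monomials agree after summation: the R1 move introduces a forced local marker whose monomial I expect to contribute trivially; the R2 move should pair states so that the spurious contributions cancel; and R3 requires a case analysis matching states across the triangle. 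Since all of these moves occur away from the boundary, the local combinatorics at each crossing is identical to Kauffman's, so his arguments transfer, and it remains only to confirm that the site~$s$ and the boundary markers are unaffected.

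For the two-ended case, I would first read off from the definition of sites (definition~\ref{def:site}) that a tangle with exactly two ends admits a single site, namely~$s=\emptyset$, since the boundary combinatorics leaves no room for a further choice. Joining the two ends produces an ordinary link diagram~$L$, and the Kauffman states of the two-ended tangle at the site~$\emptyset$ are then in natural bijection with the classical Kauffman states of~$L$. Comparing the Alexander-code weights with Kauffman's original monomial assignments, and invoking his theorem~\cite{Kauffman} that the resulting state sum computes the Alexander polynomial, I would identify $\nabla_T^\emptyset$ with $\nabla_L$ up to the stated overall factor.

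The main obstacle I anticipate is twofold. In the invariance part, the R3 move is the combinatorially delicate step, and it is exactly there that keeping track of the site~$s$ alongside the signs and variable-exponents recorded by the Alexander code is most error-prone. In the comparison part, the genuine subtlety is not the bijection of states but the normalisation: the Conway potential function is a well-defined invariant rather than one defined only up to a unit, so pinning down the precise factor relating $\nabla_T^\emptyset$ to $\nabla_L$ demands a careful matching of sign and variable conventions, which is why the statement asserts only equality up to some factor.
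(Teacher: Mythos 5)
Your reduction of the invariance statement to a Reidemeister-move check is exactly the paper's strategy, but you have skipped a step that the paper must (and does) address first: the state sum $\nabla^s_T$ is only defined for \emph{connected} diagrams (definition~\ref{def:basic}), because only for those does a site, i.e.\ a set of boundary arcs, correspond to a set of open regions in which markers may sit. A generic sequence of Reidemeister moves joining two connected diagrams can pass through disconnected ones, and for those intermediate diagrams the quantity you want to compare move-by-move is not even defined. The paper closes this hole with lemma~\ref{lem:RMmovesconnectdiagrams}, pulling an open strand once around the diagram by RM~II moves so that the whole sequence stays connected; you need this lemma, or an equivalent device, before your local analysis is well-founded. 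Apart from that, your treatment of the moves matches the paper's proof of theorem~\ref{thm:nablaisaninvariant}: RM~I is the forced-marker argument you describe, RM~II is the $p$ versus $hp$ cancellation at $h=-1$, orientation cases are reduced via propositions~\ref{prop:mirrortangle} and~\ref{prop:reverseorientI}, and RM~III --- which you rightly flag as the delicate case --- is in fact delegated by the paper to the ancillary Mathematica notebook~\cite{APT.nb} rather than done by hand.

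For the two-ended statement your route is genuinely different from the paper's. You propose to biject the generalised Kauffman states at the site $\emptyset$ with Kauffman's classical states of the closure $L$ and then quote Kauffman's theorem~\cite{Kauffman}; the paper instead normalises the state sum by $\tfrac{1}{c-c^{-1}}$ and verifies (again in~\cite{APT.nb}) that the result satisfies the axioms of~\cite{jiang14}, which characterise the Conway potential function uniquely. The difference bites precisely at the point you identify as the real subtlety: Kauffman's theorem determines the Alexander polynomial only up to multiplication by units, whereas $\nabla_L$ is normalised on the nose; moreover the Alexander codes of figure~\ref{figAlexCodesForNabla} differ state-by-state from Kauffman's code in figure~\ref{AlexCode}, so your ``comparison of weights'' must additionally show that the two state sums differ by a single global monomial depending only on the diagram. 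Your approach therefore suffices for the introduction's ``up to some factor'' phrasing and has the merit of being conceptual and computer-free, but it does not by itself recover the exact identity $\nabla_L=\tfrac{1}{c-c^{-1}}\nabla^\emptyset_T$ of theorem~\ref{thm:twoended}; the axiomatic route is what pins that factor down.
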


\begin{wrapfigure}{r}{0.3333\textwidth}
	\centering
	\vspace*{-10pt}
	{\psset{unit=0.55}
		\begin{pspicture}[showgrid=false](-4.2,-3.1)(2.2,3.1)
		\psecurve(-2.5,1.5)(0,2)(0.75,1)(-0.75,-1)(0,-2)(0.97,-2.24)(2,-2)
		\psecurve(2,2)(0.97,2.24)(0,2)(-0.75,1)(0.75,-1)(0,-2)(-2.5,-1.5)(-3.25,0)(-2.5,1.5)(0,2)(0.75,1)
		\psecurve(-6,1.5)(-3.3,1.85)(-2.5,1.5)(-1.85,0)(-2.5,-1.5)(-3.3,-1.85)(-6,-1.5)
		\pscircle*[linecolor=white](-2.5,1.5){0.2}
		
		\psecurve(0.75,-1)(0,-2)(-2.5,-1.5)(-3.25,0)(-2.5,1.5)
		
		\pscircle*[linecolor=white](0,2){0.2}
		\pscircle*[linecolor=white](0,0){0.2}
		\pscircle*[linecolor=white](0,-2){0.2}
		
		\psecurve(0.75,1)(-0.75,-1)(0,-2)(0.97,-2.24)(2,-2)
		\psecurve(0,2)(-0.75,1)(0.75,-1)(0,-2)
		\psecurve(-2.5,-1.5)(-3.25,0)(-2.5,1.5)(0,2)(0.75,1)(-0.75,-1)
		
		\pscircle*[linecolor=white](-2.5,-1.5){0.2}
		\psecurve(-2.5,1.5)(-1.85,0)(-2.5,-1.5)(-3.3,-1.85)(-6,-1.5)
		
		\psline(-3.25,-0.1)(-3.25,0.1)
		\pscircle[linestyle=dotted](-1,0){3.05}
		
		\uput{2.4}[180](-1,0){$a$}
		\uput{2.3}[-90](-1,0){$b$}
		\uput{2.1}[0](-1,0){$c$}
		\uput{2.3}[90](-1,0){$d$}
		\end{pspicture}
	\caption{A diagram of the $(2,-3)$-pretzel tangle with its four open regions $a$, $b$, $c$ and~$d$ corresponding to its sites}\label{fig:INTRO2m3pt}}
	\bigskip
	\begin{pspicture}(-2.1,-1.5)(2.1,1.05)
	\rput(-1.1,0){
		\SpecialCoor
		\psline{<-}(0.6;45)(1;45)
		\psline{->}(0.6;-45)(1;-45)
		
		\psline{->}(0.6;135)(1;135)
		\psline{<-}(0.6;-135)(1;-135)

		\uput{0.6}[0](0,0){$c$}
		\uput{0.6}[90](0,0){$d$}
		\uput{0.6}[180](0,0){$a$}
		\uput{0.6}[270](0,0){$b$}
		
		\pscircle[linestyle=dotted](0,0){1}
		
		\rput(0,-1.3){\textit{type 1}}
	}
	\rput(1.1,0){
		\psline{->}(0.6;45)(1;45)
		\psline{<-}(0.6;-135)(1;-135)
		
		\psline{->}(0.6;135)(1;135)
		\psline{<-}(0.6;-45)(1;-45)
		
		\uput{0.6}[0](0,0){$c$}
		\uput{0.6}[90](0,0){$d$}
		\uput{0.6}[180](0,0){$a$}
		\uput{0.6}[270](0,0){$b$}
		\pscircle[linestyle=dotted](0,0){1}
		\rput(0,-1.3){\textit{type 2}}
	}
	\end{pspicture}
	\caption{Two orientations on a 4-ended tangle}\label{fig:INTROorientations}
	\bigskip
	{\psset{unit=0.25}
		\begin{pspicture}(-8.01,-3.01)(8.01,3.01)
		\rput(-5,0){
			\pscircle[linestyle=dotted](0,0){3}
			\psline(3;45)(3;-135)
			\psline(3;-45)(3;135)
			\pscircle[fillcolor=white,fillstyle=solid](0,0){1.5}
			\rput(0,0){$R$}
		}
		\rput(0,0){$\longrightarrow$}
		\rput(5,0){
			\pscircle[linestyle=dotted](0,0){3}
			\psline(3;45)(3;-135)
			\psline(3;-45)(3;135)
			\pscircle[fillcolor=white,fillstyle=solid](0,0){1.5}
			\psrotate(0,0){180}{\rput(0,0){$R$}}
		}
		\end{pspicture}
		\caption{Conway mutation}\label{fig:MutationIntro}}
\end{wrapfigure}

The definition of $\nabla_T^s$ is a very straightforward generalisation of Kauffman's construction. However, I am unaware of any reference in the literature where these invariants have been studied. The underlying idea is perhaps similar to the one in \cite{ouka}, where state polynomials of tangle universes are introduced as a tool for proving the duality conjecture in formal knot theory.

Thus, studying the basic properties of $\nabla_T^s$ will be the first objective of this paper. We show that the invariants $\nabla_T^s$ satisfy glueing formulas (propositions~\ref{prop:annularglueing} and~\ref{prop:glueing}) which generalise the connected sum formula for knots and links. In section~\ref{sec:basicpropertiesofnabla}, we show that our tangle invariants also enjoy some other basic properties, similar to those of the Conway potential function. In particular, the invariants $\nabla_T^s$ are well-behaved under orientation reversal of the tangle strands and taking mirror images. In section \ref{sec:4endedandmutation}, we study $\nabla_T^s$ for 4-ended tangles and prove the following symmetry relations between different sites.

\begin{theorem}[\ref{thm:fourendedonecolour}]\label{thm:INTROfourendedonecolour}
	Let \(T\) be an oriented 4-ended tangle. Then up to cyclic permutation of the sites, it carries one of the two types of orientations shown in figure~\ref{fig:INTROorientations}. Let \(\rr(T)\) denote the same tangle with the opposite orientation on all strands. After identifying the colours of the two open components of \(T\), we have
	$$\text{\(\nabla^{a}_{T}=\nabla^{c}_{\rr(T)}=\nabla^{c}_{T}\)
	and
	\(\nabla^{d}_{T}=\nabla^{b}_{\rr(T)}=\nabla^{b}_{T}\)}$$
	for type 1. For type 2, all identities but the last hold true.
\end{theorem}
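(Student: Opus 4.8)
The plan is to work directly with the state-sum definition of $\nabla_T^s$, reducing everything to a bookkeeping of Kauffman states and of their individual contributions under two operations: global orientation reversal $\rr$ and the passage between opposite sites. Recall that $\nabla_T^s$ is a signed sum over the Kauffman states of $T$ relative to the marked region $s$, each state contributing a sign together with a monomial in the component variables $t_1,t_2$ whose exponents are read off from the Alexander code at the marked corner of each crossing. The two chained equalities in the statement are of genuinely different natures, and I would isolate them as two independent families of claims: the \emph{reversal-swap} identities $\nabla^a_T=\nabla^c_{\rr(T)}$ and $\nabla^d_T=\nabla^b_{\rr(T)}$, relating a site to the \emph{opposite} site of the reversed tangle; and the \emph{reversal-invariance} identities $\nabla^c_{\rr(T)}=\nabla^c_T$ and $\nabla^b_{\rr(T)}=\nabla^b_T$, asserting that reversing all strands fixes the polynomial at a given site.

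The preliminary classification into two types is an elementary flow argument: a $4$-ended tangle consists of two strands, so exactly two of the four ends are incoming and two are outgoing. Reading the in/out pattern cyclically around the boundary circle, the two incoming ends either alternate with the two outgoing ones or occur as a single adjacent pair, and up to cyclic permutation of $a,b,c,d$ these are precisely the type $1$ and type $2$ configurations of figure~\ref{fig:INTROorientations}. For the reversal-swap identities I would then invoke the orientation-reversal behaviour of $\nabla_T^s$ established in section~\ref{sec:basicpropertiesofnabla}, together with the effect of $\rr$ on the incoming/outgoing data that defines a site (definition~\ref{def:site}): reversing every strand inverts each variable $t_i$ and relabels a site by its opposite, the pairing $a\leftrightarrow c$, $b\leftrightarrow d$ being forced by this data. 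After identifying $t_1=t_2$ and tracking signs, one checks that the state sums match, giving $\nabla^a_T=\nabla^c_{\rr(T)}$ and $\nabla^d_T=\nabla^b_{\rr(T)}$ for \emph{both} types, since these relations are insensitive to the local orientation pattern at the site.

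The reversal-invariance identities are where the dichotomy between the two types appears, and I expect this to be the main obstacle. The mechanism is a sign- and grading-preserving involution on the states counted by $\nabla^s_T$ realising the effect of $\rr$; the key point is that such an involution exists exactly when the site $s$ is \emph{balanced}, meaning that the two ends flanking $s$ carry one incoming and one outgoing orientation, so that reversal acts symmetrically on the Kauffman markers created at those two ends. Tabulating the flanking orientations shows that in type $1$ (alternating) every site is balanced, whereas in type $2$ (adjacent pair) only $a$ and $c$ are balanced while $b$ and $d$ are not. This produces $\nabla^b_{\rr(T)}=\nabla^b_T$ for type $1$ and, since $b$ is unbalanced there, its failure for type $2$, while $\nabla^c_{\rr(T)}=\nabla^c_T$ survives for type $2$ because $c$ remains balanced; this matches the statement exactly, including the clause that for type $2$ all identities but the last hold.

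The delicate part throughout, and the real crux, is not the global structure but the local bookkeeping: one must verify that the local contribution at each crossing, namely its sign and the power of $t_1$ or $t_2$ dictated by the Alexander code, transforms correctly under orientation reversal, and that the markers at the two ends flanking the site behave as asserted. It is precisely at these boundary markers that the balanced versus unbalanced distinction enters, so I would carry out this local analysis with particular care, checking the finitely many crossing types and end configurations, and confirming that after identifying the two component gradings the surviving global sign is trivial.
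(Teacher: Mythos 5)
Your classification of the two orientation types is correct, but both mechanisms on which you base the actual identities fail, and the first rests on a false premise. You assert that reversing every strand ``relabels a site by its opposite'', forcing the pairing $a\leftrightarrow c$, $b\leftrightarrow d$, so that the swap identities $\nabla^a_T=\nabla^c_{\rr(T)}$ and $\nabla^d_T=\nabla^b_{\rr(T)}$ follow from the orientation-reversal results of section~\ref{sec:basicpropertiesofnabla}. But a site is, by definition~\ref{def:site}, a subset of the arcs $S^1\smallsetminus\im(T)$ on the boundary circle; it involves no orientation data at all, so $\rr$ acts trivially on sites. Accordingly, proposition~\ref{prop:reverseorientI} and corollary~\ref{coralloorientsrev} relate $\hat{\nabla}^s_{\rr(T)}$ to $\hat{\nabla}^s_T$ at the \emph{same} site $s$ --- at $h=-1$ they give $\nabla^s_{\rr(T)}=\rr(\nabla^s_T)$, where $\rr$ on the right is the substitution $t\mapsto -t^{-1}$ --- and never produce a transposition of sites. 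The cross-site identities are the genuinely hard content of the theorem. The paper proves $\nabla^a_T=\nabla^c_T$ by closing the tangle at site $a$ and at site $c$: both closures are diagrams of the same link, the Kauffman states of each closure are exactly the states of $T$ at the corresponding site, and theorem~\ref{thm:twoended} identifies both polynomials with $(t-t^{-1})\nabla_L$. For type 2 the sites $b$ and $d$ are flanked by like-oriented ends --- your ``unbalanced'' sites --- so no orientation-compatible closure exists there, and the paper instead adds a crossing on the left to produce a type 1 tangle $T'$, expands $\nabla^b_{T'}$ and $\nabla^d_{T'}$ via the glueing formula (proposition~\ref{prop:glueing}), and deduces $\nabla^b_T=\nabla^d_{\rr(T)}$ from the type 1 case. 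So the swap identities, far from being ``insensitive to the local orientation pattern'', are precisely where the type distinction bites.

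The second mechanism --- reversal invariance at a site $s$ via a sign-preserving involution on Kauffman states that exists exactly when $s$ is balanced --- is not a proof but a restatement of the conclusion: the involution is never constructed, and there is good reason to doubt a purely local one exists. By corollary~\ref{coralloorientsrev}, the identity $\nabla^s_{\rr(T)}=\nabla^s_T$ is equivalent to the palindromic symmetry $\nabla^s_T=\rr(\nabla^s_T)$; for links this is Hartley's theorem~\ref{thm:CPFprops}\,(\ref{CPFpropssymmetry})+(\ref{CPFpropRevOneStrand}), a global result about the Conway potential function which is not proved by crossing-by-crossing bookkeeping, and which the paper imports through the closure argument and corollary~\ref{cor:recoverlinkcase} rather than reproving. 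Your balanced/unbalanced dichotomy does correctly flag where the closure argument breaks down, but in your write-up it functions as an oracle: both the existence of the involution at balanced sites and its failure at unbalanced ones are asserted, not shown. A repair along the paper's lines would be: first prove the same-tangle identities $\nabla^a_T=\nabla^c_T$ (both types) and $\nabla^b_T=\nabla^d_T$ (type 1) by the closure argument; then obtain $\nabla^s_{\rr(T)}=\nabla^s_T$ at those sites from Hartley's symmetry of $\nabla_L$ applied to the closure; finally handle the remaining type 2 identity $\nabla^d_T=\nabla^b_{\rr(T)}$ by the crossing-addition trick.
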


\begin{definition}[Conway mutation]\label{def:INTROmutation}
	Given a link $L$, let $L'$ be the link obtained by cutting out a tangle diagram~$R$ with four ends from a diagram of~$L$ and glueing it back in after a half-rotation, see figure~\ref{fig:MutationIntro} for an illustration. We say $L'$ is a \textbf{Conway mutant} of~$L$ and we call~$R$ the \textbf{mutating tangle} in this mutation. If $L$ is oriented, we choose an orientation of $L'$ that agrees with the one for~$L$ outside of~$R$. If this means that we need to reverse the orientation of the two open components of~$R$, then we also reverse the orientation of all other components of~$R$ during the mutation; otherwise we do not change any orientation. For an alternative, but equivalent definition, see definition~\ref{def:mutation} and remark~\ref{rem:defmut}.
\end{definition}

Theorem~\ref{thm:INTROfourendedonecolour} along with the glueing formula for $\nabla_T^s$ gives rise to the following result.

\begin{corollary}[\ref{cor:mutation}]\label{cor:IntroMutationDECAT}
	The multivariate Alexander polynomial is invariant under Conway mutation after identifying the variables corresponding to the two open strands of the mutating tangle.
\end{corollary}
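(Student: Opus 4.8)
The plan is to combine the glueing formula (proposition~\ref{prop:glueing}) with the symmetry relations of theorem~\ref{thm:INTROfourendedonecolour}. Following definition~\ref{def:INTROmutation}, I would write the link $L$ as the result of glueing the mutating tangle $R$ into its complementary tangle $T$, where both $T$ and $R$ are $4$-ended; the mutant $L'$ is then obtained by glueing in the half-rotated copy of $R$ instead, while $T$ is left untouched. Closing $L=T\cup R$ completely, the glueing formula expresses the Conway potential function $\nabla_L$ (equivalently, via theorem~\ref{thm:twoended}, the multivariable Alexander polynomial) as a sum over matching pairs of sites of products $\nabla_T^{s}\cdot\nabla_R^{s'}$. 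Since only $R$ changes under mutation, it suffices to show that each factor $\nabla_R^{s'}$ is preserved, after identifying the Alexander variables of the two open strands of $R$.

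First I would record precisely how the half-rotation acts on the four sites $a,b,c,d$ of $R$. A Conway mutation is a rotation by $\pi$ about one of the three coordinate axes (two in the plane of the diagram, one perpendicular to it), and each such rotation induces an involution of the four ends and hence a permutation $\sigma$ of the sites. A direct check of the three cases shows that $\sigma$ is one of $(a\,c)$, $(b\,d)$, or $(a\,c)(b\,d)$. Alongside the permutation, each rotation carries the prescription from definition~\ref{def:INTROmutation} for when the orientations of the two open strands must be reversed, in which case all orientations of $R$ are reversed and the invariant to be matched is $\nabla_{\rr(R)}^{\sigma(s)}$ rather than $\nabla_R^{\sigma(s)}$.

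Next I would match these data against theorem~\ref{thm:INTROfourendedonecolour}, whose content is exactly that, after identifying the variables of the two open strands, $\nabla_R^{a}=\nabla_R^{c}$ and $\nabla_R^{d}=\nabla_R^{b}$ for type~$1$, together with the orientation-reversed identities $\nabla_R^{a}=\nabla_{\rr(R)}^{c}$ and $\nabla_R^{d}=\nabla_{\rr(R)}^{b}$ that also hold in type~$2$. Reading these relations as statements about the involutions $(a\,c)$ and $(b\,d)$, they say precisely that the permuted-and-possibly-reoriented invariant equals the original one. Thus for every mutation type the corresponding factor in the glueing sum is unchanged, each term is preserved, and therefore $\nabla_L=\nabla_{L'}$, which is the asserted invariance of the Alexander polynomial under Conway mutation.

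The main obstacle is the orientation bookkeeping. One must verify that the mutations whose induced site permutation is supported only by the orientation-\emph{reversed} identity of theorem~\ref{thm:INTROfourendedonecolour} are exactly those that force a reversal of the open strands of $R$ (triggering the passage to $\rr(R)$), and in particular that the delicate type~$2$ case—where the plain identity $\nabla_R^{d}=\nabla_R^{b}$ fails—is always accompanied by such a reorientation, so that the applicable identity is the one that genuinely holds. Equally delicate is tracking which variable of $R$ is glued to which variable of $T$ under each rotation, so that the single identification of the two open strands absorbs any swap between them. Once the site permutations and the orientation conventions of definition~\ref{def:INTROmutation} are lined up, the invariance follows formally from the glueing formula.
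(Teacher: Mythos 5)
Your proposal is correct and follows essentially the same route as the paper: the paper's proof of corollary~\ref{cor:mutation} likewise expands $\nabla_T^s=A\nabla^{a}_{T_\circ}+B\nabla^{b}_{T_\circ}+C\nabla^{c}_{T_\circ}+D\nabla^{d}_{T_\circ}$ via the glueing formula and then checks the three rotation axes case by case against theorem~\ref{thm:fourendedonecolour}, confirming exactly the orientation bookkeeping you flag as the main obstacle (the $x$-axis rotation forces reversal for both types, the $y$-axis only for type~1, the $z$-axis only for type~2, so the type~2 permutations involving $(b\,d)$ are indeed always accompanied by passage to $\rr(T_\circ)$). The only cosmetic difference is that the paper states and proves the result at the level of tangle mutants for all sites, deducing the Alexander-polynomial statement by closing up via theorem~\ref{thm:twoended}, and it dispatches the $z$-axis case as the composition of the $x$- and $y$-axis rotations rather than by a separate check.
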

This result has long been known for the univariate Alexander polynomial, see for example~\cite[proposition~11]{LickorishMillett}, but I have been unable to find a corresponding result for the multivariate polynomial in the literature. The fact that mutation invariance follows so easily from the symmetry relations of theorem~\ref{thm:INTROfourendedonecolour} suggests that $\nabla_T$ is well-suited for studying the ``local behaviour'' of the Alexander polynomial.

\subsection*{The homological tangle invariant $\HFT$}
Heegaard Floer homology theories were first defined by Ozsváth and Szabó in~2001 \cite{OSHF3mfds}. With an oriented, closed 3-dimensional manifold~$M$, they associated a family of homological invariants, the simplest of which is denoted by~$\widehat{\text{HF}}(M)$. Given an oriented (null-homologous) knot or link $L$ in~$M$, Ozsváth and Szabó, and independently J.\,Rasmussen, then defined filtrations on the chain complexes which give rise to the respective flavours of knot and link Floer homology \cite{OSHFK,Jake,OSHFL}, the simplest of which is denoted by~$\widehat{\text{HFL}}(L)$. The Alexander polynomial can be recovered from these groups as the graded Euler characteristic.

Given corollary~\ref{cor:IntroMutationDECAT}, it is only natural to ask for a Heegaard-Floer theoretic categorification of~$\nabla_T^s$. To this end, we define a homology theory $\HFT$ as follows: given a Heegaard diagram for a tangle $T$ (see definition~\ref{def:HDsfortangles}) along with a site $s$ of $T$, we define a finitely generated Abelian group which comes with two gradings: a relative homological $\mathbb{Z}$-grading and an Alexander grading, which is an additional relative $\mathbb{Z}$-grading for each component of the tangle:
$$\CFT(T,s)=\bigoplus_{\substack{h\in\mathbb{Z}~\leftarrow \text{homological grading}\hspace{-2.96cm}\\ a\in\mathbb{Z}^{\vert T\vert}~\leftarrow \text{Alexander  grading}\hspace{-2.7cm}}}\CFT_h(T,s,a).$$
Here, $\vert T\vert$ denotes the number of components of $T$. 
One can then define a differential on this group which preserves the Alexander grading and decreases homological grading by 1. In sections~\ref{sec:HDsForTangles} and~\ref{sec:Gradings}, we prove the following result.  
\begin{theorem}[\ref{thm:HFTiswelldefandinvariant} and \ref{thm:Eulercharagreeswithnabla}]\label{thm:IntroHFT}
	Given an oriented tangle \(T\) and a site \(s\) for \(T\), the bigraded chain homotopy type of \(\CFT(T,s)\) is an invariant of \(T\). We denote its homology by \(\HFT(T,s)\) and call it the \textbf{non-glueable Heegaard Floer homology of  the tangle \(T\) with respect to the site $s$}. Its graded Euler characteristic 
	$$
	\chi(\HFT(T,s))=\sum_{h, a} (-1)^h\operatorname{rk}(\HFT_h(T,s,a))\cdot t_{1\phantom{\vert}}^{a_{1\phantom{\vert}}}\!\!\cdots t_{\vert T\vert}^{a_{\vert T\vert}}\in\mathbb{Z}[t_1^{\pm1},\dots,t_{\vert  T\vert}^{\pm1}]
	$$
	is well-defined up to multiplication by a unit and agrees with $\nabla_T^s$ up to some factor.
\end{theorem}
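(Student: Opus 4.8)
The plan is to split the statement into its two halves: the computation of the graded Euler characteristic (theorem~\ref{thm:Eulercharagreeswithnabla}) and the invariance of the bigraded chain homotopy type (theorem~\ref{thm:HFTiswelldefandinvariant}).

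For the Euler characteristic, I would first invoke the elementary algebraic fact that, for a finitely generated bigraded chain complex whose differential preserves the Alexander multigrading and drops the homological grading by one, the graded Euler characteristic of the homology equals that of the complex itself (for each fixed Alexander multidegree the alternating sum of ranks is unchanged on passing to homology, and we then weight by $t_1^{a_1}\cdots t_{\vert T\vert}^{a_{\vert T\vert}}$ and sum). This reduces the claim to a computation at the chain level, namely $\chi(\CFT(T,s))=\sum_{\x}(-1)^{h(\x)}\,t_1^{a_1(\x)}\cdots t_{\vert T\vert}^{a_{\vert T\vert}(\x)}$, where $\x$ ranges over the generators of $\CFT(T,s)$, which are determined by the intersection points of the $\A$- and $\B$-curves of the diagram. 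The next step is to exhibit a bijection between these generators and the Kauffman states contributing to $\nabla_T^s$, and to check that under this correspondence the homological grading $h(\x)$ matches the sign exponent and the Alexander grading $a(\x)$ matches the monomial exponents appearing in Kauffman's state sum. Since the Heegaard diagram for $T$ is built directly from the tangle diagram, this bijection and the matching of gradings should be essentially forced by the construction; the term-by-term comparison is the only real content. The ambiguity ``up to a unit'' is precisely the ambiguity in the overall shifts of the relative homological and Alexander gradings, so it requires no separate argument.

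For the invariance, I would proceed in the standard Heegaard Floer manner. First I would fix an admissibility condition guaranteeing that the differential counts are finite and verify that $\partial^2=0$, so that $\CFT(T,s)$ is genuinely a complex, and that $\partial$ has the advertised bidegree. Then I would appeal to a Reidemeister-type theorem for tangle Heegaard diagrams (established in section~\ref{sec:HDsForTangles}): any two admissible Heegaard diagrams presenting the same tangle, with the same site $s$, are connected by a finite sequence of isotopies, handleslides, and (de)stabilizations. It then suffices to produce, for each such move, a bigraded chain homotopy equivalence between the associated complexes. Isotopy and stabilization invariance are routine and largely formal; the substantive case is handleslide invariance, which in the usual theory requires counting holomorphic triangles and identifying the top generator of the relevant Floer complex of the handlebody.

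The main obstacle is exactly this handleslide/triangle-map step together with the underlying analytic input (transversality, and $\partial^2=0$ via the boundary degenerations of one-dimensional moduli spaces), which is where the genuine Heegaard Floer machinery enters. A cleaner alternative, which the later sections of the paper make available, is to identify $\CFT(T,s)$ with a suitable summand of the sutured Floer complex $\SFC$ of a balanced sutured manifold associated to $(T,s)$; invariance would then be imported wholesale from Juhász's invariance theorem for $\SFH$, reducing the work to matching the tangle Heegaard diagram with a balanced sutured Heegaard diagram and checking that both gradings are respected by the identification.
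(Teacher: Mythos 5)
Your outline coincides with the paper's strategy in both halves: the Euler characteristic is computed on the Heegaard diagram assembled from a tangle diagram (example~\ref{exa:HDforonecrossing}) by matching generators with generalised Kauffman states and matching their gradings against the Alexander codes crossing by crossing, which is the second part of the proof of theorem~\ref{thm:Eulercharagreeswithnabla}; and your ``cleaner alternative'' for invariance is precisely the paper's proof of theorem~\ref{thm:HFTiswelldefandinvariant}: from $\mathcal{H}_T$ and $s$ one builds a balanced sutured Heegaard diagram for the sutured tangle complement $M_T^s$ of definition~\ref{def:sutured3mfdForTangles} and imports Juhász's invariance theorem for $\SFH$. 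Two details you gloss over: for tangles with closed components the correspondence with Kauffman states is not a bijection --- each closed component forces a ``ladybug'' piece (figure~\ref{fig:ladybug}) that doubles the generators and produces the factor $\prod(c-c^{-1})$ appearing in the precise statement of theorem~\ref{thm:Eulercharagreeswithnabla} --- and one needs admissibility together with the fact that admissible diagrams are connected by moves through admissible diagrams (lemma~\ref{lem:HeegaardMovesAdmissibility}).

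The genuine gap is in your logical ordering. You propose to deduce well-definedness of $\chi$ from bigraded chain homotopy invariance, and to obtain that invariance from the $\SFH$ identification. But Juhász's theorem does not deliver the bigraded statement: sutured Floer homology carries only a relative $\mathbb{Z}/d$ homological grading within each $\Spinc$-structure (i.e.\ each Alexander grading), and the identification with $\SFH(M_T^s)$ is made for each site $s$ separately, so one obtains neither a uniform relative $\mathbb{Z}$-grading across Alexander gradings nor gradings comparable across sites --- this is exactly why the paper itself calls its argument a sketch-proof (remark~\ref{rem:drawbacksOfShortcut}). The paper escapes this by proving the first half of theorem~\ref{thm:Eulercharagreeswithnabla} \emph{independently and first}: the bigraded Euler characteristic is invariant under the Heegaard moves of lemma~\ref{lem:HeegaardMoves}, checked directly at the level of generators and gradings (cancelling pairs of generators connected by bigons, resp.\ squares, hence with equal Alexander grading and homological gradings differing by $1$), with no holomorphic input; this statement is then used to partially repair the gradings in the $\SFH$ shortcut, and in general the same grading bookkeeping must be run alongside the usual invariance proof. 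Your proposal inverts this dependence and is therefore circular as stated; to fix it you must either carry out your first route (isotopy/handleslide/stabilisation with triangle counts) while tracking both gradings, or insert the paper's direct Heegaard-move grading argument before invoking $\SFH$.
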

Actually, we define $\HFT$ for tangles within arbitrary 3-manifolds $M$ with spherical boundary, see the comment at the beginning of section~\ref{sec:HDsForTangles}. This is done using Zarev's bordered sutured Heegaard Floer theory~\cite{ZarevThesis}. However, in general, the gradings on his invariants are rather complicated. To obtain $\HFT$ with the gradings described above, we restrict ourselves to tangles inside $\mathbb{Z}$-homology 3-balls $M$. In this case, the first two parts of the theorem above still hold, whereas the final part could be viewed as a definition of $\nabla_T^s$ for tangles $T$ in $M\neq B^3$.

In \cite{Juhasz}, Juhász defined sutured Floer homology $\SFH$, a Heegaard Floer homology for balanced sutured manifolds, certain 3-manifolds with non-empty boundaries which carry some additional structures, so-called sutures (see definition~\ref{def:SuturedManifold}). 
We show in theorem~\ref{thm:HFTasSFT} that for any fixed site $s$, we can identify $\HFT(T,s)$ with the sutured Floer homology $\SFH$ of the tangle complement with a particular choice of sutures which depends on~$s$. This can be regarded as the analogue of the fact that link Floer homology $\HFL$ can be computed as the sutured Floer homology of the link complement with meridional sutures \cite[proposition~9.2]{Juhasz}. Moreover, by work of Friedl, Juhász and Rasmussen~\cite{DecatSFH}, the graded Euler characteristic of $\SFH$ coincides with sutured Turaev torsion. Thus, we obtain a geometric interpretation of sites and the invariants~$\nabla_T^s$ themselves. Note that this interpretation can also be retraced directly, without referring to any categorified invariants, see~\cite[section~I.4]{thesis}.

\subsection*{Towards $\delta$-graded mutation invariance of $\HFL$}
We know from~\cite{OSmutation} that knot and link Floer homology is, in general, not invariant under mutation. However, Baldwin and Levine conjectured the following \cite[conjecture~1.5]{BaldwinLevine}.
\begin{conjecture}\label{conj:MutInvHFL}
	Let \(L\) be a link and let \(L'\) be obtained from \(L\) by Conway mutation. Then \(\HFL(L)\) and \(\HFL(L')\) agree after collapsing the bigrading to a single \(\mathbb{Z}\)-grading, known as the \(\delta\)-grading. In short: \(\delta\)-graded link Floer homology is mutation invariant.
\end{conjecture}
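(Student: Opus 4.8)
The plan is to categorify the proof of corollary~\ref{cor:IntroMutationDECAT}. That argument deduced mutation invariance of the Alexander polynomial from exactly two ingredients: the glueing formula (proposition~\ref{prop:glueing}) and the symmetry relations of theorem~\ref{thm:INTROfourendedonecolour} for the 4-ended mutating tangle. Since theorem~\ref{thm:IntroHFT} identifies $\nabla_T^s$ as the graded Euler characteristic of $\HFT(T,s)$, the natural strategy is to promote each of these two ingredients to the level of the complexes $\CFT$ and then replay the same formal argument one categorical level up, with chain homotopy equivalences in place of equalities of Laurent polynomials.

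First I would establish a Künneth-type glueing formula for $\HFT$, categorifying proposition~\ref{prop:glueing}: cutting a link $L$ along the sphere bounding the mutating ball should express a complex quasi-isomorphic to $\CFL(L)$ as a pairing of $\CFT(R,s)$ of the mutating tangle $R$ with the $\CFT$ of its complementary tangle, summed suitably over sites $s$. The sutured interpretation from the second half of the paper---identifying $\HFT(T,s)$ with $\SFH$ of the tangle complement for sutures determined by $s$---is the right framework here, since Juhász's surface decompositions and the associated gluing maps give geometric control over how the two pieces combine. Second, I would invoke the symmetry relations of $\HFT$ for 4-ended tangles announced in the abstract, which categorify theorem~\ref{thm:INTROfourendedonecolour}: these should show that the half-rotation of $R$ induces, site by site, an isomorphism of the tangle Floer groups once the two Alexander gradings of the open strands are identified. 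The mutation of definition~\ref{def:INTROmutation} is precisely this half-rotation, and its prescribed orientation conventions match the type~1 / type~2 dichotomy.

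The crucial point is the grading. The identification of the two open-strand Alexander variables in theorem~\ref{thm:INTROfourendedonecolour} is exactly what forces the collapse to a single grading upstairs: the half-rotation need not preserve the Maslov grading $M$ and the Alexander grading $A$ separately, but if the $\HFT$ symmetry relations hold with grading shifts depending only on the difference $M-A$, then it preserves the $\delta$-grading. Feeding this $\delta$-graded isomorphism on the mutating-tangle pieces through the glueing formula would then yield a $\delta$-graded isomorphism $\HFL(L)\cong\HFL(L')$, which is precisely conjecture~\ref{conj:MutInvHFL}.

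The hard part will be twofold. Lifting the symmetry relations from identities of Laurent polynomials to chain homotopy equivalences of $\CFT$ is strictly stronger than what theorem~\ref{thm:INTROfourendedonecolour} provides: at the level of Euler characteristics the bigraded failure of mutation invariance known from~\cite{OSmutation} is invisible, so any categorified symmetry can hold \emph{at best} after collapsing the bigrading, and one must verify that the grading shifts genuinely organise into the $\delta$-grading rather than obstructing the isomorphism outright. Second, the glueing formula for $\HFT$ will almost certainly appear as a spectral sequence or a filtered quasi-isomorphism rather than a clean tensor product, so one must ensure that the $\delta$-graded isomorphism on the $R$-pieces survives the pairing and is not destroyed by higher differentials. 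Controlling these higher terms---equivalently, upgrading the decategorified argument from Euler characteristics to honest homology---is where the genuine difficulty lies, and is the reason conjecture~\ref{conj:MutInvHFL} remains open despite the clean decategorified statement of corollary~\ref{cor:IntroMutationDECAT}.
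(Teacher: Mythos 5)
The statement you are trying to prove is stated in the paper as a \emph{conjecture} (due to Baldwin--Levine), and the paper does not prove it; in fact it explains precisely why the strategy you propose falls short, so there is no paper proof to compare against and your proposal cannot be accepted as a proof. Your outline is the natural one --- categorify the two ingredients of the decategorified argument (proposition~\ref{prop:glueing} and theorem~\ref{thm:INTROfourendedonecolour}) and replay corollary~\ref{cor:IntroMutationDECAT} one level up --- and to your credit you flag both danger points. But both points are not mere technical difficulties; they are where the argument genuinely breaks, for reasons the paper makes explicit.

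First, the glueing step. The paper states that, unlike $\nabla_T^s$, the invariant $\HFT$ \emph{alone} is insufficient to state a glueing formula: the chain homotopy types of the complexes $\CFT(R,s)$, taken over all sites $s$, simply do not determine the homotopy type of the complex of the glued-up link. Juhász's surface decomposition formula only goes in the decomposing direction; reversing it (gluing) requires extra structure --- Zarev's bordered sutured modules, or the ``peculiar modules'' of the announced follow-up paper~\cite{pqMod} --- which records $A_\infty$-module data over an arc algebra, not just graded groups. Consequently, even a site-by-site $\delta$-graded isomorphism between the invariants of the mutating tangle and its rotation cannot be ``fed through'' any pairing: two tangles with isomorphic $\HFT(\cdot,s)$ for every $s$ need not yield isomorphic link Floer homologies after gluing. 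This is exactly why the conjecture remained open despite corollary~\ref{cor:IntroMutationDECAT}. Second, the symmetry step: the categorified relations the paper actually proves (theorem~\ref{thm:fourendedHFT}) are strictly weaker than the decategorified ones. They only relate $T$ to the orientation-reversed tangle $\rr(T)$, namely $\CFT(T,b)\,\dot{\cong}\,\CFT(\rr(T),d)$ and $\CFT(T,a)\,\dot{\cong}\,\CFT(\rr(T),c)$ (the latter with tensor factors), whereas the proof of corollary~\ref{cor:mutation} also uses the identities \emph{without} orientation reversal, such as $\nabla^a_T=\nabla^c_T$ and $\nabla^b_T=\nabla^d_T$ for type~1. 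Example~\ref{exa:pretzeltangle} (the $(2,-3)$-pretzel tangle) shows these fail bigraded at the categorified level --- there they hold only $\delta$-graded, and the paper proves no general $\delta$-graded version of them. So your proposal, as it stands, assumes both a glueing formula and symmetry relations that this paper shows are unavailable in the form you need; making them precise is the content of the subsequent work, not a corollary of the results here.
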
 
Following the strategy for proving mutation invariance for the polynomial invariants, we study symmetry relations for the categorified invariants of 4-ended tangles. This is were the interpretation of $\HFT$ in terms of sutured Floer homology $\SFH$ becomes very useful. Using Juhász's surface decomposition formula~\cite[proposition~8.6]{SurfaceDecomposition}, we show the following.

\begin{theorem}[\ref{thm:fourendedHFT} and \ref{rem:SymRelCAT}]\label{thm:INTROfourendedHFT}
	Let \(T\) be an oriented 4-ended tangle and let \(\rr(T)\) denote the same tangle with the opposite orientation on all strands. Then, after collapsing the Alexander gradings of the two open components of \(T\), we have
	$$
	\CFT(T,a)\cong\CFT(\rr(T),c)\quad\text{and}\quad \CFT(T,b)\cong\CFT(\rr(T),d),
	$$
	as (relatively) bigraded invariants.
\end{theorem}
As one can see, the symmetry relations for $\HFT$ are not quite as strong as those for~$\nabla_T^s$. In fact, in example~\ref{exa:pretzeltangle} we show that, in general, the stronger relations do not hold. This offers a satisfying explanation of \textit{why} bigraded link Floer homology fails to be mutation invariant, while at the same time giving further evidence towards conjecture~\ref{conj:MutInvHFL}. 

Unfortunately, the symmetry relations for $\HFT$ are not enough to prove the conjecture. This is because, unlike $\nabla_T^s$, $\HFT$ \emph{alone} is insufficient to state a glueing formula. In general, $\HFT$ can be upgraded to a glueable theory by modifying its differential. This approach is described in my PhD thesis \cite[section~II.3]{thesis} and uses more complicated arc diagrams from Zarev's bordered sutured theory~\cite{ZarevThesis}. For 4-ended tangles, one can define a slightly different glueing structure, which turns out to be very similar to Hanselman, J.\,Rasmussen and Watson's immersed curve invariant for 3-manifolds with torus boundary~\cite{HRW}. This approach is described in~\cite{pqMod}, building on~\cite[chapter~III]{thesis}. 

\subsection*{Similar work by other people. }
It is interesting to compare the ideas described in this paper to those of several other groups of people who have defined generalisations of the Alexander polynomial or its categorification via Heegaard Floer theory to tangles. 

As a classical invariant of knots and links, the Alexander polynomial can be defined and interpreted in a number of different ways, depending on one’s preferred point of view. Many of these different interpretations have been used as starting points for generalisations of the Alexander polynomial to tangles: For example, Polyak \cite{Polyak} uses skein theory to define his invariant; Bigelow \cite{Bigelow} and Kennedy \cite{Kennedy} adopt a diagrammatic approach; Sartori \cite{Sartori14} uses representation theory; Archibald \cite{Archibald}, Bigelow-Cattabriga-Florens \cite{Florens} and Damiani-Florens \cite{Damiani} work with suitable generalisations of Alexander matrices. $\nabla_T^s$ from this paper fits into this collection of invariants, as it is based on the purely combinatorial definition of the classical Alexander polynomial via Kauffman states and Alexander codes. In my thesis \cite{thesis}, I explain yet another definition of a polynomial tangle invariant, namely in terms of the maximal Abelian cover of the tangle complement. Up to normalisation, this invariant can be identified with $\nabla_T^s$, so it offers a very natural geometric interpretation of $\nabla_T^s$. I refer the interested reader to~\cite[section~I.4]{thesis}, where I also discuss how one might be able to use this point of view to relate $\nabla_T^s$ to some of the other tangle invariants mentioned above. 

In 2014, Petkova and Vértesi defined a combinatorial tangle Floer homology using grid diagrams and ideas from bordered Floer homology \cite{cHFT}. They use a more general definition of tangles, namely two-sided ones. In \cite{DecatCTFH}, they and Ellis show that the decategorification of their invariant agrees with Sartori's generalisation of the Alexander polynomials to two-sided tangles via the representation theory of $U_q(\mathfrak{gl}(1\vert 1))$ \cite{Sartori14}. Thus, Petkova and Vértesi's theory fits nicely into the Reshetikhin-Turaev framework~\cite{ReshetikhinTuraev}, making it analogous to Khovanov's tangle invariant~\cite{KhovanovTangles}. 

In 2016, Ozsváth and Szabó developed a completely algebraically defined knot homology theory, which they conjecture to be equivalent to knot Floer homology \cite{OSKauffmanStates1,OSKauffmanStates2}. Like Petkova and Vértesi, they cut up a knot diagram into elementary pieces, associate with each piece a bimodule and then tensor these bimodules together to obtain a knot invariant. Implicitly, they also define an invariant for two-sided tangles, since their proof of invariance under Reidemeister moves is entirely local. Ozsváth and Szabó's theory seems to be frightfully powerful: from a computational point of view, since they can compute their homology from diagrams with over 50 crossings; but also from a more theoretical point of view, since their theory includes the hat- as well as the more sophisticated ``$-$''-version of knot Floer homology without reference to holomorphic curves or grid diagrams. Interestingly, the generators in their theory correspond to Kauffman states like in ours. A decategorified invariant has been studied by Manion~\cite{ManionDecat} and related to the representation theory of $\mathcal{U}_q(\mathfrak{gl}(1\vert 1))$.

Finally, I want to mention some impressive work of Lambert-Cole \cite{LambertCole1,LambertCole2}, where he confirms conjecture~\ref{conj:MutInvHFL} for various families of mutant pairs.

\subsection*{Acknowledgements}
This paper grew out of an essay for the Smith-Knight \& Rayleigh-Knight Prize Competition 2015 \cite{essay}, which later formed the first two chapters of my PhD thesis~\cite{thesis}. I would therefore like to take the opportunity to thank my PhD supervisor Jake Rasmussen for his generous support. I consider myself very fortunate to have been his student.

My PhD was funded by an EPSRC scholarship covering tuition fees and a DPMMS grant for maintenance, for which I thank the then Head of Department Martin Hyland.

I thank my examiners Ivan Smith and András Juhász for many valuable comments on and corrections to my thesis. I also thank Adam Levine, Andy Manion, Ina Petkova and Vera Vértesi for helpful conversations. My special thanks go to Liam Watson for his continuing interest in my work.

Last but not least, I am very grateful to the anonymous referee for their many valuable and detailed comments on and corrections to an earlier draft of this paper. 


\section{\texorpdfstring{The polynomial tangle invariants $\nabla_T^s$}{The polynomial tangle invariants ∇}}\label{sec:basicdefinitions}

First of all, we define what we mean by a tangle. Our definition is based on Conway's notion of tangles, see for example \cite[section~2.3]{Adams}.

\begin{definition}\label{def:tangle}
A \textbf{tangle} $T$ is a smooth embedding of a disjoint union of intervals and circles into the closed 3-ball $B^3$,
$$T\co\left(\coprod I \amalg \coprod S^1,\partial\right)\hookrightarrow \left(B^3,{\red S^1}\subset \partial B^3\right),$$
such that the endpoints of the intervals lie on a fixed smoothly embedded circle ${\red S^1}$ on the boundary of~$B^3$, together with a labelling of the arcs ${\red S^1}\smallsetminus \im(T)$ by some index set $\{a,b,c,\dots\}$.
We consider tangles up to ambient isotopy of $T\cup{\red S^1}$ which keeps track of the labelling of the arcs. If the number of intervals is $n$, we call a tangle \textbf{$\boldsymbol{2n}$-ended}. The images of the intervals are called \textbf{open components}, the images of the circles are called \textbf{closed components}. An \textbf{oriented} tangle is a tangle with a choice of orientation on the tangle components. 

Throughout this paper, we will often implicitly fix an ordering of the tangle components and label them by variables $t_1,t_2,\dots$, which we call the \textbf{colours} of~$T$. In a few cases, where no ordering is needed, we will also use the variables $t$, $p$ and $q$ as colours. Unless explicitly stated otherwise, we will assume the colours of different components are distinct.

In analogy to link diagrams, we define a \textbf{tangle diagram} to be a smooth embedding $D$ of a graph whose vertices are either 1- or 4-valent into the closed 2-disc $D^2$ such that the preimage of $\partial D^2$ is exactly the set of 1-valent vertices, together with under/over information at the image of each 4-valent vertex, called a \textbf{crossing}, and a labelling of the arcs $\partial D^2\smallsetminus\im(D)$ by some index set $\{a,b,c,\dots\}$. Just as in the case of links, we consider tangle diagrams up to ambient isotopy (preserving the arc labelling) and the usual Reidemeister moves, see for example~\cite{Lickorish}. Connected components of the complement of the image of $D$ are called \textbf{regions}. Those regions that meet $\partial D^2$ are called \textbf{open}, the others are called \textbf{closed}. We call a diagram \textbf{connected} if the intersection of each open region with $\partial D^2$ is connected.
\end{definition}

\begin{Remark}\label{rem:RMmovesconnectdiagrams}
Regard $D^2$ as the intersection of $B^3$ with the plane $\{z=0\}$. Given a tangle diagram, we can obtain a tangle by pushing the two components at the image of each 4-valent vertex into $\{z>0\}$ and $\{z<0\}$, according to the under/over information. Conversely, given a tangle $T$, we can choose an embedded disc $D^2$ bounding the fixed circle ${\red S^1}$. Then, just as in the case of links, a generic projection of $B^3$ onto this disc gives rise to a well-defined tangle diagram, and any two of these are connected by a sequence of Reidemeister moves. 
\begin{figure}[b]
	\centering
	\psset{unit=0.5}
	\begin{subfigure}[b]{0.18\textwidth}\centering
		\begin{pspicture}(-2,-2)(2,2)
		\rput(0,-0.9){
			\pscircle[linestyle=dotted](0,0.9){2}
			\psecurve(1.2,-1.9)(0.9,-0.9)(0.9,2.7)(1.2,3.7)
			\psecurve(-1.2,-1.9)(-0.9,-0.9)(-0.9,2.7)(-1.2,3.7)
		}
		\rput(1.45;180){$a$}
		\rput(1.45;-90){$b$}
		\rput(1.45;360){$c$}
		\rput(1.45;450){$d$}
		\end{pspicture}
		\caption{}
		\label{figsub:rattangle0a}
	\end{subfigure}
	~
	\begin{subfigure}[b]{0.18\textwidth}\centering
		\begin{pspicture}(-2,-2)(2,2)
		\rput(0,-0.9){
			\pscircle[linestyle=dotted](0,0.9){2}
			\psline(-0.9,-0.9)(0.9,0.9)(-0.9,2.7)
			\pscircle*[linecolor=white](0,1.8){0.3}
			\pscircle*[linecolor=white](0,0){0.3}
			\psline(0.9,-0.9)(-0.9,0.9)(0.9,2.7)
		}
		\rput(1.45;180){$a$}
		\rput(1.5;-90){$b$}
		\rput(1.45;360){$c$}
		\rput(1.5;450){$d$}
		\end{pspicture}
		\caption{}
		\label{figsub:rattangle0b}
	\end{subfigure}
	~
	\begin{subfigure}[b]{0.18\textwidth}\centering
		\begin{pspicture}(-2,-2)(2,2)
		\rput(0,-0.9){
			\pscircle[linestyle=dotted](0,0.9){2}
			
			\psline(0.9,0.9)(-0.9,2.7)
			\pscircle*[linecolor=white](0,1.8){0.3}
			\psline(0.9,-0.9)(-0.9,0.9)(0.9,2.7)
			\pscircle*[linecolor=white](0,0){0.3}
			\psline(-0.9,-0.9)(0.9,0.9)(0.5,1.3)
		}
		\rput(1.45;180){$a$}
		\rput(1.5;-90){$b$}
		\rput(1.45;360){$c$}
		\rput(1.5;450){$d$}
		\end{pspicture}
		\caption{}
		\label{figsub:rattangle2}
	\end{subfigure}
	~
	\begin{subfigure}[b]{0.18\textwidth}\centering
		\begin{pspicture}(-2,-2)(2,2)
		\rput(-0.8,-0.65){
			\pscircle[linestyle=dotted](0.8,0.65){2}
			\psline(0.35,0.35)(0.65,0.65)(-0.65,1.95)
			\pscircle*[linecolor=white](0,1.3){0.3}
			\psline(2.65,1.35)(0.65,-0.65)(-0.65,0.65)(0.65,1.95)
			\pscircle*[linecolor=white](0,0){0.3}
			\psline(-0.65,-0.65)(0.65,0.65)
			\pscircle*[linecolor=white](1.95,0.65){0.3}
			\psline(0.35,1.65)(0.65,1.95)(2.65,-0.05)
		}
		\rput(1.55;200){$a$}
		\rput(1.45;-70){$b$}
		\rput(1.6;360){$c$}
		\rput(1.45;430){$d$}
		\end{pspicture}
		\caption{}\label{figsub:rattangle21}
	\end{subfigure}
	~
	\begin{subfigure}[b]{0.18\textwidth}\centering
		\begin{pspicture}(-2,-2)(2,2)
		\psrotate(0,0){90}{\rput(0,-0.9){
				\pscircle[linestyle=dotted](0,0.9){2}
				\psecurve(1.2,-1.9)(0.9,-0.9)(0.9,2.7)(1.2,3.7)
				\psecurve(-1.2,-1.9)(-0.9,-0.9)(-0.9,2.7)(-1.2,3.7)
			}}
			\rput(1.45;180){$a$}
			\rput(1.45;-90){$b$}
			\rput(1.45;360){$c$}
			\rput(1.45;450){$d$}
			\end{pspicture}
			\caption{}
			\label{figsub:rattangle0e}
		\end{subfigure}
		\caption{Some diagrams of rational tangles. (a) and (b) represent the same tangle, but only (b) is a connected diagram. (c) and (d) show some more complicated rational tangles. (e) does not represent the same tangle as (a), since the labelling is different.}\label{fig:rattangles}
	\end{figure}
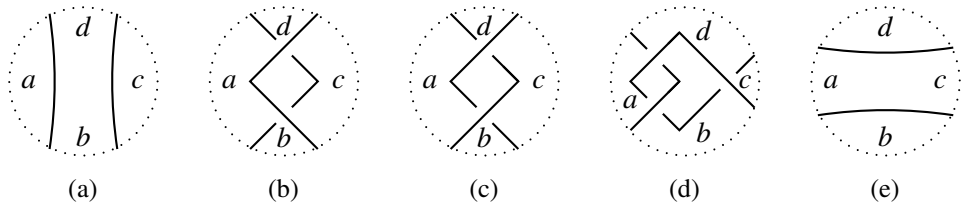
\end{Remark} 

\begin{definition}
\textbf{Rational tangles} are 4-ended tangles without any closed components obtained from the 4-ended tangle in figure \ref{figsub:rattangle0a} by repeatedly adding twists to the top and to the right.
\end{definition}

\subsection*{Alexander polynomials of knots and links. }
Next, let us recall how the Alexander polynomial of knots and links can be computed using Kauffman states and Alexander codes, following~\cite{Kauffman}. Given a diagram of a 2-ended tangle (whose closure represents a knot or link), a Kauffman state is an assignment of a marker~$\bullet$ to one of the four regions at each crossing such that each closed region is occupied by exactly one marker. 
One then applies the Alexander codes to the Kauffman states, ie one labels the markers by the monomials specified by the Alexander codes, as shown in figure~\ref{Kauffknot}. To get the multivariate Alexander polynomial, one just multiplies these labels, takes the sum over all Kauffman states and finally multiplies everything by some normalisation factor.

When trying to apply this well-known algorithm to the general case of a $2n$-ended tangle, one encounters the following problem: Say, there are $m$ crossings in the diagram. Then by an Euler characteristic argument, the diagram consists of at least $(m+n+1)$ regions, so there are at least $(n+1)$ regions more than there are markers. Moreover, we have exactly $(m+n+1)$ regions iff all regions are simply connected, so in this case, the difference between the number of regions and crossings is exactly $(n+1)$. This motivates the following definition.

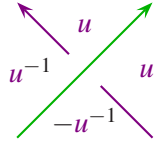
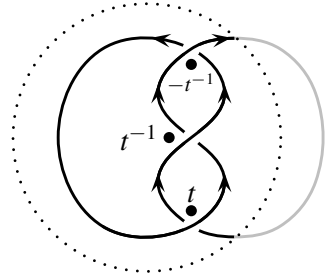
\begin{figure}[t]
	\centering
	\begin{subfigure}[b]{0.45\textwidth}\centering
		\psset{unit=1}
		\begin{pspicture}(-1.3,-1.3)(1.3,1.3)
		\psline[linecolor=violet]{->}(0.9,-0.9)(-0.9,0.9)
		\pscircle*[linecolor=white](0,0){0.3}
		\psline[linecolor=darkgreen]{->}(-0.9,-0.9)(0.9,0.9)
		\uput{0.5}[90](0,0){$\textcolor{violet}{u}$}
		\uput{0.4}[180](0,0){$\textcolor{violet}{u}^{-1}$}
		\uput{0.5}[-90](0,0){$-\textcolor{violet}{u}^{-1}$}
		\uput{0.7}[0](0,0){$\textcolor{violet}{u}$}
		\end{pspicture}
		\caption{The Alexander code from \cite[figure~33]{Kauffman}  for a positive crossing. There is also a similar one for a negative crossing. $\textcolor{violet}{u}$ is the colour of the under-strand.}
		\label{AlexCode}
	\end{subfigure}
	\quad
	\begin{subfigure}[b]{0.45\textwidth}\centering
		\psset{unit=0.59, linewidth=1.1pt}
		\begin{pspicture}[showgrid=false](-4.2,-3.1)(3.2,3.1)
		\psecurve(-2,2)(0,2)(0.75,1)(-0.75,-1)(0,-2)(0.97,-2.24)(2,-2)
		\psecurve(2,2)(0.97,2.24)(0,2)(-0.75,1)(0.75,-1)(0,-2)(-2,-2)(-3,0)(-2,2)(0,2)(0.75,1)
		
		\psecurve[linecolor=lightgray](0,-2)(0.97,-2.24)(2,-2)(2.8,-1)(2.8,1)(2,2)(0.97,2.24)(0,2)
		
		\pscircle*[linecolor=white](0,2){0.2}
		\pscircle*[linecolor=white](0,0){0.2}
		\pscircle*[linecolor=white](0,-2){0.2}
		
		\psecurve(0,2)(0.75,1)(-0.75,-1)(0,-2)
		\psecurve(2,2)(0.97,2.24)(0,2)(-0.75,1)(0.75,-1)
		\psecurve(-0.75,1)(0.75,-1)(0,-2)(-2,-2)(-3,0)
		
		\psline{->}(0.75,1.1)(0.75,1.2)
		\psline{->}(-0.75,1.11)(-0.75,1.2)
		\psline{->}(-0.75,-0.9)(-0.75,-0.8)
		\psline{->}(0.75,-0.9)(0.75,-0.8)
		
		\pscircle[linestyle=dotted](-1,0){3.05}
		
		\psline{->}(0.8,2.235)(0.9,2.25)
		\psline{->}(-0.8,2.235)(-0.9,2.25)

		\psdot(0,1.65)
		\psdot(-0.5,0)
		\psdot(0,-1.65)
		
		\uput{0.2}[-90](0,1.65){\scriptsize $-t^{-1}$}
		\uput{0.2}[180](-0.5,0){$t^{-1}$}
		\uput{0.2}[90](0,-1.65){$t$}
		
		\end{pspicture}
		\caption{A labelled Kauffman state for a 2-ended tangle, coloured by~$t$. The closure is indicated by the grey arc.}
		\label{Kauffknot}
	\end{subfigure}
	\caption{Applying Alexander codes to Kauffman states of knots and links}
\end{figure}

\begin{definition}\label{def:basic} 
Let $D$ be a diagram of an oriented $2n$-ended tangle $T$.

\begin{itemize}
	\item A \textbf{site} $s$ of $T$, or $D$, is a choice of an $(n-1)$-element subset of the set of arcs ${\red S^1}\smallsetminus \im(T)$, or equivalently $\partial D^2\smallsetminus \im(D)$. For connected tangle diagrams, this is equivalent to choosing $(n-1)$ open regions. The set of all sites of a tangle $T$ is denoted by $\mathbb{S}(T)$.
	\item A \textbf{Kauffman state} of $D$ is an assignment of a marker to one of the four regions at each crossing such that each closed region is occupied by exactly one marker, with the additional condition that there be at most one marker in each open region. Note that for a diagram without any crossings, the empty assignment is also a Kauffman state, provided that there are no closed regions. Let us denote the set of all Kauffman states of $D$ by $\mathbb{K}(D)$.
	\item Given a Kauffman state $x\in\mathbb{K}(D)$, we can construct a site $s\in\mathbb{S}(T)$ from the set of those arcs of $\partial D^2\smallsetminus \im(D)$ which lie in open regions occupied by markers of $x$ by adding for each unoccupied open region of $D$ all but one arc which lies in that region. This is indeed an $(n-1)$ element subset, since the number of unoccupied (and therefore open) regions is exactly $(n+1)$. We say that a Kauffman state $x\in\mathbb{K}(D)$ and a site $s$ obtained in this way from $x$ belong to each other. We write $\mathbb{K}(D,s)$ for the set of all Kauffman states belonging to $s$. 
	In particular, for connected diagrams $D$, any Kauffman state $x$ belongs to exactly one site, since any open region only contains a single arc.   
	\item For $x\in\mathbb{K}(D)$, let $c(x)$ be the product of the labels of the markers of $x$ according to the Alexander codes in figure~\ref{figAlexCodesForNabla}, with the convention that the empty product is equal to 1. Then for each site $s\in\mathbb{S}(T)$, let $$\hat{\nabla}_D^s:=\sum_{x\in\mathbb{K}(D,s)}c(x).$$
	Furthermore, let $\nabla_D^s$ denote the function $\hat{\nabla}_D^s$ evaluated at $h=-1$. 
\end{itemize}
\end{definition}

\begin{figure}[t]
	\centering
	\psset{unit=1.5}
	\begin{subfigure}[b]{0.3\textwidth}\centering
		\begin{pspicture}(-1.05,-1.05)(1.05,1.05)
		\psline[linecolor=violet]{->}(0.9,-0.9)(-0.9,0.9)
		\pscircle*[linecolor=white](0,0){0.3}
		\psline[linecolor=darkgreen]{->}(-0.9,-0.9)(0.9,0.9)
		\uput{0.7}[90](0,0){$\textcolor{darkgreen}{o}^{\frac{1}{2}} \textcolor{violet}{u}^{\frac{1}{2}}$}
		\uput{0.3}[180](0,0){$\textcolor{darkgreen}{o}^{\frac{1}{2}} \textcolor{violet}{u}^{-\frac{1}{2}}$}
		\uput{0.7}[-90](0,0){$h^{-1}\textcolor{darkgreen}{o}^{-\frac{1}{2}} \textcolor{violet}{u}^{-\frac{1}{2}}$}
		\uput{0.3}[0](0,0){$\textcolor{darkgreen}{o}^{-\frac{1}{2}} \textcolor{violet}{u}^{\frac{1}{2}}$}
		\end{pspicture}
		\caption{A positive crossing}
	\end{subfigure}
	\begin{subfigure}[b]{0.3\textwidth}\centering
		\begin{pspicture}(-1.05,-1.05)(1.05,1.05)
		\psline[linecolor=violet]{->}(-0.9,-0.9)(0.9,0.9)
		\pscircle*[linecolor=white](0,0){0.3}
		
		\psline[linecolor=darkgreen]{->}(0.9,-0.9)(-0.9,0.9)
		\uput{0.7}[90](0,0){$\textcolor{darkgreen}{o}^{-\frac{1}{2}} \textcolor{violet}{u}^{-\frac{1}{2}}$}
		\uput{0.3}[180](0,0){$\textcolor{darkgreen}{o}^{\frac{1}{2}} \textcolor{violet}{u}^{-\frac{1}{2}}$}
		\uput{0.7}[-90](0,0){$h\textcolor{darkgreen}{o}^{\frac{1}{2}} \textcolor{violet}{u}^{\frac{1}{2}}$}
		\uput{0.3}[0](0,0){$\textcolor{darkgreen}{o}^{-\frac{1}{2}} \textcolor{violet}{u}^{\frac{1}{2}}$}
		\end{pspicture}
		\caption{A negative crossing}
	\end{subfigure}
	\caption{The Alexander codes for definition \ref{def:basic}. The variable $\textcolor{darkgreen}{o}$ is the colour of the over-strand and $\textcolor{violet}{u}$ the colour of the under-strand.}\label{figAlexCodesForNabla}
\end{figure}
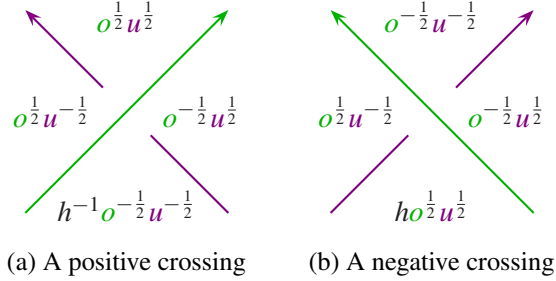 

\begin{example}
	The above definition is illustrated in figure~\ref{fig:Kaufftangle}, which shows a 4-ended tangle with a set of markers defining a Kauffman state belonging to site $c$. The markers are labelled according to the Alexander codes in figure~\ref{figAlexCodesForNabla} with $h=-1$.
\end{example}

\begin{wrapfigure}{r}{0.4\textwidth}
\centering
		\psset{unit=0.8, linewidth=1.1pt}
		\begin{pspicture}[showgrid=false](-4.1,-3.1)(2.1,3.1)
		
		\psecurve[linecolor=darkgreen](-2.5,-1.5)(0,-2)(0.75,-1)(-0.75,1)(0,2)(0.97,2.24)
		\psecurve[linecolor=darkgreen]{<-}(2,-2)(0.97,-2.24)(0,-2)(-0.75,-1)(0.75,1)(0,2)(-2.5,1.5)(-3.25,0)(-2.5,-1.5)(0,-2)(0.75,-1)
		
		\psecurve[linecolor=violet]{->}(-6,-1.5)(-3.3,-1.85)(-2.5,-1.5)(-1.85,0)(-2.5,1.5)(-3.3,1.85)(-6,1.5)
		\psline[linecolor=violet]{->}(-1.85,0.05)(-1.85,0.15)
		\pscircle*[linecolor=white](-2.5,1.5){0.2}
		\psecurve[linecolor=darkgreen](0.75,1)(0,2)(-2.5,1.5)(-3.25,0)(-2.5,-1.5)
		\psecurve[linecolor=darkgreen](0.75,-1)(0,-2)(-2.5,-1.5)(-3.25,0)(-2.5,1.5)
		
		\pscircle*[linecolor=white](0,2){0.2}
		\pscircle*[linecolor=white](0,0){0.2}
		\pscircle*[linecolor=white](0,-2){0.2}
		
		\psecurve[linecolor=darkgreen](0.75,-1)(-0.75,1)(0,2)(0.97,2.24)(2,2)
		\psecurve[linecolor=darkgreen](0,-2)(-0.75,-1)(0.75,1)(0,2)
		\psecurve[linecolor=darkgreen](-2.5,1.5)(-3.25,0)(-2.5,-1.5)(0,-2)(0.75,-1)(-0.75,1)
		
		\pscircle*[linecolor=white](-2.5,-1.5){0.2}
		\psecurve[linecolor=violet](-2.5,1.5)(-1.85,0)(-2.5,-1.5)(-3.3,-1.85)(-6,-1.5)

		\pscircle[linestyle=dotted](-1,0){3.05}
		
		
		
		\psline[linecolor=darkgreen]{->}(0.75,-1.1)(0.75,-1.2)
		\psline[linecolor=darkgreen]{->}(-0.75,-1.1)(-0.75,-1.2)
		\psline[linecolor=darkgreen]{->}(-0.75,0.9)(-0.75,0.8)
		\psline[linecolor=darkgreen]{->}(0.75,0.9)(0.75,0.8)

		\psline[linecolor=darkgreen]{->}(-3.25,0.05)(-3.25,0.15)
		
		\psline[linecolor=darkgreen]{<-}(-1.5,-2.015)(-1.4,-2.04)
		\psline[linecolor=darkgreen]{->}(-1.5,2.015)(-1.4,2.05)
		
		\uput{0.2}[45](0.97,2.24){$\textcolor{darkgreen}{p}$}
		\uput{0.2}[-45](0.97,-2.24){$\textcolor{darkgreen}{p}$}
		\uput{0.2}[135](-3.3,1.85){$\textcolor{violet}{q}$}
		\uput{0.2}[-135](-3.3,-1.85){$\textcolor{violet}{q}$}
		
		\psdot(0,1.65)
		\psdot(0.5,0)
		\psdot(0,-1.65)
		
		\psdot(-2.5,-1)
		\psdot(-2,1.4)
		
		\uput{0.2}[90](0,-1.65){$-\textcolor{darkgreen}{p}^{-1}$}
		\uput{0.2}[-90](0,1.65){$\textcolor{darkgreen}{p}$}
		\uput{0.2}[0](0.5,0){$1$}
		\uput{0.25}[88](-2.5,-1){$\textcolor{darkgreen}{p}^{\frac{1}{2}}\textcolor{violet}{q}^{\frac{1}{2}}$}
		\uput{0.15}[30](-2,1.4){$\textcolor{darkgreen}{p}^{-\frac{1}{2}}\textcolor{violet}{q}^{\frac{1}{2}}$}
		
		\uput{2.5}[180](-1,0){$a$}
		\uput{2.5}[-90](-1,0){$b$}
		\uput{2.5}[0](-1,0){$c$}
		\uput{2.5}[90](-1,0){$d$}
		\end{pspicture}
		\caption{A Kauffman state of a 4-ended tangle}\label{fig:Kaufftangle}
\end{wrapfigure}
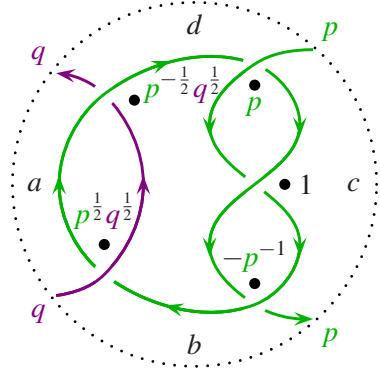
\myfixwrapfig

\begin{Remark}
The variable $h$ stands for ``homological grading''. In sections~\ref{sec:HDsForTangles} and~\ref{sec:Gradings}, we will generalise the hat version of knot and link Floer homology to tangles. The generators of these homology groups will correspond to the generalised Kauffman states above. Also, this perspective offers a more geometric interpretation of sites, see in particular definition~\ref{def:sutured3mfdForTangles}.
\end{Remark}

\begin{observation}\label{ObsAlexanderCode}
	In the Alexander codes of figure~\ref{figAlexCodesForNabla}, the exponents of $u$ in the two regions left of an under-strand are $-\tfrac{1}{2}$, and $+\tfrac{1}{2}$ in the regions on its right. For over-strands, it is the other way round. This Alexander code has the advantage over the one in figure \ref{AlexCode} that we do not need to multiply $\nabla^s_T$ by a normalisation factor to turn it into a tangle invariant.
\end{observation}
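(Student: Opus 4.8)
The statement has two ingredients: an explicit description of the Alexander code of figure~\ref{figAlexCodesForNabla}, and the assertion that, in contrast to Kauffman's code of figure~\ref{AlexCode}, this code produces $\nabla_T^s$ without an extra normalisation factor. The description I would read directly off figure~\ref{figAlexCodesForNabla}. The only thing that needs checking is that the two phrasings of the rule are mutually consistent: at a positive and at a negative crossing one verifies, region by region, that assigning $u^{-1/2}$ to the left of the under-strand and $u^{+1/2}$ to its right agrees with the reversed rule read off the over-strand. Since a crossing has only four incident regions and there are only two crossing signs, this is a finite, purely local verification.

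The content is the second ingredient, and my plan is to compare the two codes crossing by crossing. Reading figure~\ref{AlexCode} against figure~\ref{figAlexCodesForNabla}, one sees that Kauffman uses integer powers $u^{\pm1}$, so his state sum $\sum_x c(x)$ returns the Alexander polynomial only after multiplication by an explicit normalisation factor that recentres it onto the symmetric Conway representative. The centred exponents $\pm\tfrac12$ build this recentring into the local labels: being symmetric about~$0$, they make the state sum manifestly compatible with the defining $u \mapsto u^{-1}$ symmetry of the Conway potential. I would then invoke Theorem~\ref{thm:nablaisaninvariant} for the invariance of the resulting $\nabla_T^s$ and Theorem~\ref{thm:twoended} to identify $\nabla_T^\emptyset$ with the Conway potential $\nabla_L$ in the two-ended case, confirming that no separate normalisation step intervenes.

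The main obstacle is to make the elimination of the normalisation factor rigorous across the Reidemeister moves, which is exactly the content of Theorem~\ref{thm:nablaisaninvariant}. Reidemeister~II and~III create and destroy crossings and regions in matching groups, whose half-integer labels I expect to cancel cleanly; the delicate case is Reidemeister~I, the kink that with Kauffman's integer labels forces the stray power of $u$ behind his normalisation. I would track the single marker forced onto the new monogon region, together with the relabelling of the region it splits off, and verify that with the centred code the local change is only a unit of the kind already allowed by the statement, rather than the writhe- or rotation-dependent monomial one is forced to divide out in Kauffman's setting. Performing this sign-and-half-integer bookkeeping uniformly over both crossing signs, while respecting the open-region and site constraints that distinguish the tangle case, is where the real care is needed.
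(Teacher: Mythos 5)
Your proposal is correct and follows essentially the paper's own route: the exponent pattern is read off figure~\ref{figAlexCodesForNabla} by direct inspection, and the normalisation claim is delegated to theorem~\ref{thm:nablaisaninvariant}, whose proof consists of exactly the local Reidemeister checks you describe — RM~I being the place where Kauffman's integer code of figure~\ref{AlexCode} forces a writhe-dependent correction, while here the kink marker sits in a region labelled $o^{\frac{1}{2}}u^{-\frac{1}{2}}$ with $o=u$, hence equal to $1$ (and RM~II cancelling via the $h=-1$ evaluation, RM~III by the computer check). One point of precision: in your RM~I bookkeeping you must verify that the kink contribution is exactly $1$, not merely ``a unit'' — a nontrivial unit (i.e.\ a monomial $\pm u^{k/2}$) is precisely what would force a normalisation factor, so nothing short of the trivial label establishes the claim.
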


\begin{observation}
	If a diagram $D$ contains a region that is not simply connected, ie the corresponding tangle $T$ contains a split component, $\nabla_D^s$ vanishes for all sites $s$, as there are no Kauffman states of $D$. This corresponds to the fact that the Alexander polynomial of a link with a split component also vanishes.
\end{observation}

\begin{Remark}
	Let $D$ be an oriented tangle diagram and $S$ a smoothly embedded circle in the interior of the disc $D^2$ which intersects $\im(D)$ transversely away from any of the crossings. $S$~divides $D^2$ into a smaller disc and an annulus. Then the restriction of $D$ to the smaller disc defines a diagram $D'$ of some tangle. Likewise, the restriction of $D$ to the annulus can be regarded as a diagram $A$ of some ``annular'' version of a tangle. We may define the notion of a site of $A$ as a subset of arcs on both boundary components of the annulus of size equal to half the total number of arcs.
	Furthermore, regarding regions of~$A$ meeting at least one of the two boundary components of the annulus as open regions, we may define Kauffman states of~$A$, sites belonging to Kauffman states, etc. just as in definition~\ref{def:basic}. 
\end{Remark}
	
\begin{proposition}[Annular glueing/tangle replacement formula]\label{prop:annularglueing}
	With the notation from the previous remark, let us write \(\mathbb{I}\) for the set of arcs \(S\smallsetminus \im(D)\). Then, for any site \(s\) of \(D\),
	$$\hat{\nabla}_D^s=\sum_{s'\in\mathbb{S}(D')} \sum_{x^A\in\mathbb{K}(A,s\cup(\mathbb{I}\smallsetminus s'))} c(x^A) \cdot \hat{\nabla}_{D'}^{s'}.$$
	In particular, if \(D''\) is a diagram with \(\hat{\nabla}_{D''}^{s'}=\hat{\nabla}_{D'}^{s'}\) for all \(s'\in\mathbb{S}(D')\), we may replace \(D'\) by \(D''\) in \(D\) without changing the value of \(\hat{\nabla}_D^s\). The same holds if we replace \(\hat{\nabla}\) throughout by \(\nabla\).
\end{proposition}

\begin{proof}
	Let us fix $s\in\mathbb{S}(D)$ and define
	\begin{align*}
	\Phi\co\coprod_{s'\in\mathbb{S}(D')}\mathbb{K}(D',s')\times\mathbb{K}(A,s\cup(\mathbb{I}\smallsetminus s'))\longrightarrow\mathbb{K}(D,s),
	\end{align*}
	by $(x', x^A)\mapsto x=x'\cup x^A$. If $\Phi$ is a well-defined 1:1-correspondence, the observation follows.
	
	If $D$, $D'$ and $A$ are connected diagrams, it is obvious that $\Phi$ sets up a well-defined bijection.
	For the general case, let us first show that $\Phi$ is well-defined. 
	Let us call any marker of $x'$ and any arc which is not in $s'$ a base of the region in $D'$ that it is contained in. Let us do the same for markers of $x^A$, arcs not in $s^A:=s\cup(\mathbb{I}\smallsetminus s')$ and regions in $A$.
	Naturally, any closed region of $D$ which is also closed in $D'$ or $A$ contains exactly one base, which is a marker of $x$.
	Furthermore, any open region of $D'$ and $A$ also contains exactly one base. For each region in $D'$, respectively $A$, let us draw an arrow from its base to any arc in $s'$, respectively $s^A$. The union of these arrows forms a graph, and the connected components of this graph correspond to the regions of $D$. Since any arc in $\mathbb{I}$ is either in $s'$ or $s^A$, but not in both, no two arrows terminate at the same vertex. So there is at most one source (ie vertex with no incoming arrows) in each component, and exactly one if the component does not contain any loops. Suppose this is the case. Any marker of $x$ is a source. So if the source of a graph component is an arc, the corresponding region of $D$ is open, unoccupied and all other arcs which lie in that region belong to $s$. In particular, any closed region contains exactly one a marker. If the source is a marker, any arcs of $\partial D^2\smallsetminus\im(D)$ are in $s$, so the arcs of any open and occupied region lie in $s$. It remains to discuss the case where a component of the graph does contain a loop. Then the  corresponding region of $D$ is not simply-connected, ie it encloses some smaller diagram. Without loss of generality, we may assume that any closed region of this smaller diagram is simply-connected. Then, by the same argument as above, such a  region is occupied by a marker. However, this is not possible by an Euler characteristic argument. 
	
	To see that $\Phi$ is in fact a bijection, we would like to define an inverse by restricting a Kauffman state $x\in\mathbb{K}(D,s)$ to $D'$ and $A$. This is indeed possible since $x$ and $s$ uniquely determine $s'$. This one can see using a similar argument to the one above, noting that the existence of a Kauffman state of $D$ belonging to $s$ implies that all regions are simply-connected. 
\end{proof}

We can show other glueing formulas in a similar way; in particular, we obtain the following generalisation of the connected sum formula for knots and links.

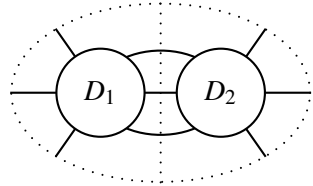
\begin{wrapfigure}{r}{0.3333\textwidth}
	\centering
	\psset{unit=0.4}
	\vspace*{-10pt}
	\begin{pspicture}(-5.1,-3.5)(5.1,3.5)
	\psellipse[linestyle=dotted](0,0)(5,3)
	
	\psline(-2,0)(2,0)
	\psline[linestyle=dotted](0,3)(0,-3)

	\rput(-2,0){
		\psline(0,0)(-3,0)
		\psline(0,0)(2.6;125)
		\psline(0,0)(2.6;-125)
	}
	
	\rput(2,0){
		\psline(0,0)(3,0)
		\psline(0,0)(2.6;55)
		\psline(0,0)(2.6;-55)
	}

	\psecurve(-2,0)(-1.5,1)(1.5,1)(2,0)
	\psecurve(-2,0)(-1.5,-1)(1.5,-1)(2,0)
	
	\pscircle[fillcolor=white,fillstyle=solid](-2,0){1.5}
	\pscircle[fillcolor=white,fillstyle=solid](2,0){1.5}
	\rput(-2,0){$D_1$}
	\rput(2,0){$D_2$}
	\end{pspicture}
	\caption{Splitting a tangle diagram into two pieces}\label{fig:SplittingTangles}
	\vspace*{25pt}
\end{wrapfigure}

\myfixwrapfig

\begin{proposition}[splitting/glueing formula]\label{prop:glueing}
	Let \(D_1\) and \(D_2\) be two oriented tangle diagrams obtained by splitting an oriented tangle diagram \(D\) along some arc that does not meet any crossings; for an illustration, see figure~\ref{fig:SplittingTangles}.
	Then
	$$\hat{\nabla}_D^s=\sum\hat{\nabla}_{D_1}^{s_1} \hat{\nabla}_{D_2}^{s_2},$$
	where the sum is over all pairs 
	\((s_1,s_2)\in\mathbb{S}(D_1)\times\mathbb{S}(D_2)\) such that \(s_1\cap s_2=\emptyset\) and the set of arcs in \(\partial D^2\smallsetminus \im(D)\) which lie in \(s_1\cup s_2\) is equal to~\(s\). \qed
\end{proposition}

\begin{theorem} \label{thm:nablaisaninvariant}
For two oriented tangle diagrams \(D_1\) and \(D_2\) representing the same tangle \(T\) and \(s\in\mathbb{S}(T)\), we have \(\nabla^s_{D_1}=\nabla^s_{D_2}\). 
\end{theorem}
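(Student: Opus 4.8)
The plan is to verify that $\nabla_T^s$ is left unchanged by each of the local moves generating the equivalence relation on tangle diagrams. By Lemma~\ref{lem:RMmovesconnectdiagrams}, any two connected diagrams of the same tangle are joined by a sequence of Reidemeister moves that stays within the class of connected diagrams; planar (ambient) isotopies of the diagram clearly leave the set $\mathbb{K}(T,s)$ and every label $c(x)$ untouched, so it suffices to treat a single Reidemeister I, II, or III move relating two connected diagrams $T_1$ and $T_2$ that differ only inside a small disc $\Delta$. Fixing the site $s$, I would analyse how the generalised Kauffman states in $\mathbb{K}(T_1,s)$ and $\mathbb{K}(T_2,s)$ that agree outside $\Delta$ correspond, and check that the total contribution $\sum_x c(x)$ is preserved.

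The core observation is that a state is determined outside $\Delta$ by a partial marker assignment, and inside $\Delta$ the defining conditions — exactly one marker per closed region, at most one marker per open region — leave only finitely many admissible completions at the crossing(s) introduced by the move. For the Reidemeister I move the kink creates one extra crossing and one extra closed region; enumerating the four possible marker positions at the new crossing, only those compatible with occupying the new region survive, and by the half-integer normalisation recorded in Observation~\ref{ObsAlexanderCode} the net surviving local factor works out to $1$, so each state of $T_1$ has a unique partner in $T_2$ with the same $c(x)$. For the Reidemeister II move two crossings and a bigon appear; here I would group the admissible completions and use the minus sign in the Alexander code (the $-u^{-1}$ entry of figure~\ref{AlexCode}, equivalently the minus in figure~\ref{figAlexCodesForNabla}) to cancel the spurious states in pairs, leaving a label-preserving bijection onto the states of $T_1$; the balanced exponents $\pm\tfrac12$ guarantee that the two new crossings contribute reciprocal factors which cancel, which is exactly why no normalisation constant is needed. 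The Reidemeister III move introduces no new regions and no cancellation: one checks directly, crossing by crossing, that the admissible marker configurations on the two sides are in label-preserving bijection.

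Throughout, the point that needs the most care — and what I expect to be the main obstacle — is the bookkeeping at the boundary. I must confirm that each correspondence preserves the site, i.e. that $s(x)$ is literally the same subset of arcs of ${\red S^1}\smallsetminus\im(T)$ before and after the move, and that the ``at most one marker per open region'' constraint holds on both sides. This is where connectivity of the diagrams is essential: by the definition of a connected diagram each open region meets $\partial D^2$ in a single arc, so ``the marker occupying a given open region'' is unambiguous and a local move can neither merge nor split the open regions recorded by $s$. I would therefore treat the moves taking place near $\partial D^2$, where one or more of the local corners of $\Delta$ is an open region, as separate sub-cases, checking that an open region entering $\Delta$ is never forced to carry a second marker and that the matching of states respects the fixed choice of occupied open regions defining $s$. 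Once these boundary sub-cases are dispatched, summing the equal contributions $c(x)$ over $\mathbb{K}(T_1,s)$ and $\mathbb{K}(T_2,s)$ yields $\nabla^s_{T_1}=\nabla^s_{T_2}$, as claimed.
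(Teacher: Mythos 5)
Your overall strategy --- reduce to Reidemeister moves via lemma~\ref{lem:RMmovesconnectdiagrams}, then check each move locally, site by site --- is exactly the paper's, and your RM~I argument (the monogon is a closed region adjacent to only one crossing, so that crossing's marker is forced there and contributes~$1$) and RM~II argument (pairwise cancellation of states via the sign, i.e.\ the $h$-factor at $h=-1$, in figure~\ref{figAlexCodesForNabla}) coincide with what the paper does, including the correct attention to connectivity and to the site being preserved.

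The gap is your treatment of RM~III. The claim that ``the admissible marker configurations on the two sides are in label-preserving bijection'' with ``no cancellation'' is false. The move preserves the number of regions but \emph{not} their adjacencies to the crossings: identifying the six outer regions of the local disc before and after the move along $\partial\Delta$ (which is how they are matched inside the global diagram), each region that is the vertical angle at a \emph{single} crossing before the move corresponds to a region abutting \emph{two} of the three crossings after the move, and vice versa. Concretely, label the strands $A,B,C$ and the crossings $c_{AB},c_{AC},c_{BC}$, and fix the outside markers so that the only free outer regions are the vertical angles at $c_{AC}$ and $c_{BC}$. Before the move there is exactly one admissible completion: the marker of $c_{AB}$ is forced into the inner triangle (a closed region), and $c_{AC}$, $c_{BC}$ occupy their vertical angles. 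After the move, the two corresponding outer regions each touch two crossings, and one counts exactly three admissible completions. So invariance under RM~III is an algebraic identity --- one monomial equals a sum of three local weights, which can only hold because two of them cancel against each other using the signs in figure~\ref{figAlexCodesForNabla} --- and any crossing-by-crossing bijection argument fails at this point. This is precisely why the paper does not argue bijectively here but instead verifies the finitely many local identities (over all orientations and all sites) by the computation in the Mathematica notebook~\cite{APT.nb} via the package~\cite{APT.m}. To close the gap you would need to carry out that finite algebraic check by hand, or adapt Kauffman's original argument~\cite{Kauffman}, rather than appeal to a state-by-state correspondence.
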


\begin{figure}[b]
	\centering
	{\psset{unit=0.2}
	\raisebox{-0.5cm}{
		\begin{pspicture}(-2,-3)(2,3)
		\psarc(-3,0){4.2426406871193}{-45}{45}
		\end{pspicture}
	}	
	$\longleftrightarrow$
	\raisebox{-0.5cm}{
		\begin{pspicture}(-3,-3)(4,3)
		\psline(-3,-3)(1,1)
		\pscircle*[linecolor=white](0,0){0.5}
		\psline(1,-1)(-3,3)
		\psarcn(2,0){1.4142135623731}{135}{-135}
		\end{pspicture}
	}
	}
	\qquad
	{\psset{unit=0.5}
	\raisebox{-1.15cm}{
		\begin{pspicture}(-1.5,-1.6)(1.5,3.5)
		
		\psecurve[linecolor=darkgreen]{<-}(-1.8,3.6)(-0.9,2.7)(-0.3,0.45)(-0.9,-0.9)(-1.8,-1.8)
		\psecurve[linecolor=violet]{<-}(1.8,3.6)(0.9,2.7)(0.3,0.45)(0.9,-0.9)(1.8,-1.8)
		
		\rput(0,1.8){
			\uput{1.35}[135](0,0){$\textcolor{darkgreen}{p}$}
			\uput{1.35}[45](0,0){$\textcolor{violet}{q}$}}
		\uput{1.35}[-135](0,0){$\textcolor{darkgreen}{p}$}
		\uput{1.35}[-45](0,0){$\textcolor{violet}{q}$}
		
		\end{pspicture}
	}
	$\longleftrightarrow$
	\raisebox{-1.15cm}{
		\begin{pspicture}(-1.5,-1.6)(1.5,3.5)
		\rput(0,1.8){
			\psline[linecolor=violet]{->}(-0.9,-0.9)(0.9,0.9)
			\pscircle*[linecolor=white](0,0){0.3}
			\psline[linecolor=darkgreen]{->}(0.9,-0.9)(-0.9,0.9)
		}
		\psline[linecolor=violet]{->}(0.9,-0.9)(-0.9,0.9)
		\pscircle*[linecolor=white](0,0){0.3}
		\psline[linecolor=darkgreen]{->}(-0.9,-0.9)(0.9,0.9)
		
		\rput(0,1.8){
			\uput{1.35}[135](0,0){$\textcolor{darkgreen}{p}$}
			\uput{1.35}[45](0,0){$\textcolor{violet}{q}$}}
		\uput{1.35}[-135](0,0){$\textcolor{darkgreen}{p}$}
		\uput{1.35}[-45](0,0){$\textcolor{violet}{q}$}
		
		\end{pspicture}
	}
}
	\caption{Reidemeister moves RM I (left) and RM II (right)}\label{fig:RMs}
\end{figure}
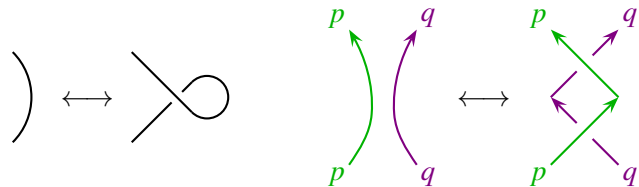

\begin{proof}
By remark~\ref{rem:RMmovesconnectdiagrams}, we just need to check that the polynomials are invariant under the Reidemeister moves RM~I--III. By proposition~\ref{prop:annularglueing}, we can check this locally, so the proof becomes exactly the same as for the usual knot and link case: We verify the theorem for the basic diagrams that appear in the Reidemeister moves, for each site separately. We only do this for RM~I and~II; for RM~III, we refer the reader to the Mathematica notebook \cite{APT.nb} which uses the package~\cite{APT.m} for calculating $\nabla_T^s$ for any connected tangle diagram $T$ and site~$s$.

Let us consider RM I first, see figure~\ref{fig:RMs}. The enclosed region on the right only has one crossing. Hence, the corresponding marker has to sit in that region in every Kauffman state. For both orientations, the labelling of this marker is 1, so we might as well remove this crossing. The same holds if we reverse the crossing; we can either check this directly, or apply proposition~\ref{prop:mirrortangle}. 

For RM~II, we only check one orientation; again, for the others, we can either check this separately or simply apply proposition~\ref{prop:reverseorientI}. In the diagram on the right, there are exactly two Kauffman states that occupy the open region on the left; they contribute $\textcolor{darkgreen}{p}$ and $h\textcolor{darkgreen}{p}$, so after setting $h=-1$, they cancel. The same is true for the open region on the right; the contribution there is $\textcolor{violet}{q}$ and $h\textcolor{violet}{q}$. Finally, for each of the open regions at the top and the bottom, there is exactly one Kauffman state and it contributes 1.
\end{proof}

\begin{definition}
	Now that we know that \(\nabla_D^s\) is a tangle invariant, we will allow ourselves to become less careful in distinguishing between tangles and their diagrams. We will use ``tangles'' and ``tangle diagrams'' synonymously, unless it is clear from the context that we do not. For example, when we talk about Kauffman states for tangles, we implicitly fix a diagram first. 
	We will from now on 
	also write \(\nabla_T^s\) for \(\nabla_D^s\) and call it the \textbf{Alexander polynomial of~$T$ at the site~$s$}. 
\end{definition} 

We have chosen the letter $\nabla$ for a reason:

\begin{theorem}\label{thm:twoended}
Let \(T\) be a diagram of an oriented $2$-ended tangle representing a link~\(L\). There is only one site of~\(T\), namely the empty set~\(\emptyset\). Let the colour of the open component be~\(c\). Then the Conway potential function \(\nabla_L\) is equal to
$$\frac{1}{c-c^{-1}}\nabla^\emptyset_T.$$
\end{theorem}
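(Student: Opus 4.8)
The plan is to reduce the statement to the classical fact that the Conway potential function $\nabla_L$ is computed by Kauffman's combinatorial algorithm from figure~\ref{AlexCode}, and then to account precisely for the discrepancy between Kauffman's original Alexander code and the symmetrised code of figure~\ref{figAlexCodesForNabla} that defines $\hat\nabla^s_T$. Since a $2$-ended tangle has $n=1$, a site is a $(n-1)=0$-element subset, so $s=\emptyset$ is indeed the only site, and $\mathbb{K}(T,\emptyset)=\mathbb{K}(T)$ consists of generalised Kauffman states with \emph{no} markers in the two open regions. Capping off the two ends with the grey closure arc (as in figure~\ref{Kauffknot}) turns $T$ into a link diagram for $L$ and identifies these states with the ordinary Kauffman states of that diagram, since the two open regions of $T$ merge into the regions adjacent to the closure arc and the no-open-marker condition matches the usual requirement that the two regions flanking a chosen edge carry no marker.

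First I would fix, for each crossing, the relationship between the monomial $l(x)$ assigned by figure~\ref{figAlexCodesForNabla} (evaluated at $h=-1$, with over-colour $o$ and under-colour $u$) and the monomial assigned by Kauffman's code in figure~\ref{AlexCode}. In a $2$-ended tangle representing a knot or link with a single colour $c$, every strand has colour $c$, so $o=u=c$ and the four labels of figure~\ref{figAlexCodesForNabla} specialise to $c^{0}=1$, $c^{-1}$, $-c^{-1}$ (after $h=-1$) and $c^{+1}$ for a positive crossing, matching the four labels $1,u^{-1},-u^{-1},u$ of figure~\ref{AlexCode} \emph{up to a single overall factor per crossing}. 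Because figure~\ref{figAlexCodesForNabla} symmetrises the exponents (shifting each by $\pm\tfrac12$ relative to Kauffman's), the product $c(x)$ over all $m$ crossings of any state differs from Kauffman's state-product by one global monomial factor in $c$ that is independent of the state $x$ — it depends only on the writhe/crossing data of the diagram, not on the marker placement. Pulling this common factor out of the sum shows that $\nabla^\emptyset_T$ equals Kauffman's unnormalised state sum times this global monomial.

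Next I would invoke Kauffman's theorem \cite{Kauffman}: his normalised state sum, after multiplying by the appropriate normalisation factor, equals the Conway potential $\nabla_L$. Comparing Kauffman's normalisation against the global monomial factor produced above, the two must combine to exactly $\tfrac{1}{c-c^{-1}}$; concretely, observation~\ref{ObsAlexanderCode} already asserts that the symmetrised code of figure~\ref{figAlexCodesForNabla} is designed so that \emph{no} normalisation factor is needed to make $\nabla^s_T$ an honest tangle invariant, whereas closing up a $2$-ended tangle into a link reintroduces precisely the factor $(c-c^{-1})$ that the Conway normalisation for a single-component-through-the-closure would otherwise supply. The cleanest route is therefore to verify the constant $\tfrac{1}{c-c^{-1}}$ on a single explicit example — say the $0$-crossing $2$-ended tangle whose closure is the unknot, or a diagram with one positive crossing — where both $\nabla^\emptyset_T$ and $\nabla_L$ can be written down by hand; since both sides are genuine invariants (the left by theorem~\ref{thm:nablaisaninvariant}, the right classically) and the ratio is a fixed monomial, matching them on one diagram pins down the universal constant $\tfrac{1}{c-c^{-1}}$.

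The main obstacle will be the careful bookkeeping of the half-integer exponent shifts and the sign conventions: I must check that the per-crossing discrepancy between figures~\ref{AlexCode} and~\ref{figAlexCodesForNabla} really is a state-\emph{independent} monomial (so it factors out of the sum), and that the closure arc identifies generalised Kauffman states with ordinary Kauffman states without over- or under-counting states near the two merged open regions. Orientation subtleties — whether the closure joins the two ends coherently, and how the positive/negative crossing codes interact with the chosen orientation — are where an off-by-a-sign or off-by-a-power-of-$c$ error could creep in, so the explicit base-case computation serves both to fix the constant and to sanity-check the conventions.
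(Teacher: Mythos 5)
There is a genuine gap at the heart of your plan, in the step you flagged as needing care. The two codes do \emph{not} differ by an overall factor per crossing. With a single colour $c$ and $h=-1$, the code of figure~\ref{figAlexCodesForNabla} assigns to the four regions (north, west, south, east) of a positive crossing the labels $(c,\,1,\,-c^{-1},\,1)$, while Kauffman's code of figure~\ref{AlexCode} assigns $(c,\,c^{-1},\,-c^{-1},\,c)$. The region-by-region ratios are $(1,\,c,\,1,\,c^{-1})$: the correction a state picks up at a crossing depends on \emph{which} region its marker occupies, so it cannot be pulled out of the state sum. Indeed the two state sums are not related by a monomial factor at all, but by a variable substitution: the code of figure~\ref{figAlexCodesForNabla} computes $\Delta_L(c^2)$ up to a unit, whereas Kauffman's computes $\Delta_L(c)$ up to a unit. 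This is visible already for the $2$-ended trefoil tangle, whose three states contribute $c^{2},\,-1,\,c^{-2}$ with the symmetrised code but (up to one overall unit) $c,\,-1,\,c^{-1}$ with Kauffman's, so the state-by-state discrepancies are $c,\,1,\,c^{-1}$ times a unit --- manifestly state-dependent. A second, related problem is your final step: even granting that for each link the ratio of the two invariants is \emph{some} monomial, checking a single example does not show it is the \emph{same} monomial for every link; that universality is precisely the assertion of the theorem (the exact normalisation of the Conway potential function, which for multivariate links is not determined by invariance up to units; cf.~\cite{hartley}), so the example-based calibration begs the question.

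The paper's proof sidesteps both issues by never comparing with Kauffman's code: since \cite{jiang14} characterises the Conway potential function by a short list of axioms, one verifies directly that $\frac{1}{c-c^{-1}}\nabla^\emptyset_T$ satisfies those axioms, the local computations being carried out in the ancillary notebook \cite{APT.nb}. If you wish to salvage your comparison strategy, you would need (i) a precise relation of the form $\nabla^\emptyset_T(c)=\pm c^{k(D)}\cdot S_D(c^{2})$, where $S_D$ is Kauffman's state sum of the closed-up diagram, with explicit control of how the sign and $k(D)$ depend on the diagram $D$, and then (ii) an independent argument fixing the normalisation --- for instance a symmetry of $\nabla^\emptyset_T$ under $c\mapsto -c^{-1}$ proved directly from the state sum. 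Beware of circularity in (ii): corollary~\ref{cor:recoverlinkcase} in section~\ref{sec:basicpropertiesofnabla} provides exactly such a symmetry, but its proof invokes theorem~\ref{thm:twoended}.
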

\begin{proof}
Verify that $\nabla:=\frac{1}{c-c^{-1}}\nabla^\emptyset_T$ satisfies the axioms in \cite{jiang14}, see \cite{APT.nb}.
\end{proof}

\begin{Remark}\label{rem:conwaypotfunctionandHFL}
Recall that the Conway potential function of an $n$-component oriented link $L$
is a rational function $\nabla_L(t_1,\dots,t_n)$
which is related to the multivariate Alexander polynomial $\Delta_L$ in the following way: (see for example \cite{hartley} or \cite{jiang14})
$$
\nabla_L(t_1,\dots,t_n)=
\begin{cases}
\displaystyle\frac{\Delta_L(t_1^2)}{t_1-t_1^{-1}}, & \text{if $n=1$};
\\
\Delta_L(t_1^2,\dots,t_n^2), & \text{if $n>1$}.
\end{cases}
$$
Hence, using the notation of the theorem above
$$
\nabla^\emptyset_T(t_1,\dots,t_n)=
\begin{cases}
\Delta_L(t_1^2), & \text{if $n=1$};
\\
(c-c^{-1}) \Delta_L(t_1^2,\dots,t_n^2), & \text{if $n>1$}.
\end{cases}
$$
We also note that $\nabla^\emptyset_T$ of a 2-ended tangle $T$, multiplied by a factor of $(t_i-t_i^{-1})$ for each \textit{closed} component of $T$, is equal to the Euler characteristic of Ozsváth and Szabó's link Floer homology from~\cite{OSHFL}.
\end{Remark}

\section{\texorpdfstring{Basic properties of $\nabla_T^s$}{Basic properties of ∇}}\label{sec:basicpropertiesofnabla}

In this section, we study some basic properties of the tangle invariants $\nabla_T^s$, guided by the properties of the Conway potential function, which were first studied by Hartley in~\cite{hartley}. Theorem~\ref{thm:CPFprops} below summarizes some of the properties proved in that article. But first, let us state the following basic result about $\nabla_T^s$ which becomes a simple observation once we have interpreted $\nabla_T^s$ in terms of Heegaard diagrams, see page~\pageref{proof:ExponentsMod2Agree} below theorem~\ref{thm:Eulercharagreeswithnabla}. Alternatively, one can also prove the result directly by generalising Kauffman's clock theorem to tangles, which is the approach chosen in~\cite{essay}.

\begin{lemma}\label{lem:ExponentsMod2Agree}
	Let \(T\) be an oriented tangle and \(s\) a site of \(T\). Then for each colour \(t_i\), the exponents of the terms in \(t_i\) differ by multiples of 2 in~\(\nabla_T^s\).
\end{lemma}

\begin{theorem}\label{thm:CPFprops}
	{\normalfont\cite[propositions 5.6, 5.5, 5.7 and 5.3]{hartley}}
	The Conway potential function of an oriented \(r\)-component link \(L\) satisfies the following properties:
	\begin{enumerate}[(i)]
		\item If \(\m(L)\) denotes the mirror image of \(L\), then \label{CPFpropmirror}
		$$\nabla_{\m(L)}(t_1,\dots,t_r)=(-1)^{r-1}\cdot\nabla_L(t_1,\dots,t_r).$$
		\item $\nabla_L(t_1,\dots,t_r)=(-1)^{r}\cdot\nabla_L(t_1^{-1},\dots,t_r^{-1}).$\label{CPFpropssymmetry}
		\item If \(\rr(L,t_1)\) is obtained from \(L\) by reversing the orientation of the first strand, then \label{CPFpropRevOneStrand}
		$$\nabla_{\rr(L,t_1)}(t_1,\dots,t_r)=-\nabla_L(t_1^{-1},t_2,\dots,t_r).
		$$
		\item If \(r>1\) and \(L_1\) is the link obtained from \(L\) by removing the $t_1$-component, then \(\nabla_L(1,t_2,\dots,t_r)\) equals \label{CPFoneequalone}\vspace{\abovedisplayskip}\\
		\textcolor{white}{\qedsymbol}\hfill$\left(t_2^{\lk(t_1,t_2)}\cdots t_r^{\lk(t_1,t_r)}-t_2^{-\lk(t_1,t_2)}\cdots t_r^{-\lk(t_1,t_r)}\right)\cdot\nabla_{L_1}(t_2,\dots,t_r).$\hfill\qedsymbol
	\end{enumerate}
\end{theorem}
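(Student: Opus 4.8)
Since the four identities are quoted verbatim from Hartley \cite{hartley} (his propositions 5.3, 5.5, 5.6 and~5.7), the honest ``proof'' is the citation, and that is what I would record first. If one instead wants a derivation inside the present combinatorial framework, the plan is to route everything through the Kauffman-state sum and, where convenient, through the tangle invariant via Theorem~\ref{thm:twoended}: present $L$ by a connected diagram, cut one strand to obtain a $2$-ended tangle $T$ whose closure is $L$, and use $\nabla_L=\frac{1}{c-c^{-1}}\nabla_T^\emptyset$ with $c$ the colour of the cut strand.

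The three symmetry relations (parts~\ref{CPFpropmirror}, \ref{CPFpropssymmetry} and~\ref{CPFpropRevOneStrand}) are all ``local'' in the sense that the operations involved --- taking the mirror image, inverting all variables, and reversing one orientation --- act crossing by crossing, so each should be readable from figure~\ref{figAlexCodesForNabla}. Concretely, comparing the positive and negative codes shows that the mirror image, which interchanges the two codes, has the effect of inverting all colours (and $h$) in every marker label; together with the inversion symmetry (part~\ref{CPFpropssymmetry}) this produces the sign $(-1)^{r-1}$ of part~\ref{CPFpropmirror}. The single-strand reversal (part~\ref{CPFpropRevOneStrand}) follows because reversing the orientation of the first strand swaps, by observation~\ref{ObsAlexanderCode}, the left/right roles of its colour at the crossings it meets, which sends $t_1\mapsto t_1^{-1}$ and contributes a single overall sign. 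These relations are not independent --- one naturally proves one of them and bootstraps the rest --- and the cleanest way to supply the one genuinely ``symmetric'' input behind part~\ref{CPFpropssymmetry} is the duality of the Reidemeister--Turaev torsion (equivalently the symmetry of the Alexander module exploited in remark~\ref{rem:conwaypotfunctionandHFL}), rather than trying to exhibit it on a single diagram. At the tangle level these three statements are the exact shadows of proposition~\ref{prop:mirrortangle} and proposition~\ref{prop:reverseorientI}, so once those are available one may alternatively specialise them to the $2$-ended case and divide by $c-c^{-1}$.

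The substantive part, and the step I expect to be the main obstacle, is the Torres-type condition (part~\ref{CPFoneequalone}). Unlike the other three it is not local: setting $t_1=1$ trivialises every label carrying the colour of $L_1$, and the task is to show that the surviving Kauffman-state sum reorganises as $\nabla_{L\smallsetminus L_1}(t_2,\dots,t_r)$ times the linking-number factor $t_2^{\lk(L_1,L_2)}\cdots t_r^{\lk(L_1,L_r)}-t_2^{-\lk(L_1,L_2)}\cdots t_r^{-\lk(L_1,L_r)}$. Tracking how the states of $L$ restrict to states of the sublink as the component $L_1$ is deleted, and pinning down exactly where the linking numbers enter, is where all the difficulty lies. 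Rather than grind through this bookkeeping by hand, I would deduce part~\ref{CPFoneequalone} from the classical Torres formula for the multivariable Alexander polynomial, converted through the normalisation recorded in remark~\ref{rem:conwaypotfunctionandHFL}.
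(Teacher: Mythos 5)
Your proposal takes the same approach as the paper: the paper gives no argument for this theorem at all --- the citation to Hartley's propositions 5.6, 5.5, 5.7 and 5.3 \emph{is} the proof, as signalled by the \qedsymbol{} embedded directly in the statement. Your supplementary sketches (deriving (i)--(iii) crossing-locally from the Alexander codes, and (iv) from the classical Torres formula via remark~\ref{rem:conwaypotfunctionandHFL}) go beyond what the paper records, but your primary answer --- that the honest proof is the citation --- is exactly what the paper does.
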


\begin{proposition}\label{prop:mirrortangle}
Let \(T\) be an oriented tangle and \(\m(T)\) its mirror image. Then for all \(s\in\mathbb{S}(T)\), 
$$\hat{\nabla}^{s}_{\m(T)}(t_1,\dots,t_r,h)=\hat{\nabla}^{s}_T(t_1^{-1},\dots ,t_r^{-1}, h^{-1}).$$
\end{proposition}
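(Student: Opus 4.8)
The plan is to use that passing to the mirror image changes nothing about the underlying diagram except the over/under information at the crossings. Concretely, I take $\m(T)$ to be the reflection of $T$ across the plane $\{z=0\}$ in which the diagram lies, so that the planar projection together with all its arrows and the labelling of the boundary arcs is literally the same as for $T$, while every positive crossing becomes negative and vice versa. Because the conditions defining a generalised Kauffman state in definition~\ref{def:basic} involve only the regions of the diagram and never the crossing signs, the valid marker assignments for $T$ and for $\m(T)$ coincide. This produces a canonical identification $\mathbb{K}(T)=\mathbb{K}(\m(T))$ that preserves $s(x)$, hence $\mathbb{K}(T,s)=\mathbb{K}(\m(T),s)$ for every $s\in\mathbb{S}(T)=\mathbb{S}(\m(T))$.

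The substance of the argument is a local computation at one crossing. Fix a crossing and compare, region by region, the monomial that the Alexander code of figure~\ref{figAlexCodesForNabla} attaches there in $T$ with the one it attaches in $\m(T)$; since the reflection fixes each of the four regions, I may compare them in place. Observation~\ref{ObsAlexanderCode} makes this transparent: for each strand the exponent of its colour is $-\tfrac12$ on the left and $+\tfrac12$ on the right when it is an under-strand, and the opposite when it is an over-strand. Mirroring turns each under-strand into an over-strand and vice versa without moving it, so the exponent of every colour in every adjacent region is negated; this is exactly the substitution $t_i\mapsto t_i^{-1}$. The only remaining datum is the factor $h^{\pm1}$, which sits in a single region (determined by the orientations, hence unmoved by the reflection) and equals $h^{-1}$ at a positive crossing and $h$ at a negative one; switching the crossing sign therefore realises $h\mapsto h^{-1}$. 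Putting these together gives, at each region $R$,
$$l_{\m(T)}(R)=l_T(R)\big\vert_{t_1\mapsto t_1^{-1},\dots,t_r\mapsto t_r^{-1},\,h\mapsto h^{-1}}.$$

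The global identity is then formal. For a fixed state $x$ the contribution $c(x)$ is the product of the region labels over all crossings, so the local identities multiply to give $c_{\m(T)}(x)=c_T(x)\vert_{t_i\mapsto t_i^{-1},\,h\mapsto h^{-1}}$, and summing over $x\in\mathbb{K}(T,s)=\mathbb{K}(\m(T),s)$ — inversion of variables commuting with this finite sum — yields the asserted equality $\hat{\nabla}^{s}_{\m(T)}(t_1,\dots,t_r,h)=\hat{\nabla}^{s}_T(t_1^{-1},\dots,t_r^{-1},h^{-1})$.

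I expect the only real friction to lie in the local step: one must fix the convention for $\m$ and then check that, under the reflection, the region carrying the $h$-factor of a positive crossing is matched with the region carrying the $h$-factor of the corresponding negative crossing, and likewise that the left/right assignments of observation~\ref{ObsAlexanderCode} line up in all four regions at once. None of this is hard once the conventions are pinned down; it is a finite verification with no analytic content, and it is precisely the computation already implicit in having written the two Alexander codes so that they differ by inverting all variables.
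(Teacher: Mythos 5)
Your proof is correct and is essentially the paper's argument: the paper's entire proof is the one-line observation that the two Alexander codes in figure~\ref{figAlexCodesForNabla} are mirror images of one another after taking reciprocals of all variables, which is exactly your local, crossing-by-crossing verification. Your write-up just makes explicit the surrounding bookkeeping (the convention for $\m$, the identification $\mathbb{K}(T,s)=\mathbb{K}(\m(T),s)$, and the multiplication over crossings) that the paper leaves implicit.
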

\begin{proof}
Observe that the two Alexander codes in figure~\ref{figAlexCodesForNabla} are mirror images of one another after taking the reciprocals of all variables. 
\end{proof}
\begin{definition}\label{def:linkingnumber}
We define the \textbf{linking number} $\lk_T(p,q)$ for two components $p$ and $q$ of an oriented tangle $T$ to be 
\begin{align*}
	\lk_T(p,q):=&\tfrac{1}{2}\#\{\text{positive crossings between $p$ and $q$}\}\\
	&-\tfrac{1}{2}\#\{\text{negative crossings between $p$ and $q$}\}.
\end{align*}
For a tangle with a component $t_j$, we also define 
$$\lk_T(t_j):=\sum\lk_T(t_i,t_j),$$ 
where the sum is over all $i\neq j$. We sometimes omit the subscript $T$ when there is no risk of ambiguity.
\end{definition}
\begin{Remark} Note that for two-component links, $\lk(p,q)$ coincides with the usual linking number. Also, linking numbers are invariants of tangles.
\end{Remark}

\begin{proposition}\label{prop:reverseorientI}
Let \(T\) be an oriented \(r\)-component tangle. If \(\rr(T,t_1)\) denotes the same tangle \(T\) with the orientation of the first strand reversed, then for all sites \(s\in\mathbb{S}(T)\), we have 
$$\hat{\nabla}^s_{\rr(T,t_1)}(t_1,\dots,t_r)=h^{\lk_T(t_1)} \hat{\nabla}^s_{T}(h^{-1}t_1^{-1},t_2,\dots,t_r).$$
\end{proposition}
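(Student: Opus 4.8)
The plan is to exploit the fact that a generalised Kauffman state is purely combinatorial data---a marker assignment subject to the conditions of definition~\ref{def:basic}---and so does not depend on the orientation of the strands. Hence $\mathbb{K}(T,s)$ and $\mathbb{K}(\rr(T,t_1),s)$ are literally the same set for every site $s$, and it suffices to compare, state by state, the product $c(x)$ computed with the two orientations. Since the sums defining $\hat{\nabla}^s_T$ and $\hat{\nabla}^s_{\rr(T,t_1)}$ range over this common index set, the proposition follows once I establish, for each fixed $x$, the identity
$$c_{\rr(T,t_1)}(x)(t_1,\dots,t_r)=h^{\lk_T(t_1)}\,c_T(x)(h^{-1}t_1^{-1},t_2,\dots,t_r),$$
because the exponent $\lk_T(t_1)$ does not depend on $x$ and therefore factors out of the sum.

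Because $c(x)$ is a product of local contributions, one per crossing, I can compare the two labellings crossing by crossing. Reversing the orientation of $t_1$ leaves every crossing not meeting $t_1$ untouched, and at such a crossing no label involves $t_1$ and no sign changes, so its contribution is affected by neither the reversal nor the substitution $t_1\mapsto h^{-1}t_1^{-1}$. This leaves two cases: crossings where $t_1$ crosses itself, and crossings where $t_1$ meets another colour. At a self-crossing, reversing $t_1$ reverses both strands, which preserves the sign and amounts to a $180^\circ$ rotation of the oriented crossing, interchanging the labels of the regions between the two heads and between the two tails while fixing the other two regions. A direct inspection of figure~\ref{figAlexCodesForNabla} shows that the substitution $t_1\mapsto h^{-1}t_1^{-1}$ induces exactly this same permutation of the four labels, so the reversed label equals the substituted label in every region and the local factor is $1$.

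For a crossing at which $t_1$ meets another colour, reversing $t_1$ flips the sign of the crossing and flips the left/right sides of the $t_1$-strand only, so that the exponent of $t_1$ changes sign while that of the other colour is unchanged, and the region carrying the power of $h$ is relocated. Working through all four positions using the exponent rule recorded in observation~\ref{ObsAlexanderCode}, one checks that in every region the reversed label equals $h^{\pm 1/2}$ times the substituted label, with the sign $+$ at a positive crossing and $-$ at a negative crossing, and---crucially---with the \emph{same} power $h^{\pm 1/2}$ at all four regions of that crossing, so that it is irrelevant where the marker of $x$ actually sits. Multiplying these local factors over all crossings, the self-crossings and the crossings away from $t_1$ contribute $1$, while each positive (resp.\ negative) crossing between $t_1$ and another strand contributes $h^{1/2}$ (resp.\ $h^{-1/2}$); the total is $h^{(P-N)/2}$, where $P$ and $N$ are the numbers of positive and negative crossings of $t_1$ with the other strands. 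By definition~\ref{def:linkingnumber} this exponent is exactly $\lk_T(t_1)$, which yields the displayed state-by-state identity and hence the proposition.

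The one genuinely delicate point is the uniformity claim in the mixed case: I must verify that the extra power of $h$ is the same for all four regions of a crossing, since otherwise it could not be extracted from the product independently of the location of the marker. This is precisely what forces the careful bookkeeping of how reversing a single strand simultaneously flips that strand's exponents, flips the crossing's sign, and moves the $h$-power to a different region; once these three effects are lined up against figure~\ref{figAlexCodesForNabla}, everything else is a routine verification, which may also be carried out using the \texttt{Mathematica} package~\cite{APT.m}.
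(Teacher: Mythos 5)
Your proof is correct and follows essentially the same route as the paper's: a crossing-by-crossing comparison in which the exponent rule of observation~\ref{ObsAlexanderCode} handles everything modulo powers of $h$, mixed crossings contribute a uniform $h^{\pm 1/2}$ per crossing, self-crossings and crossings away from $t_1$ contribute nothing, and the total $h^{(P-N)/2}$ equals $h^{\lk_T(t_1)}$ by definition~\ref{def:linkingnumber}. The only difference is one of exposition: you make explicit the uniformity of the $h$-factor across the four regions of a crossing (so the marker's position is irrelevant), which the paper's statement that the whole Alexander code gets multiplied by $h^{\mp 1/2}$ leaves implicit.
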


\begin{proof}
This is easily seen by considering crossings separately: Modulo sign, the statement follows from observation~\ref{ObsAlexanderCode}. For the correct sign, note that after substituting $h^{-1}t_1^{-1}$ for $t_1$ in the Alexander code of a positive (negative) crossing involving $t_1$ and some different colour, we obtain the Alexander code of the crossing with the orientation of the $t_1$-strand reversed multiplied by $h^{-\frac{1}{2}}$ (respectively $h^{\frac{1}{2}}$). For crossings involving only $t_1$, no additional factor is necessary, and for crossings not involving $t_1$ at all, there is nothing to show.
\end{proof}
\begin{corollary}\label{cor:alloorientsrev}
Let \(T\) be an oriented \(r\)-component tangle. If \(\rr(T)\) denotes the same tangle \(T\) with the orientation of all strands reversed, then for all sites \(s\in\mathbb{S}(T)\), we have $$\hat{\nabla}^s_{\rr(T)}(t_1,\dots,t_r)=\hat{\nabla}^s_{T}(h^{-1}t_1^{-1},\dots,h^{-1}t_r^{-1}).$$
If \(\rr(\cdot)\) denotes the function which substitutes \(-t^{-1}\) for each colour~\(t\), the above implies 
$$\nabla_{\rr(T)}^s=\rr(\nabla_T^s).$$
Moreover, for an oriented link \(L\), we then have the symmetry relation
$$\nabla_{L}=\nabla_{\rr(L)}=\rr(\nabla_{L}).$$
\end{corollary}
\begin{proof}
For the first part, we successively reverse the orientation of all strands, noting that each term $\lk_T(t_i,t_j)$ appears twice in the exponent of $h$, but with different signs, because the second time it appears, the orientation of one strand has been reversed.
The second statement follows directly from the first with $h=-1$. 
The second equality of the final statement follows from theorem~\ref{thm:twoended} and the previous statement. The first part is a combination of theorem \ref{thm:CPFprops} (ii) and (iii).
\end{proof}
\begin{lemma}[one-colour skein relation]\label{onecolourskein}
Let \(T_+\), \(T_-\) and \(T_\circ\) denote the tangles 
\!\raisebox{-5pt}{\psset{unit=0.15}\begin{pspicture}[showgrid=false](-2,-2)(2,2)
	\psrotate(0,0){45}{
		\psline{<-}(2.2,0)(-2,0)
		\psline{<-}(0,2.2)(0,0.5)
		\psline(0,-2)(0,-0.5)
	}
	\end{pspicture}}\!,  
\!\raisebox{-5pt}{\psset{unit=0.15}\begin{pspicture}[showgrid=false](-2,-2)(2,2)
	\psrotate(0,0){-45}{
		\psline{->}(2,0)(-2.2,0)
		\psline{<-}(0,2.2)(0,0.5)
		\psline(0,-2)(0,-0.5)
	}
	\end{pspicture}}\! and
\!\raisebox{-5pt}{\psset{unit=0.15}
\begin{pspicture}[showgrid=false](-2,-2)(2,2)
\psrotate(0,0){45}{
	\pscustom{
		\psline{<-}(0,2)(0,1)
		\psarcn(-1,1){1}{0}{-90}
		\psline(-1,0)(-2,0)
	}%
	\pscustom{
		\psline(0,-2)(0,-1)
		\psarcn(1,-1){1}{180}{90}
		\psline{->}(1,0)(2,0)
	}%
}
\end{pspicture}}\!
respectively. Then for all sites \(s\),
$$\nabla_{T_+}^s(t,t)-\nabla_{T_-}^s(t,t)=(t-t^{-1})\cdot \nabla_{T_\circ}^s(t,t).$$
Thus, the univariate polynomial tangle invariant \(\nabla_T^s(t,\dots,t)\) satisfies the same skein relation as the Alexander polynomial. 
\end{lemma}
\begin{proof}
Straightforward.
\end{proof}
\begin{corollary}\label{corknotpmoneequalone}
Let \(T\) be a 2-ended tangle representing a knot. Then \(\nabla_{T}^s(\pm 1)=1\).
\end{corollary}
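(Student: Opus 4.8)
The plan is to exploit the one-colour skein relation (Lemma~\ref{onecolourskein}) to show that the value of $\nabla_T^s$ at $t=\pm1$ is unchanged under crossing changes, and then to reduce to the unknot, whose value I can pin down via Theorem~\ref{thm:twoended}. First I would record the setup: since $T$ is a $2$-ended tangle whose closure is a knot, it has a single component, hence a single colour $t$, and by Theorem~\ref{thm:twoended} its only site is $s=\emptyset$. Thus $\nabla_T^s=\nabla_T^\emptyset$ is a Laurent polynomial in the single variable $t$, and the evaluation $\nabla_T^s(\pm1)$ makes sense.

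The key observation is that setting $t=\pm1$ kills the factor $(t-t^{-1})$ on the right-hand side of the skein relation. Concretely, if $T_+$ and $T_-$ differ by a single crossing change at a self-crossing of the knot, then Lemma~\ref{onecolourskein} gives $\nabla_{T_+}^\emptyset(t,t)-\nabla_{T_-}^\emptyset(t,t)=(t-t^{-1})\,\nabla_{T_\circ}^\emptyset(t,t)$, and evaluating at $t=\pm1$ yields $\nabla_{T_+}^\emptyset(\pm1)=\nabla_{T_-}^\emptyset(\pm1)$. Hence $\nabla_T^\emptyset(\pm1)$ depends only on the equivalence class of $T$ under crossing changes, which may be applied at the crossings of the diagram, all of which lie in the tangle part since the closure arc is crossingless.

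Next I would invoke the classical fact that any knot diagram can be turned into a diagram of the unknot by a sequence of crossing changes. Performing these changes inside $T$ produces a $2$-ended tangle $T_U$ with the same endpoints whose closure is the unknot, and by the previous step $\nabla_T^\emptyset(\pm1)=\nabla_{T_U}^\emptyset(\pm1)$. For the base case I would use Theorem~\ref{thm:twoended} together with the value $\nabla_{\bigcirc}(t)=(t-t^{-1})^{-1}$ of the Conway potential function of the unknot (Remark~\ref{rem:conwaypotfunctionandHFL} with $\Delta_{\bigcirc}=1$): this gives $\nabla_{T_U}^\emptyset=(t-t^{-1})\,\nabla_{\bigcirc}=1$ identically, so in particular $\nabla_{T_U}^\emptyset(\pm1)=1$. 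Combining the two equalities finishes the proof.

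The main obstacle is the bookkeeping around the reduction to the unknot: I must ensure the crossing changes can be realised within the tangle diagram, and check that the intermediate objects $T_\circ$ (which need not be knots) are irrelevant — they enter only through the coefficient $(t-t^{-1})$, which vanishes at $\pm1$, so their polynomials never matter. A cleaner but less self-contained alternative avoids the skein relation altogether: by Theorem~\ref{thm:twoended} and Remark~\ref{rem:conwaypotfunctionandHFL} one has $\nabla_T^\emptyset(t)=\Delta_L(t^2)$, so $\nabla_T^\emptyset(\pm1)=\Delta_L(1)$, and the result reduces to the classical normalisation $\Delta_K(1)=1$ for knots.
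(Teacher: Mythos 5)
Your proposal is correct and follows essentially the same route as the paper: apply the one-colour skein relation (lemma~\ref{onecolourskein}) to see that evaluation at $t=\pm1$ is unchanged by crossing changes, unknot the diagram, and use that $\nabla^\emptyset_{T'}\equiv 1$ for a tangle representing the unknot. You merely spell out the base case (via theorem~\ref{thm:twoended} and remark~\ref{rem:conwaypotfunctionandHFL}) that the paper leaves implicit, and your closing alternative via $\Delta_K(1)=1$ is a valid shortcut but not what the paper does.
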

\begin{proof}
Let $T'$ be the diagram obtained from $T$ by changing some crossings such that $T'$ represents the unknot. Then, by lemma~\ref{onecolourskein}, $\nabla_{T}^s(\pm 1)=\nabla_{T'}^s(\pm 1)$, and $\nabla_{T'}^s(t)\equiv 1$.
\end{proof}

The next proposition corresponds to part (\ref{CPFoneequalone}) of theorem~\ref{thm:CPFprops}.
\begin{proposition}\label{prop:setpmone}
Let \(T\) be an oriented tangle whose \(t_1\)-component is closed and let \(T_1\) be the tangle obtained from \(T\) by removing this component. Then for all \(s\in\mathbb{S}(T)\), \(\nabla^s_{T}(\pm 1,t_2,\dots,t_r)\) equals
$$(\pm1)^{\lk(t_1)+1}\left(t_2^{\lk(t_1,t_2)}\cdots t_r^{\lk(t_1,t_r)}-t_2^{-\lk(t_1,t_2)}\cdots t_r^{-\lk(t_1,t_r)}\right)\cdot\nabla^s_{T_1}(t_2,\dots,t_r).$$
\end{proposition}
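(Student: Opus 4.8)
The plan is to reduce the two cases $t_1=\pm1$ to the single case $t_1=+1$ and then to prove that case by simplifying the diagram and computing the resulting state sum. The first observation is that the case $t_1=-1$ follows formally from the case $t_1=+1$ together with proposition~\ref{prop:reverseorientI}. Reversing the orientation of the closed strand $K_1$ replaces each linking number $\lk(t_1,t_i)$ by its negative and leaves the complement $T\smallsetminus K_1$ (hence $\nabla^s_{T\smallsetminus K_1}$) unchanged. Setting $h=-1$ in proposition~\ref{prop:reverseorientI} gives $\nabla^s_{\rr(T,t_1)}(t_1,\dots,t_r)=(-1)^{\lk_T(t_1)}\nabla^s_T(-t_1^{-1},t_2,\dots,t_r)$, and evaluating at $t_1=1$ yields $\nabla^s_T(-1,t_2,\dots,t_r)=(-1)^{\lk_T(t_1)}\nabla^s_{\rr(T,t_1)}(1,t_2,\dots,t_r)$. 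Applying the (still to be proved) case $t_1=+1$ to $\rr(T,t_1)$, whose linking numbers are $-\lk(t_1,t_i)$, produces exactly the factor $-(\prod_i t_i^{\lk(t_1,t_i)}-\prod_i t_i^{-\lk(t_1,t_i)})\nabla^s_{T\smallsetminus K_1}$, and the surviving sign $(-1)^{\lk_T(t_1)}\cdot(-1)$ assembles into $(-1)^{\lk(t_1)+1}$, as required. So it suffices to treat $t_1=+1$.

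For the case $t_1=+1$ I would first simplify the diagram. Applying the one-colour skein relation of lemma~\ref{onecolourskein} at a self-crossing of $K_1$ (where both strands carry the colour $t_1$) gives $\nabla^s_{T_+}(t_1,\dots)-\nabla^s_{T_-}(t_1,\dots)=(t_1-t_1^{-1})\nabla^s_{T_\circ}(t_1,\dots)$, and the coefficient $(t_1-t_1^{-1})$ vanishes at $t_1=1$. Hence $\nabla^s_T(1,t_2,\dots,t_r)$ is unchanged under crossing changes at self-crossings of $K_1$. Such crossing changes affect neither the linking numbers $\lk(t_1,t_i)$ nor the complement $T\smallsetminus K_1$, so the right-hand side is unchanged as well; and since all of $\nabla^s$ is a Reidemeister invariant by theorem~\ref{thm:nablaisaninvariant}, both sides of the claimed identity are unchanged if we first unknot $K_1$ by crossing changes and then isotope it to an embedded round circle $\gamma$ meeting the remaining strands transversally.

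It then remains to evaluate the state sum on this normal-form diagram at $t_1=1$. Here the Alexander codes of figure~\ref{figAlexCodesForNabla} simplify drastically: at every crossing of $\gamma$ with a strand of colour $t_i$ the $t_1$-dependence disappears and the marker contributes only $t_i^{\pm1/2}$ (times a power of $h$), while crossings disjoint from $\gamma$ are untouched. The heart of the argument, and the step I expect to be the main obstacle, is to show that summing over the marker placements at the crossings along $\gamma$ decouples from the rest of the state sum and collapses to the single factor $(\prod_i t_i^{\lk(t_1,t_i)}-\prod_i t_i^{-\lk(t_1,t_i)})$, leaving $\nabla^s_{T\smallsetminus K_1}(t_2,\dots,t_r)$. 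Travelling once around $\gamma$, the half-integer $t_i$-exponents telescope (this is where lemma~\ref{lem:ExponentsMod2Agree} guarantees integrality of the total exponent), and one checks that after setting $h=-1$ all marker configurations cancel in pairs except two extremal ones, which produce the two monomials $\prod_i t_i^{\pm\lk(t_1,t_i)}$; the signed count of $\gamma$–$t_i$ crossings is exactly $\lk(t_1,t_i)$. Making this pairing of cancelling states precise is the only delicate point, and it is a Torres-type computation in the spirit of Kauffman's derivation of the Torres conditions. As an alternative to this direct computation, one could instead deduce the case $t_1=+1$ from the link statement of theorem~\ref{thm:CPFprops}(\ref{CPFoneequalone}) by closing $T$ up to a link and stripping off the closure with the glueing formula of proposition~\ref{prop:glueing}, an approach for which the linking numbers and the complement match on the nose.
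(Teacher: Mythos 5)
Your overall strategy is the same as the paper's: use the one-colour skein relation (lemma~\ref{onecolourskein}) to make crossing changes at $t_1$-$t_1$-crossings invisible once $t_1=\pm1$, reduce to a diagram in which $K_1$ is an embedded round circle $\gamma$ meeting the remaining strands transversally, and then collapse the state sum along $\gamma$. Your preliminary reduction of $t_1=-1$ to $t_1=+1$ via proposition~\ref{prop:reverseorientI} is correct (and is a mild variant of the paper, which carries both signs through the computation simultaneously); note that the exponent $\lk_T(t_1)$ is an integer here because $K_1$ is closed, so the sign bookkeeping goes through. The genuine gap is that the collapse of the state sum --- which you yourself call ``the main obstacle'' and ``the only delicate point'' --- is exactly where the whole content of the paper's proof lies, and you do not supply it. Concretely, two things are missing. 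First, the computation of the four site-values of a single box $T_{\pm}$ (table~\ref{tab:setpmone}), and in particular the observation that $\nabla^{r}_{T_{\pm}}$ vanishes after setting $t_1=\pm1$: this vanishing is the structural fact that makes the sum along $\gamma$ decouple from the rest, since it forces every contributing Kauffman state to place exactly one marker in the left region of some box (contributing the factor $t_{i_j}-t_{i_j}^{-1}$), with all other markers along $\gamma$ forced into top/bottom regions determined by the distinguished unoccupied region. Second, the explicit telescoping bookkeeping (the counts $\alpha^{i\pm}_j$, $\beta^{i\pm}_j$ in the paper's proof) showing that one of the two states of the $j$-th box cancels against one of the $(j+1)$-st, leaving only the two extremal monomials $\prod_i t_i^{\pm\lk(t_1,t_i)}$. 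Asserting that such a pairing exists ``in the spirit of Kauffman'' is a plan, not a proof. (Also, your appeal to lemma~\ref{lem:ExponentsMod2Agree} is misplaced: that lemma compares exponents of different monomials mod~2; integrality of the exponents here comes from each strand crossing $\gamma$ an even number of times.)

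Your proposed fallback --- deducing the case $t_1=+1$ from part~(\ref{CPFoneequalone}) of theorem~\ref{thm:CPFprops} by closing $T$ to a link and stripping off the closure with proposition~\ref{prop:glueing} --- does not work as stated, so it cannot substitute for the missing computation. To isolate a single site $s$ by closing, one caps off exactly the $n-1$ open regions in $s$; then the Kauffman states of the resulting 2-ended tangle are precisely those of $T$ occupying $s$, and theorem~\ref{thm:twoended} gives $\nabla^s_T=(c-c^{-1})\,\nabla_{L_s}$ for the closed-up link $L_s$. But the capping arcs join the open components of $T$ into fewer link components, so this identity only holds after identifying the colours of the merged strands. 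Hartley's formula applied to $L_s$ therefore yields exponents of the form $\sum_j\lk(t_1,t_j)$, summed over merged strands in identified variables, and the genuinely multivariate statement of proposition~\ref{prop:setpmone}, with a separate exponent $\lk(t_1,t_i)$ for each colour $t_i$, cannot be recovered this way; the linking numbers and variables do not ``match on the nose''.
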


\begin{proof}
	To simplify notation, let us write $\sigma=\pm1$ throughout the proof for the substituted value of~$t_1$.
	Then, first observe that by lemma~\ref{onecolourskein}, changing a $t_1$-$t_1$-crossing in $T$ to obtain a new tangle $T'$ does not change the invariant evaluated at $t_1=\sigma$, ie
	$$\nabla_{T}^s(\sigma,t_2,\dots,t_r)=\nabla_{T'}^s(\sigma,t_2,\dots,t_r).$$
	We can therefore assume without loss of generality that the $t_1$-component is the unknot and we may choose a diagram for which there are no $t_1$-$t_1$-crossings. Moreover, by fixing the $t_1$-component and then ``pulling on all other strands'', we may choose a tangle diagram which contains the tangle $T_\circ$ shown on the left-hand side of figure~\ref{fig:setpmone}, where each of the $n$ boxes labelled $T_\pm$ is one of the two oriented tangles $T_+$ and $T_-$ shown on the right-hand side of the same figure.
	
	Let $l$, $b$, $r$ and $t$ denote the sites specified by the regions on the left, bottom, right and top of the diagrams for $T_{\pm}$ from figure~\ref{fig:setpmone}, respectively, and consider the values of $\nabla^s_{T_{\pm}}$ given in table~\ref{tab:setpmone}. 
	In particular, observe that $\nabla^{s}_{T_{\pm}}(\sigma,t_i)=0$ for $s=r$. This means that for any site $s$ of $T_{\circ}$ which contains one of the $n$ unshaded open regions of $T_{\circ}$ in figure~\ref{fig:setpmone}, $\nabla_{T_{\circ}}^s$ vanishes when setting $t_1=\sigma$. Since a site $s$ of $T_{\circ}$ consists of exactly $(n-1)$ open regions, the remaining sites are in 1:1-correspondence with the shaded open regions, where the correspondence is given by taking the complement in the set of shaded regions.

\begin{figure}[t]
	\centering
	\raisebox{-2.4cm}{\psset{unit=1}
		\begin{pspicture}(-3.5,-2.5)(2.5,2.5)
		\pscircle*[linecolor=lightgray,linewidth=0pt](0,0){2.5}
		\pscircle*[linecolor=white,linewidth=0pt](0,0){1.5}
		\psarc(0,0){1.5}{110}{70}
		
		\uput{1.5}[30](0,0){\rput{30}(0,0){
				\pscircle*[linecolor=white,linewidth=0pt](1.5,0){1.5}
				\psarc(1.5,0){1.5}{124}{236}
				\pscustom[fillstyle=solid,fillcolor=white]{\pspolygon(0.3,-0.6)(0.3,0.6)(-0.3,0.6)(-0.3,-0.6)}}
			\rput{30}(0,0){$T_{\pm}$}}
		
		\uput{1.5}[-50](0,0){\rput{-50}(0,0){
				\pscircle*[linecolor=white,linewidth=0pt](1.5,0){1.5}
				\psarc(1.5,0){1.5}{124}{236}
				\pscustom[fillstyle=solid,fillcolor=white]{\pspolygon(0.3,-0.6)(0.3,0.6)(-0.3,0.6)(-0.3,-0.6)}}
			\rput{-50}(0,0){$T_{\pm}$}}
		
		\uput{1.5}[-130](0,0){\rput{-130}(0,0){
				\pscircle*[linecolor=white,linewidth=0pt](1.5,0){1.5}
				\psarc(1.5,0){1.5}{124}{236}
				\pscustom[fillstyle=solid,fillcolor=white]{\pspolygon(0.3,-0.6)(0.3,0.6)(-0.3,0.6)(-0.3,-0.6)}}
			\rput{-130}(0,0){$T_{\pm}$}}
		
		\uput{1.5}[-210](0,0){\rput{-210}(0,0){
				\pscircle*[linecolor=white,linewidth=0pt](1.5,0){1.5}
				\psarc(1.5,0){1.5}{124}{236}
				\pscustom[fillstyle=solid,fillcolor=white]{\pspolygon(0.3,-0.6)(0.3,0.6)(-0.3,0.6)(-0.3,-0.6)}}
			\rput{-210}(0,0){$T_{\pm}$}}
		
		\psarc[linestyle=dotted](0,0){1.5}{70}{110}
		\pscircle[linestyle=dotted](0,0){2.5}
		\rput[r](-2.6,0){$T_{\circ}=$}
		\end{pspicture}}
	\hspace{1cm}
	\begin{minipage}{0.2\textwidth}
		\centering
		\raisebox{-0.91cm}{\begin{pspicture}(-0.35,-1)(0.35,1)
			\pscustom[fillstyle=solid,fillcolor=white]{\pspolygon(0.3,-0.6)(0.3,0.6)(-0.3,0.6)(-0.3,-0.6)}\rput(0,0){$T_{+}$}
			\end{pspicture}}
		$=$
		\raisebox{-1.34cm}{\psset{unit=0.5}
			\begin{pspicture}(-1,-2)(1,4)
			\rput(0,1.8){
				\psline{->}(0.9,-0.9)(-0.9,0.9)
				\pscircle*[linecolor=white](0,0){0.3}
				\psline{->}(-0.9,-0.9)(0.9,0.9)
			}
			
			\psline{->}(0.9,-0.9)(-0.9,0.9)
			\pscircle*[linecolor=white](0,0){0.3}
			\psline{->}(-0.9,-0.9)(0.9,0.9)
			
			\rput(-0.6,3.2){$t_1$}
			\rput(0.6,3.2){$t_i$}
			\end{pspicture}}
		\\
		\raisebox{-0.91cm}{\begin{pspicture}(-0.35,-1)(0.35,1)
			\pscustom[fillstyle=solid,fillcolor=white]{\pspolygon(0.3,-0.6)(0.3,0.6)(-0.3,0.6)(-0.3,-0.6)}\rput(0,0){$T_{-}$}
			\end{pspicture}}
		$=$
		\raisebox{-1.34cm}{\psset{unit=0.5}
			\begin{pspicture}(-1,-2)(1,4)
			\rput(0,1.8){
				\psline{->}(0.9,-0.9)(-0.9,0.9)
				\pscircle*[linecolor=white](0,0){0.3}
				\psline{<-}(-0.9,-0.9)(0.9,0.9)
			}
			
			\psline{<-}(0.9,-0.9)(-0.9,0.9)
			\pscircle*[linecolor=white](0,0){0.3}
			\psline{->}(-0.9,-0.9)(0.9,0.9)
			
			\rput(-0.6,3.2){$t_1$}
			\rput(0.6,3.2){$t_i$}
			\end{pspicture}}
	\end{minipage}
	\caption{The $2n$-ended tangle $T_{\circ}$ constructed in the first step of the proof of proposition~\ref{prop:setpmone}. The shaded open regions of $T_{\circ}$ are the only ones that can be occupied by a site $s$ of $T_{\circ}$ for which $\nabla_{T_{\circ}}^s$ does not vanish when setting $t_1=\sigma$.}\label{fig:setpmone}
\end{figure}
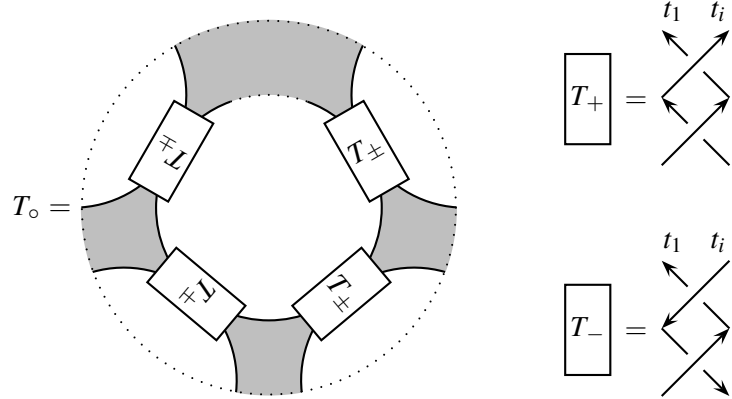
\begin{table}[t]
	\centering
	\begin{subtable}{0.4\textwidth}
		\centering
		\begin{tabular}{ccc}
			$s$ & $T_{+}$ & $T_{-}$\\ 
			\hline
			\rule{0pt}{14pt}$l$ & $(t_i-t_i^{-1})$ & $-(t_i-t_i^{-1})$ \\ 
			$b$ & $t_1^{-1} t_i^{-1}$ & $t_1^{-1} t_i$ \\ 
			$r$ & $(t_1-t_1^{-1})$ & $-(t_1-t_1^{-1})$ \\ 
			$t$ & $t_1 t_i$ & $t_1 t_i^{-1}$ \\ 
		\end{tabular} 
		\caption{}
	\end{subtable}
	\begin{subtable}{0.4\textwidth}
		\centering
		\begin{tabular}{ccc}
			$s$ & $T_{+}$ & $T_{-}$ \\ 
			\hline 
			\rule{0pt}{14pt}$l$ & $(t_i-t_i^{-1})$ & $-(t_i-t_i^{-1})$ \\ 
			$b$ & $\sigma t_i^{-1}$ & $\sigma t_i$ \\ 
			$r$ & $0$ & $0$ \\ 
			$t$ & $\sigma t_i$ & $\sigma t_i^{-1}$ \\ 
		\end{tabular}
		\caption{}
	\end{subtable}
\caption{Values of $\nabla^s_{T_{\pm}}$ before setting $t_1=\sigma$ (a) and after (b)}\label{tab:setpmone}
\end{table}

Let us fix one of the shaded regions $u$ and its corresponding site $s$. We will now compute $\nabla_{T_{\circ}}^s(\sigma,t_2,\dots,t_r)$. 
Let us write $T_j$ for the $j^\text{th}$ box $T_\pm$ in anticlockwise direction from the fixed unoccupied region $u$. Each Kauffman state of $T_{\circ}$ of the fixed site $s$ is uniquely determined by the crossing whose marker lies in the central region of $T_{\circ}$. Then, for each box $T_j$, there are two such Kauffman states which, when restricted to the box $T_k$, correspond to the site $t$ if $k<j$, the site $l$ if $k=j$ and the site $b$ if $k>j$. 
Thus, the contribution of these two Kauffman states to $\nabla_{T_{\circ}}^s(\sigma,t_2,\dots,t_r)$ is 
$$\nabla_{T_j}^l(\sigma,t_{(j)})\cdot \prod_{k<j}\nabla_{T_k}^t(\sigma,t_{(k)})\cdot
\prod_{k>j}\nabla_{T_k}^b(\sigma,t_{(k)})=\pm(t_{(j)}-t_{(j)}^{-1})\cdot \prod_{i\neq 1} (\sigma t_i)^{\alpha_j^{i}-\beta_j^{i}},$$
where
\begin{itemize}
	\item $(k)\neq 1$ denotes the index of the strand involved in $T_k$, $k=1,\dots,n$,
	\item the sign $\pm$ is determined by whether $T_j=T_+$ or $T_j=T_-$,
	\item $\alpha_j^{i}$ is the signed number of boxes $T_k$ with $(k)= i$ and $k<j$, where $T_k=T_\pm$ counts as $\pm1$, and likewise
	\item $\beta_j^{i}$ is the signed number of boxes $T_k$ with $(k)= i$ and $k>j$.
\end{itemize}
Note that by the definition of the linking number
$$
\lk(t_1,t_i)= 
\begin{cases}
\alpha_j^{i}+\beta_j^{i} &\text{ if $i\neq 1,(j)$,}\\
\alpha_j^{(j)}+\beta_j^{(j)}+1  &\text{ if $i=(j)$ and $T_j=T_{+}$,}\\
\alpha_j^{(j)}+\beta_j^{(j)}-1  &\text{ if $i=(j)$ and $T_j=T_{-}$.}\\
\end{cases}
$$
So the expression above for the contribution of the two Kauffman states corresponding to the box $T_j$ becomes
$$
\underbrace{\pm(t_{(j)}-t_{(j)}^{-1})\cdot (\sigma t_{(j)})^{\lk(t_1,t_{(j)})-2\beta_{j}^{(j)}\mp 1}}_{(\ast)}\cdot \prod_{i\neq 1,(j)} (\sigma t_i)^{\lk(t_1,t_{i})-2\beta_{j}^{i}}.
$$
Moreover, the expression $(\ast)$ can be expanded to
$$
\sigma \left((\sigma t_{(j)})^{\lk(t_1,t_{(j)})-2\beta_{j}^{(j)}}-(\sigma t_{(j)})^{\lk(t_1,t_{(j)})-2(\beta_{j}^{(j)}\pm1)}\right).
$$
By considering the two consecutive boxes $T_{j-1}$ and $T_{j}$ for $j\neq 1$, we observe
$$\beta_{j-1}^{i}=
\begin{cases*}
\beta_{j}^{i}&\text{if $i\neq1,(j)$,}\\
\beta_{j}^{(j)}\pm1&\text{if $i=(j)$.}
\end{cases*}
$$
Hence, the box $T_j$ contributes 
$$\sigma\cdot \left(\prod_{i\neq 1} (\sigma t_i)^{\lk(t_1,t_{i})-2\beta_{j}^{i}}- \prod_{i\neq 1} (\sigma t_i)^{\lk(t_1,t_{i})-2\beta_{j-1}^{i}}\right),$$
where
$$\beta_{0}^{i}:=\lk(t_1,t_i)=
\begin{cases*}
\beta_{1}^{i}&\text{if $i\neq1,(1)$,}\\
\beta_{1}^{(1)}\pm1&\text{if $i=(1)$.}
\end{cases*}
$$
So we see that when we take the sum over the contributions of all boxes $T_j$ to $\nabla_{T_{\circ}}^s(\sigma,t_2,\dots,t_r)$, most terms cancel and the only surviving ones are the second one of $T_1$ and the first one of $T_n$. Since $\beta_{n}^{i}=0$ for all $i\neq1$, we see that $\nabla_{T_{\circ}}^s(\sigma,t_2,\dots,t_r)$ is equal to
\begin{align*}
&\sigma\left((\sigma t_2)^{\lk(t_1,t_2)}\cdots (\sigma t_r)^{\lk(t_1,t_r)}-(\sigma t_2)^{-\lk(t_1,t_2)}\cdots (\sigma t_r)^{-\lk(t_1,t_r)}\right)\\
=~&\sigma^{\lk(t_1)+1}\left(t_2^{\lk(t_1,t_2)}\cdots t_r^{\lk(t_1,t_r)}-t_2^{-\lk(t_1,t_2)}\cdots  t_r^{-\lk(t_1,t_r)}\right).
\end{align*}
Finally, note that the invariant for the tangle obtained by deleting the $t_1$-component in $T_{\circ}$ is equal to 1 for the site $s$ corresponding to the region $u$ and 0 for any site containing an unshaded open region of $T_{\circ}$. Now use proposition~\ref{prop:annularglueing}.
\end{proof}

\section{\texorpdfstring{4-ended tangles and mutation invariance of $\nabla_T^s$}{4-ended tangles and mutation invariance of ∇}}\label{sec:4endedandmutation}

\begin{wrapfigure}{r}{0.3333\textwidth}
	\centering
	\vspace*{-10pt}
	\begin{pspicture}(-2.1,-1.5)(2.1,1.05)
	\rput(-1.1,0){
		\SpecialCoor
		\psline{<-}(0.6;45)(1;45)
		\psline{->}(0.6;-45)(1;-45)
		
		\psline{->}(0.6;135)(1;135)
		\psline{<-}(0.6;-135)(1;-135)
		
		\uput{1.1}[135](0,0){$t$}
		\uput{1.1}[-135](0,0){$t$}
		\uput{1.1}[45](0,0){$t$}
		\uput{1.1}[-45](0,0){$t$}
		
		\uput{0.6}[0](0,0){$c$}
		\uput{0.6}[90](0,0){$d$}
		\uput{0.6}[180](0,0){$a$}
		\uput{0.6}[270](0,0){$b$}
		
		\pscircle[linestyle=dotted](0,0){1}
		
		\rput(0,-1.3){\textit{type 1}}
	}
	
	\rput(1.1,0){
		\psline{->}(0.6;45)(1;45)
		\psline{<-}(0.6;-135)(1;-135)
		
		\psline{->}(0.6;135)(1;135)
		\psline{<-}(0.6;-45)(1;-45)
		
		\uput{1.1}[135](0,0){$t$}
		\uput{1.1}[-135](0,0){$t$}
		\uput{1.1}[45](0,0){$t$}
		\uput{1.1}[-45](0,0){$t$}
		
		\uput{0.6}[0](0,0){$c$}
		\uput{0.6}[90](0,0){$d$}
		\uput{0.6}[180](0,0){$a$}
		\uput{0.6}[270](0,0){$b$}
		\pscircle[linestyle=dotted](0,0){1}
		\rput(0,-1.3){\textit{type 2}}
	}
	\end{pspicture}
	\caption{Two orientations on a 4-ended tangle}\label{fig:4endedorientationsPOLY}
\end{wrapfigure}
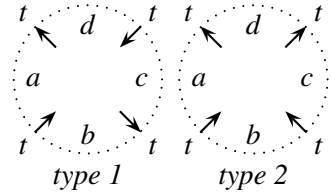
\myfixwrapfig

\begin{theorem}\label{thm:fourendedonecolour}
	Let $T$ be an oriented 4-ended tangle.
	Then up to cyclic permutation of the sites, it carries one of the two types of orientations shown in figure~\ref{fig:4endedorientationsPOLY}. Let \(\rr(T)\) denote the same tangle with the opposite orientation on all strands. After identifying the colours of the two open components of \(T\), we have
	$$\text{\(\nabla^{a}_{T}=\nabla^{c}_{\rr(T)}=\nabla^{c}_{T}\)
		and
		\(\nabla^{d}_{T}=\nabla^{b}_{\rr(T)}=\nabla^{b}_{T}\)}$$
	for type 1. For type 2, all identities but the last hold true.
\end{theorem}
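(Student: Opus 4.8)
The plan is to treat the statement in two stages: the combinatorial classification of the orientation, and then the four polynomial identities, which I would organise around corollary \ref{coralloorientsrev}. For the classification, observe that each of the two open strands meets the boundary circle in exactly one incoming and one outgoing endpoint, while closed components contribute no endpoints; hence among the four ends precisely two are incoming and two outgoing. Reading the in/out decoration cyclically around the boundary gives, up to a cyclic permutation of the sites, either the alternating pattern (the incoming ends opposite, \emph{type 1}) or the pattern with the two incoming ends adjacent (\emph{type 2}), matching figure \ref{fig:4endedorientationsPOLY}. For the identities, note that via corollary \ref{coralloorientsrev}, evaluated at $h=-1$ with the two open colours identified to a single variable $t$, the four asserted equalities split into two "cross" identities $\nabla^{a}_{T}=\nabla^{c}_{\rr(T)}$ and $\nabla^{d}_{T}=\nabla^{b}_{\rr(T)}$ and two "self" identities $\nabla^{c}_{\rr(T)}=\nabla^{c}_{T}$, $\nabla^{b}_{\rr(T)}=\nabla^{b}_{T}$, the latter becoming palindrome symmetries of $T$ alone.

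The core positive input is the two cross identities, which I expect to hold for \emph{both} types. I would prove them by a weight-matching bijection on generalized Kauffman states. The crucial local fact is read directly off figure \ref{figAlexCodesForNabla}: reversing the orientations of both strands at a crossing preserves its sign and permutes the four Alexander-code labels antipodally, so the label of each region for $\rr(T)$ equals the label of the opposite region for $T$. Combining this at every crossing with the $180^{\circ}$ rotation of the diagram, which sends a site $s$ to its antipode $\bar s$ and carries the markers of a state of $T$ to those of a state occupying $\bar s$, should match the weight computed for $\rr(T)$ at $s$ with the weight computed for $T$ at $\bar s$ termwise. Since $T$ and $\rr(T)$ share the same underlying diagram, $\mathbb{K}(T,s)=\mathbb{K}(\rr(T),s)$, and summing would give $\hat\nabla^{\bar s}_{T}=\hat\nabla^{s}_{\rr(T)}$; specialising $s=c$ and $s=b$ yields the two cross identities.

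It then remains to upgrade the cross identities to the self identities. By the reduction above each self identity is the palindrome $\nabla^{s}_{T}(t)=\nabla^{s}_{T}(-t^{-1})$ at the relevant site, and by lemma \ref{lem:ExponentsMod2Agree} the exponents of $t$ in $\nabla^{s}_{T}$ all share a common parity, so this is equivalent to $\nabla^{s}_{T}(t)=\nabla^{s}_{T}(t^{-1})$. Equivalently, using the cross identity, the self identity at $c$ is the antipodal-site equality $\nabla^{a}_{T}=\nabla^{c}_{T}$, and the self identity at $b$ is $\nabla^{b}_{T}=\nabla^{d}_{T}$; I would prove the first of these always holds and the second only in type 1. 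The mechanism I expect is a planar reflection together with proposition \ref{prop:mirrortangle}: for a site transverse to the flow of the two strands the relevant reflection interchanges the two open colours and, after identifying them, fixes $\nabla^{s}_{T}$, whereas for a site along the flow it instead reverses the orientations and so produces a variable inversion rather than an equality. The $a$- and $c$-sites are always transverse; in type 1 so are the $b$- and $d$-sites, giving all four identities, while in type 2 the $b$-site lies along the flow, the palindrome fails (as one already sees for a single crossing, where $\nabla^{b}_{T}=-t^{-1}\neq t$), and the last identity does not hold.

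The hard part will be making the state-level bijection of the second paragraph rigorous: one must verify that moving every marker to the antipodal region of its crossing again yields a \emph{valid} generalized Kauffman state — no open region receiving two markers and every closed region still occupied — and that the resulting occupied site is exactly $\bar s$. This is a clock-theorem-type argument in the spirit of \cite{essay}, and it is genuinely the place where work is needed, since the naive antipodal move is only defined crossing-by-crossing. The second delicate point is the precise bookkeeping of which sites are transverse to, versus aligned with, the strand orientations, proposition \ref{prop:reverseorientI} keeping track of the resulting factors of $h$; this is exactly where type 2 departs from type 1. Once these two points are settled, the passage through corollary \ref{coralloorientsrev}, proposition \ref{prop:mirrortangle} and lemma \ref{lem:ExponentsMod2Agree} is routine.
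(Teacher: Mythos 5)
Your classification of the two orientation types and your bookkeeping of which identities should hold (cross identities for both types, self identities at $a,c$ always and at $b,d$ only for type 1) are correct, but both mechanisms you propose for proving them fail. The central one — the crossing-by-crossing antipodal bijection on generalised Kauffman states — is not merely hard to make rigorous; the map does not exist. Take the vertical twist region with two positive crossings and both strands oriented upwards (figure~\ref{figsub:rattangle2}, a type 2 tangle): its regions are the four open regions $a,b,c,d$ and one closed bigon between the two crossings. The state in $\mathbb{K}(T,a)$ with markers in the bigon (lower crossing) and in $a$ (upper crossing) is sent by the antipodal move to the configuration with markers in $b$ and $c$, which leaves the bigon unoccupied and is therefore not a state; no clock-theorem argument can repair a map whose image leaves $\mathbb{K}(T)$. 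Worse, no termwise weight-matching of any kind can exist, because the local antipodal label fact you quote holds with the two open colours $p,q$ kept distinct, so such a bijection would prove $\hat{\nabla}^{\bar{s}}_T=\hat{\nabla}^{s}_{\rr(T)}$ over $\mathbb{Z}[p^{\pm 1/2},q^{\pm 1/2},h^{\pm 1}]$ — and that is false: for the same two-crossing tangle (with $p$ entering at the bottom left) one computes $\hat{\nabla}^{a}_{T}=q+h^{-1}q^{-1}$ while $\hat{\nabla}^{c}_{\rr(T)}=p+h^{-1}p^{-1}$. The identities of the theorem genuinely require the colours to be identified first and hold only for the sums, not state by state.

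Your fallback mechanisms are circular. The ``$180^{\circ}$ rotation of the diagram'' does not act on $\mathbb{K}(T)$: it carries states of $T$ to states of the \emph{rotated} diagram, which by definition~\ref{def:mutation} is a Conway mutant of $T$ (rotation about the $z$-axis) and in general is isotopic to neither $T$ nor $\rr(T)$; assuming it has the same invariants is essentially corollary~\ref{cor:mutation}, which the paper deduces \emph{from} this theorem. Likewise, the ``planar reflection'' you invoke for the self identities is the mirror $\m$ composed with a rotation about an axis in the plane, i.e.\ a mirror composed with another mutation, so proposition~\ref{prop:mirrortangle} cannot turn it into an identity about the fixed tangle $T$. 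The idea missing from your proposal — and the one carrying the paper's proof — is to close the tangle: capping off $T$ at site $a$, respectively at site $c$, with a crossingless arc produces two 2-ended tangle diagrams of the \emph{same} link $L$, and by the glueing formula (proposition~\ref{prop:glueing}) these closures pick out exactly $\nabla^{a}_{T}$ and $\nabla^{c}_{T}$; theorem~\ref{thm:twoended} then gives $\nabla^{a}_{T}=(t-t^{-1})\nabla_{L}=\nabla^{c}_{T}$, and corollary~\ref{cor:recoverlinkcase} applied to $L$ gives $\nabla^{c}_{\rr(T)}=\nabla^{c}_{T}$. The closures at $b$ and $d$ are orientation-compatible exactly for type 1, which yields the remaining identities there; for type 2 the paper adjoins a single crossing to produce a type 1 tangle $T'$, expands $\nabla^{b}_{T'}$ and $\nabla^{d}_{T'}$ by the glueing formula, and cancels the $\nabla^{c}$ correction terms to get $\nabla^{b}_{T}=\nabla^{d}_{\rr(T)}$. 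None of these steps appears in your proposal, so the gap is not technical but structural.
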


\begin{Remark}\label{rem:OtherRelation}
	Recall from corollary~\ref{cor:alloorientsrev} that for all tangles $T$ and sites $s$, $\nabla_{\rr(T)}^s=\rr(\nabla_T^s)$.
	Then the proposition above shows in particular that for a 4-ended tangle, one can obtain its invariant for one site from the one for the opposite site, if we use the same colour on the open strands. This is also true for tangles with different colours on the open strands, in which case one can easily prove similar relations. Moreover, there are certain 4-term relations that can be used to obtain the invariants for all sites from the invariant of a single site. For more details, see~\cite[section~I.3]{thesis}.
\end{Remark}

\begin{proof}
	In both cases, the identity $\nabla^{c}_T=\nabla^{a}_T$ follows from theorem~\ref{thm:twoended} and the fact that the two diagrams obtained by closing at site $a$ or site $c$ both represent the same link. Next, by closing the tangle at site $c$ and applying the final part of corollary~\ref{cor:alloorientsrev}, we obtain
	$$\rr\left(\frac{\nabla_T^{c}}{t-t^{-1}}\right)=\frac{\nabla_T^{c}}{t-t^{-1}},$$
	where $t$ is the colour of the two open strands of $T$. So we immediately get $\nabla^{c}_{\rr(T)}=\rr(\nabla^{c}_{T})=\nabla^{c}_T$. 
	Similarly, the other two identities for type 1 can be seen by closing $b$ or $d$. So it remains to show $\nabla^{d}_T=\nabla^{b}_{\rr(T)}$ for type 2. For this, add a negative crossing to the right of the diagram to get a tangle~$T'$. Then $\nabla_{T'}^d=\nabla^d_T-t\cdot\nabla^c_T$ and $\nabla_{T'}^b=\nabla^b_T+t^{-1}\cdot\nabla^c_T$, respectively.
	Now, the tangle $T'$ is of type~1, so in particular, $\nabla_{T'}^d=\nabla_{\rr(T')}^b=\rr(\nabla_{T'}^b)$. Thus, $$\nabla^d_T-t\cdot\nabla^c_T=\rr(\nabla^b_T+t^{-1}\cdot\nabla^c_T)=\rr(\nabla^b_{T})-t\cdot\rr(\nabla^c_{T})=\nabla^b_{\rr(T)}-t\cdot\nabla^c_{\rr(T)}.$$ 
	Since $\nabla^c_T=\nabla^c_{\rr(T)}$, we obtain $\nabla^d_{T}=\nabla^b_{\rr(T)}$ as desired.
\end{proof}

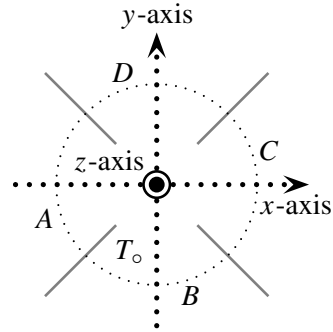
\begin{wrapfigure}{r}{0.3333\textwidth}
	\centering
	\psset{unit=0.45}
	\begin{pspicture}[showgrid=false](-4.3,-4.3)(5.3,5.4)
	
	\psline[linestyle=dotted,linewidth=2pt](0,0)(4.25,0)
	\psline[linestyle=dotted,linewidth=2pt](-4.25,0)(0,0)
	\psline[linestyle=dotted,linewidth=2pt](0,0)(0,4.25)
	\psline[linestyle=dotted,linewidth=2pt](0,-4.25)(0,0)
	
	\psline[linestyle=dotted,linewidth=2pt,arrows=->](4.3,0)(4.5,0)
	\psline[linestyle=dotted,linewidth=2pt,arrows=->](0,4.3)(0,4.5)

	{\psset{linewidth=1pt,linecolor=gray}
		\psline(1.2,1.2)(3.3,3.3)
		\psline(-1.2,-1.2)(-3.3,-3.3)
		\psline(1.2,-1.2)(3.3,-3.3)
		\psline(-1.2,1.2)(-3.3,3.3)
	}%
	
	\pscircle[linestyle=dotted](0,0){3}
	
	\rput[t](4.1,-0.3){$x$-axis}
	\rput[b](0,4.6){$y$-axis}
	
	\rput[br](-0.3,0.3){\psframebox[framesep=0pt,linecolor=white, fillstyle=solid,fillcolor=white]{$z$-axis}}
	
	\rput(-0.8,-2){$T_\circ$}
	
	\rput(-3.3,-1){$A$}
	\rput(1,-3.3){$B$}
	\rput(3.3,1){$C$}
	\rput(-1,3.3){$D$}
	
	\psdot[dotsize=11pt](0,0)
	\psdot[dotsize=9pt,linecolor=white](0,0)
	\psdot[dotsize=6pt](0,0)
	\end{pspicture}
	\caption{The three mutation axes}\label{fig:MutationAxes}
	\bigskip
\end{wrapfigure}
\myfixwrapfig
\begin{definition}\label{def:mutation}
	Let $T$ be a tangle and $T_\circ$ a 4-ended tangle obtained by intersecting $T$ with a closed 3-ball $B^3$. We may assume that all four tangle ends of $T_\circ$ lie equally spaced on a great circle on $\partial B^3$. 
	Let $T'$ be the tangle obtained from~$T$ by rotation of $T_\circ$ by $\pi$ about one of the three axes that switch pairs of endpoints of~$T_\circ$ as shown in figure~\ref{fig:MutationAxes}. We say $T'$ is obtained from $T$ by \textbf{mutation} or $T'$ is a \textbf{mutant} of $T$. $T_\circ$ is called the \textbf{mutating tangle}. 
	If $T$ is oriented, we choose an orientation of $T'$ that agrees with the one for $T$ outside of $B^3$. If this means that we need to reverse the orientation of the two open components of $T_\circ$ then we also reverse the orientation of all other components of~$T_\circ$ in $T'$; otherwise we do not change any orientation.
\end{definition}
\begin{Remark}\label{rem:defmut}
	The definition of Conway mutation given in the introduction (definition~\ref{def:INTROmutation}) is equivalent to the one above, which can be seen by twisting the ends of the mutating tangle. While from the viewpoint of the former, some symmetry relations in theorem~\ref{thm:fourendedonecolour} might seem stronger than absolutely necessary for mutation invariance, from the viewpoint of the latter they are ``exactly right''.
\end{Remark}
\begin{corollary}\label{cor:mutation}
	Let \(T\) be an oriented tangle and \(T'\) a mutant of \(T\). Suppose the colours of the two open strands of the mutating tangle agree. Then for all sites \(s\in\mathbb{S}(T)=\mathbb{S}(T')\),
	$$\nabla_T^s=\nabla_{T'}^s.$$
\end{corollary}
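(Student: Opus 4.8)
The plan is to localise the whole computation to the mutating tangle via the splitting formula and then feed in the symmetry relations of Theorem~\ref{thm:fourendedonecolour}. Write $T$ as the mutating tangle $T_\circ$ inside the ball $B^3$ glued to its complement $T_{\mathrm{ext}}$. Mutation replaces $T_\circ$ by its rotated copy $T_\circ'$ and leaves $T_{\mathrm{ext}}$, as well as all $2n$ open regions of the global diagram, untouched; in particular the global site $s$ is unaffected, so it is meaningful to compare $\nabla^s_T$ with $\nabla^s_{T'}$ for the \emph{same} $s$. Setting $h=-1$ throughout, the Kauffman-state count underlying Proposition~\ref{prop:glueing} applies verbatim to the decomposition of the diagram along the boundary circle of $B^3$ (which meets the diagram in the four ends of $T_\circ$) and gives
$$\nabla^s_T=\sum_{p\in\{a,b,c,d\}}\nabla^{p}_{T_\circ}\cdot C(s,p),$$
where $p$ ranges over the four open regions of $T_\circ$ and $C(s,p)$ is a Laurent polynomial assembled from the Kauffman states of $T_{\mathrm{ext}}$ compatible with $s$ and with position $p$ on $\partial B^3$ being left for the marker of $T_\circ$. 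The key point is that $C(s,p)$ depends only on $T_{\mathrm{ext}}$, on $s$, and on the \emph{position} $p$, so it is literally the same for $T$ and $T'$.

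Next I would pin down the action of the rotation on the local factors $\nabla^{p}_{T_\circ}$. The $\pi$-rotation about any of the three axes of figure~\ref{fig:MutationAxes} is a rigid motion of $B^3$, so $T_\circ'$ is ambient isotopic to $T_\circ$ (Theorem~\ref{thm:nablaisaninvariant}); its only effect is to permute the four boundary positions by an involution $\sigma$ (opposite regions $a\leftrightarrow c$ and $b\leftrightarrow d$ for the $z$-axis; one pair swapped and the other fixed for the $x$- and $y$-axes) and, by the convention of Definition~\ref{def:mutation}, possibly to reverse every orientation, i.e.\ to replace $T_\circ$ by $\rr(T_\circ)$. Since $C(s,p)$ is unchanged, the same splitting formula yields $\nabla^s_{T'}=\sum_{p}\nabla^{\sigma(p)}_{T_\circ}\cdot C(s,p)$ when no reversal occurs, and $\nabla^s_{T'}=\sum_{p}\nabla^{\sigma(p)}_{\rr(T_\circ)}\cdot C(s,p)$ when it does. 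Comparing coefficients of $C(s,p)$, the target identity $\nabla^s_T=\nabla^s_{T'}$ for all $s$ reduces to the local identities $\nabla^{p}_{T_\circ}=\nabla^{\sigma(p)}_{T_\circ}$, or $\nabla^{p}_{T_\circ}=\nabla^{\sigma(p)}_{\rr(T_\circ)}$, one per position $p$.

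These are precisely the relations supplied by Theorem~\ref{thm:fourendedonecolour}, which applies because the two open strands carry the same colour by hypothesis and we have identified their Alexander gradings. For the $z$-axis the identities needed are $\nabla^a_{T_\circ}=\nabla^c_{\rr(T_\circ)}$ and $\nabla^d_{T_\circ}=\nabla^b_{\rr(T_\circ)}$, together with the versions obtained by exchanging the two sides (using $\rr\rr=\id$), all of which the theorem gives for both orientation types. For the $x$- and $y$-axes, where $\sigma$ fixes one pair of opposite regions, one additionally needs a fixed-point relation such as $\nabla^a_{T_\circ}=\nabla^a_{\rr(T_\circ)}$; this follows from Corollary~\ref{cor:recoverlinkcase} applied to the link obtained by closing $T_\circ$ at the site $a$.

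The hard part will be the orientation bookkeeping, and it is genuinely necessary rather than cosmetic. For each axis and each of the two orientation types of figure~\ref{fig:4endedorientationsPOLY}, one must check that the convention of Definition~\ref{def:mutation} chooses to reverse (or not reverse) all orientations of $T_\circ$ in exactly the situation where the matching relation from Theorem~\ref{thm:fourendedonecolour} is the one carrying an $\rr$. The sharp case is a type-2 orientation: there the plain relation $\nabla^b_{T_\circ}=\nabla^d_{T_\circ}$ \emph{fails}, so invariance can only be rescued by the reversed relation $\nabla^d_{T_\circ}=\nabla^b_{\rr(T_\circ)}$, and one must verify that the mutation convention indeed forces an orientation reversal precisely then. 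This is the content of the observation (Remark~\ref{rem:defmut}) that, from the rotation viewpoint of Definition~\ref{def:mutation}, the symmetry relations are ``exactly right''; carrying it out is a finite, if slightly tedious, case check over the three axes and the two types.
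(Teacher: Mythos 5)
Your proposal is correct and follows essentially the same route as the paper's own proof: decompose $\nabla^s_T$ via the glueing formula into local factors $\nabla^{p}_{T_\circ}$ times exterior coefficients that are unchanged by the mutation, then match the site permutation and the orientation-reversal convention of definition~\ref{def:mutation} against the symmetry relations of theorem~\ref{thm:fourendedonecolour}, axis by axis and type by type (the paper shortens the $z$-axis case by viewing it as the composition of the $x$- and $y$-rotations). The case analysis you defer as ``tedious but finite'' is exactly what the paper's proof carries out, including the sharp type-2 point you single out.
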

\begin{corollary}
	The multivariate Alexander polynomial is mutation invariant, provided that the two open strands of the mutating tangle have the same colour.
	\hfill$\square$
\end{corollary}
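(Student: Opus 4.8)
The plan is to deduce this statement from the tangle-level mutation invariance of Corollary~\ref{cor:mutation} by presenting the link as the closure of a $2$-ended tangle, where there is a unique site and the tangle invariant is governed by the Conway potential function.

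First I would realise $L$ as the link obtained by joining the two ends of a suitable $2$-ended tangle $T$: cut $L$ open along a short arc lying far away from the mutating ball $B^3$, so that the mutating $4$-ended tangle $T_\circ$ sits inside a ball in the interior of the $3$-ball carrying $T$, disjoint from the closure arc. With this set-up, mutating $T_\circ$ inside $T$ (in the sense of Definition~\ref{def:mutation}) produces a $2$-ended tangle $T'$ whose closure is precisely the mutant link $L'$, and the orientation conventions of Definition~\ref{def:mutation} for $T$ match those of Conway mutation for $L$ by Remark~\ref{rem:defmut}. The assumption that the two open strands of $T_\circ$ carry the same colour is exactly the hypothesis of Corollary~\ref{cor:mutation}.

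Next, since $T$ is $2$-ended, its only site is $\emptyset$ (Theorem~\ref{thm:twoended}), so Corollary~\ref{cor:mutation} applies with $s=\emptyset$ and gives $\nabla_T^\emptyset=\nabla_{T'}^\emptyset$. By Theorem~\ref{thm:twoended}, the Conway potential functions of $L$ and $L'$ equal $\tfrac{1}{c-c^{-1}}\nabla_T^\emptyset$ and $\tfrac{1}{c-c^{-1}}\nabla_{T'}^\emptyset$ respectively, where the normalising colour $c$ of the single open strand of $T$ is unchanged by the mutation; hence $\nabla_L=\nabla_{L'}$. Passing back through the dictionary of Remark~\ref{rem:conwaypotfunctionandHFL} between the Conway potential function and the multivariate Alexander polynomial then yields $\Delta_L=\Delta_{L'}$ after identifying the variables corresponding to the two open strands of the mutating tangle.

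The only real content beyond invoking Corollary~\ref{cor:mutation} is the bookkeeping in the first step: one must verify that the link-level notion of Conway mutation (Definition~\ref{def:INTROmutation}) corresponds, under this particular choice of closure, to the tangle-level mutation of Definition~\ref{def:mutation}. The points to check are which pairs of endpoints of $T_\circ$ are interchanged by the chosen mutation axis, the induced orientation on $T'$ together with the possible reversal on the closed components of $T_\circ$, and the identification of the two open strands' variables that makes the resulting polynomials literally equal rather than merely equal up to relabelling. I expect this compatibility check to be the main obstacle, while the remaining algebra is an immediate consequence of the results already established.
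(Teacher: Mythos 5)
Your proposal is correct and takes essentially the same route as the paper: the paper states this corollary with no separate proof (it is marked as an immediate consequence of corollary~\ref{cor:mutation}), and the deduction you spell out---closing the link into a $2$-ended tangle away from the mutating ball, applying corollary~\ref{cor:mutation} at the unique site $\emptyset$, and translating back via theorem~\ref{thm:twoended} and remark~\ref{rem:conwaypotfunctionandHFL}---is precisely the intended argument. The compatibility of link-level and tangle-level mutation that you flag as the main point to check is exactly what remark~\ref{rem:defmut} records, so no gap remains.
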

\begin{wrapfigure}{r}{0.3333\textwidth}
	\medskip
	\centering
	\psset{unit=0.37}
	\begin{pspicture}(-4,-4)(4,4)
	
	\pscircle[linestyle=dotted](0,0){4}
	
	\psecurve[linecolor=darkgreen,arrows=<-](6;135)(4;135)(-2,1)(-1.85,0)(-2,-1)
	\psecurve[linecolor=darkgreen,arrows=->](2,1)(1.85,0)(2,-1)(4;-45)(6;-45)
	\psecurve[linecolor=violet](-2,1)(0,1.8)(2,1)(2.4,0)(2,-1)(0,-1.8)(-2,-1)(-2.4,0)(-2,1)(0,1.8)(2,1)(2.4,0)(2,-1)
	
	\pscircle*[linecolor=white](2,1){0.4}
	\pscircle*[linecolor=white](2,-1){0.4}
	\pscircle*[linecolor=white](-2,-1){0.4}
	\pscircle*[linecolor=white](-2,1){0.4}
	
	\psecurve[linecolor=darkgreen](-2,1)(-1.85,0)(-2,-1)(4;-135)(6;-135)
	\psecurve[linecolor=darkgreen](6;45)(4;45)(2,1)(1.85,0)(2,-1)
	\psecurve[linecolor=violet]{->}(2,1)(2.4,0)(2,-1)(0,-1.8)(-2,-1)
	\psecurve[linecolor=violet]{->}(-2,-1)(-2.4,0)(-2,1)(0,1.8)(2,1)
	
	\rput[c](-3,0){$a$}
	\rput[c](0,-3){$b$}
	\rput[c](3,0){$c$}
	\rput[c](0,3){$d$}
	
	\rput[c](4.6;135){$\textcolor{darkgreen}{p}$}
	\rput[c](4.6;-45){$\textcolor{darkgreen}{p}$}
	\rput[c](0,1){$\textcolor{violet}{q}$}
	\end{pspicture}
	\caption{A counterexample for a relation $\nabla_{\rr(T,\textcolor{violet}{q})}^s=\nabla_{T}^s$}\label{fig:mutorient}
	\medskip
\end{wrapfigure}
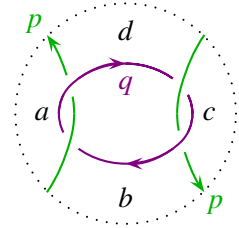

\myfixwrapfig
\begin{Remark}\label{rem:mutorient}
One might wonder, why in both definitions of Conway mutation (definitions~\ref{def:INTROmutation} and~\ref{def:mutation}), we asked that the orientations of the closed strands of the mutating tangle be reversed during mutation iff the orientations of the open strands are reversed. This is because otherwise, corollary~\ref{cor:mutation} is no longer true, for a symmetry relation $\nabla_{\rr(T,\textcolor{violet}{q})}^s=\nabla_{T}^s$ does not hold in general, where $\textcolor{violet}{q}$ is the colour of a closed strand. As a counterexample, consider the tangle $T$ shown in figure~\ref{fig:mutorient}. 
Then $\nabla_T^b=\textcolor{darkgreen}{p}^{2} \textcolor{violet}{q}  - (\textcolor{violet}{q}-\textcolor{violet}{q}^{-1})- \textcolor{darkgreen}{p}^{-2} \textcolor{violet}{q}^{-1}$.
\end{Remark}

\begin{proof}[Proof of corollary \ref{cor:mutation}]
We consider the same two cases as in theorem~\ref{thm:fourendedonecolour}, and also use its notation. Denote by $A$, $B$, $C$ and $D$ the Alexander polynomials of the corresponding counterparts of the sites $a$, $b$, $c$ and $d$ in $T\smallsetminus T_\circ$, such that
$$\nabla_T^{s}=A\nabla^{a}_{T_\circ}+B\nabla^{b}_{T_\circ}+C\nabla^{c}_{T_\circ}+D\nabla^{d}_{T_\circ},$$  
see proposition~\ref{prop:annularglueing}.
If we rotate about the $x$-axis, we have to reverse orientations in both cases of theorem~\ref{thm:fourendedonecolour}, so
$$\nabla_{T'}^s=A\nabla^{a}_{\rr(T_\circ)}+B\nabla^{d}_{\rr(T_\circ)}+C\nabla^{c}_{\rr(T_\circ)}+D\nabla^{b}_{\rr(T_\circ)}=A\nabla^{a}_{T_\circ}+B\nabla^{b}_{T_\circ}+C\nabla^{c}_{T_\circ}+D\nabla^{d}_{T_\circ}=\nabla_T^{s}.$$
Next, let us consider rotations about the $y$-axis. For type~1, we need to reverse orientations:
$$\nabla_{T'}^s=A\nabla^{c}_{\rr(T_\circ)}+B\nabla^{b}_{\rr(T_\circ)}+C\nabla^{a}_{\rr(T_\circ)}+D\nabla^{d}_{\rr(T_\circ)}
=A\nabla^{a}_{T_\circ}+B\nabla^{b}_{T_\circ}+C\nabla^{c}_{T_\circ}+D\nabla^{d}_{T_\circ}=\nabla_T^{s}$$
For type~2, we do not need to reverse orientations:
$$\nabla_{T'}^s=A\nabla^{c}_{T_\circ}+B\nabla^{b}_{T_\circ}+C\nabla^{a}_{T_\circ}+D\nabla^{d}_{T_\circ}=A\nabla^{a}_{T_\circ}+B\nabla^{b}_{T_\circ}+C\nabla^{c}_{T_\circ}+D\nabla^{d}_{T_\circ}=\nabla_T^{s}.$$
For a rotation about the $z$-axis we can argue similarly or we simply observe that it is the same as a rotation about both the $x$- and the $y$-axis (in any order).
\end{proof}
\section{\texorpdfstring{The homological tangle invariant $\HFT$}{The homological tangle invariant HFT}}\label{sec:HDsForTangles}

In this section, we consider a more general notion of tangles. We allow tangles to live in arbitrary 3-manifolds $M$ with spherical boundary $\partial M=S^2$, ie we replace the 3-ball $B^3$ in definition~\ref{def:tangle} by any such $M$. So, throughout this section, let $M$ be a 3-manifold with spherical boundary and $T$ an oriented tangle in $M$ with $n$ open and $m$ closed components. We write $M_T$ for the tangle complement $M\smallsetminus \nu(T)$, where $\nu(T)$ is an open tubular neighbourhood of $T$.

\begin{definition}\label{def:HDsfortangles}
A \textbf{Heegaard diagram $\mathcal{H}_T$ for an oriented tangle} $T$ in $M$ is a tuple $$(\Sigma_g,\A=\Ac\cup\Aa,\B),$$ where
\begin{itemize}
\item $\Sigma_g$ is an oriented surface of genus $g$ with $2(n+m)$ boundary components, denoted by~$\Gamma$, which are partitioned into $(n+m)$ pairs,
\item $\Ac$ is a set of $(g+m)$ pairwise disjoint circles $\alpha_1,\dots, \alpha_{g+m}$ on $\Sigma_g$,
\item $\Aa$ is a set of $2n$ pairwise disjoint arcs $\aaa_1,\dots,\aaa_{2n}$ on $\Sigma_g$ which are disjoint from~$\Ac$ and whose endpoints lie on~$\Gamma$, and
\item $\B$ is a set of $(g+m+n-1)$ pairwise disjoint circles $\beta_1,\dots, \beta_{g+m+n-1}$ on~$\Sigma_g$.
\end{itemize}
We impose the following condition on the data above: The 3-manifold obtained by attaching 2-handles to $\Sigma_g\times [0,1]$ along $\Ac\times\{0\}$ and $\B\times\{1\}$ is equal to the tangle complement~$M_T$ such that under this identification, 
\begin{itemize}
\item each pair of circles in $\Gamma$ is a pair of meridional circles for the same tangle component, and each tangle component belongs to exactly one such pair, and 
\item $\Aa\times \{0\}$ is equal to the intersection of ${\red S^1}\subset \partial M$ with $M_T$ in $M$.
\end{itemize}
If the tangle $T$ is oriented, we also orient the boundary components of $\Sigma_g$ as oriented meridians of the tangle components, using the right-hand rule. Our convention on the orientation of the Heegaard surface is that its normal vector field (using the right-hand rule) points in the positive direction, ie the direction of the $\beta$-curves. However, we usually draw the Heegaard surfaces such that this normal vector field points into the plane. This slightly unusual convention is chosen because when identifying a sphere with the projection plane plus a point at infinity, it is more convenient to place this point on the back of the sphere, rather than on the front. This convention makes it easier to draw Heegaard diagrams for rational tangles. 
\end{definition}

\begin{figure}[t]
	\centering
	\begin{subfigure}[b]{0.28\textwidth}
		\centering
		\psset{unit=0.2}
		\begin{pspicture}(-10.3,-10.3)(10.3,10.3)
		\psline(6;-45)(6;135)
		\pscircle*[linecolor=white](0,0){0.7}
		\psline(6;-135)(6;45)
		\pscircle[linestyle=dotted](0,0){6}
		\end{pspicture}
		\caption{A single crossing tangle}\label{fig:1crossingTforHD}
	\end{subfigure}
	\quad
	\begin{subfigure}[b]{0.34\textwidth}
		\centering
		\psset{unit=0.2}
		\begin{pspicture}(-10.3,-10.3)(10.3,10.3)
		\psrotate(0,0){90}{
			
			\psecurve[linecolor=blue](8,-0.2)(7,0)(2,2)(0,7)(-0.2,8)
			\psecurve[linecolor=blue](-4,4)(0,7)(-8.3,8.3)(-7,0)(-4,4)
			\psecurve[linecolor=blue](-8,0.2)(-7,0)(-2,-2)(0,-7)(0.2,-8)
			\psecurve[linecolor=blue](4,-4)(0,-7)(8.3,-8.3)(7,0)(4,-4)
			
			\psarc[linecolor=red](0,0){7}{0}{360}
			
			\rput(7;45){\pscircle*[linecolor=white]{1}\pscircle[linecolor=darkgreen]{1}}
			\rput(7;135){\pscircle*[linecolor=white]{1}\pscircle[linecolor=darkgreen]{1}}
			\rput(7;225){\pscircle*[linecolor=white]{1}\pscircle[linecolor=darkgreen]{1}}
			\rput(7;315){\pscircle*[linecolor=white]{1}\pscircle[linecolor=darkgreen]{1}}
			
		}
		\psdot(0,7)
		\psdot(7,0)
		\psdot(0,-7)
		\psdot(-7,0)
		
		\rput[b](5.5;-145){\textcolor{red}{$\Aa$}}
		\rput[b](2;-45){\textcolor{blue}{$\B$}}
		\rput[br](8;135){$\Gamma$}
		
		\rput[br](0,7.5){$D$}
		\rput[tl](7.5,0){$C$}
		\rput[tl](0,-7.5){$B$}
		\rput[br](-7.5,0){$A$}
		\end{pspicture}
		\caption{A Heegaard diagram for the tangle on the left}\label{fig:HDfor1crossing}
	\end{subfigure}
	\quad
	\begin{subfigure}[b]{0.28\textwidth}
		\centering
		\psset{unit=0.2}
		\begin{pspicture}(-5.1,-10.1)(5.1,10.1)
		\psellipticarc[linestyle=dotted,dotsep=1pt,linecolor=darkgreen](0,-8)(5,2){0}{180}
		\psellipticarc[linestyle=dotted,dotsep=1pt,linecolor=blue](0,0)(5,2){0}{180}
		\psline[linecolor=red](4,-9.2)(4,6.8)
		\psline[linecolor=red](-4,-9.2)(-4,6.8)
		\psellipticarc[linecolor=darkgreen](0,-8)(5,2){180}{0}
		\psline(4.93,-8)(4.93,8)
		\psline(-4.93,8)(-4.93,-8)
		\psellipse[linecolor=darkgreen](0,8)(5,2)
		\psellipticarc[linecolor=blue](0,0)(5,2){180}{0}
		
		\psellipse[linecolor=darkgreen](0,0)(0.5,0.6)
		
		\psellipse[linecolor=darkgreen](0,-4)(0.5,0.6)
		
		\psellipse[linecolor=red](0,-2)(2,5)
		\psdot(1.98,-1.82)
		\psdot(-1.98,-1.82)
		\psdot(4,-1.2)
		\psdot(-4,-1.2)
		\end{pspicture}
		\caption{A ladybug; this terminology is taken from \cite[figure~9c]{BaldwinLevine}.}\label{fig:ladybug}
	\end{subfigure}
	\caption{The two building blocks of tangle Heegaard diagrams in example~\ref{exa:HDforonecrossing}}\label{fig:HDBuildingBlocks}
\end{figure}
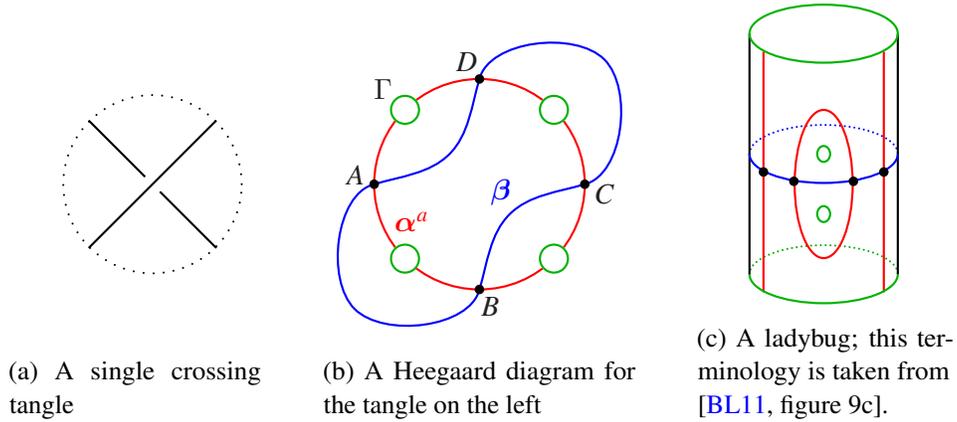

\begin{example}\label{exa:HDforonecrossing}
	For a 1-crossing tangle in $B^3$, we draw the Heegaard diagram shown in figure~\ref{fig:HDfor1crossing}. From this, we can obtain a Heegaard diagram for any tangle in $B^3$ without closed components as follows: We cut a tangle diagram of a given tangle up into 4-ended tangles with a single crossing each. Then, for each such component, we can use our Heegaard diagram from figure~\ref{fig:HDfor1crossing} and then glue these copies together along $\Gamma$ according to the tangle diagram. For tangles in $B^3$ with closed components, we can do the same except that into each closed component, we insert one copy of the ``ladybug'' from figure~\ref{fig:ladybug}.
\end{example}

\begin{example}\label{exa:RationalTangles}
	For rational tangles, we can draw Heegaard diagrams on genus 0 surfaces. As illustrated in figure~\ref{fig:HDforRatTan}, this can be seen by performing Dehn twists on the Heegaard diagram for the 1-crossing tangle in figure~\ref{fig:HDfor1crossing}. In fact, a 4-ended tangle without closed components is rational iff it has a genus~0 Heegaard diagram. Indeed, a genus 0 Heegaard diagram for such a tangle has no $\alpha$-circles and just a single $\beta$-circle. By definition, we know that performing surgery along this $\beta$-circle gives us two cylinders, so it separates two punctures from the other two. 
\end{example}

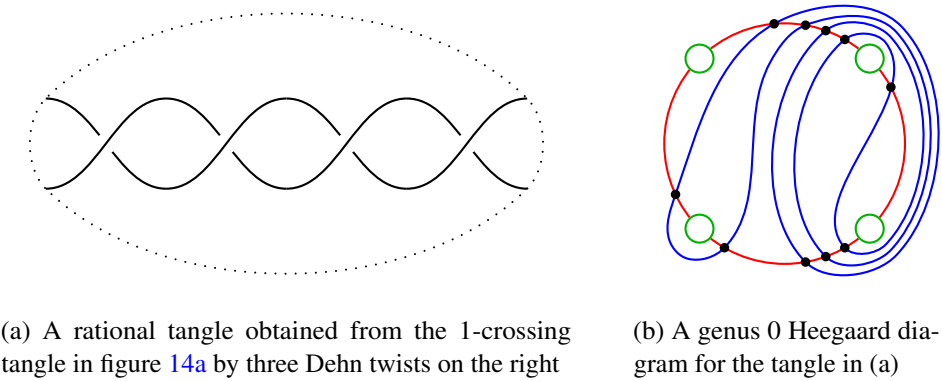
\begin{figure}[t]
	\centering
	\psset{unit=0.2}
	\begin{subfigure}[b]{0.6\textwidth}
		\centering
		\begin{pspicture}(-2,-10.3)(34,10.3)
		\psscalebox{1 -1}{
			\psecurve[linestyle=dotted](32,3)(0,3)(0,-3)(32,-3)(32,3)(0,3)(0,-3)
			
			\psecurve(-8,3)(0,-3)(8,3)(16,-3)
			\psecurve(8,3)(16,-3)(24,3)(32,-3)
			
			\psecurve(0,3)(8,-3)(16,3)(24,-3)
			\psecurve(16,3)(24,-3)(32,3)(40,-3)
			
			\pscircle*[linecolor=white](4,0){0.7}
			\pscircle*[linecolor=white](12,0){0.7}
			\pscircle*[linecolor=white](20,0){0.7}
			\pscircle*[linecolor=white](28,0){0.7}
			
			\psecurve(0,-3)(8,3)(16,-3)(24,3)
			\psecurve(16,-3)(24,3)(32,-3)(40,3)
			
			\psecurve(-8,-3)(0,3)(8,-3)(16,3)
			\psecurve(8,-3)(16,3)(24,-3)(32,3)
		}
		\end{pspicture}
		\caption{A rational tangle obtained from the 1-crossing tangle in figure~\ref{fig:1crossingTforHD} by three Dehn twists on the right}\label{fig:tangleforHDforRatTan}
	\end{subfigure}
	\quad
	\begin{subfigure}[b]{0.33\textwidth}
		\centering
		\begin{pspicture}(-10.3,-10.3)(10.3,10.3)
		\psscalebox{1 -1}{
			\psarc[linecolor=red](0,0){8}{0}{360}
			
			\SpecialCoor
			\rput(8;45){\pscircle*[linecolor=white]{1}\pscircle[linecolor=darkgreen]{1}}
			\rput(8;135){\pscircle*[linecolor=white]{1}\pscircle[linecolor=darkgreen]{1}}
			\rput(8;225){\pscircle*[linecolor=white]{1}\pscircle[linecolor=darkgreen]{1}}
			\rput(8;315){\pscircle*[linecolor=white]{1}\pscircle[linecolor=darkgreen]{1}}
			
			\psecurve[linecolor=blue]
			(10;130)(10;140)(8;155)
			(8;-95)(11;-45)
			(10.5;45)(8;80)
			(8;-70)(10;-45)
			(9.5;45)(8;60)
			(9.5;-45)(8;-60)
			(8;70)(10;45)
			(10.5;-45)(8;-80)
			(5.5;-100)
			(8;120)(10;130)(10;140)(8;155)
			
			\psdot(8;155)
			\psdot(8;120)
			\psdot(8;80)
			\psdot(8;70)
			\psdot(8;60)
			\psdot(8;-28)
			\psdot(8;-60)
			\psdot(8;-70)
			\psdot(8;-80)
			\psdot(8;-95)
		}
		\end{pspicture}
		\caption{A genus 0 Heegaard diagram for the tangle in (a)}\label{fig:HDforHDforRatTan}
	\end{subfigure}
	\caption{Illustration of example~\ref{exa:RationalTangles}}\label{fig:HDforRatTan}
\end{figure}

More generally, we can regard Heegaard diagrams for tangles as bordered sutured Heegaard diagrams, by endowing the tangle complement $M_T$ with a particular bordered sutured structure. A reader familiar with these terms can skip the following paragraphs and go directly to definition~\ref{def:TanglesAsSpecialBSMflds}; for everyone else, and also for the purpose of fixing our notation, we will give a very brief outline of what these terms mean. 

\begin{definition}\label{def:SuturedManifold}
	A \textbf{sutured manifold} $(X,\Gamma)$ is a compact oriented 3-manifold $X$ with boundary together with a set $\Gamma\subset\partial X$ of smoothly embedded simple closed curves, called the \textbf{sutures}, which divide $\partial X$ into two (not necessarily connected) subsurfaces $R_+$ and $R_-$ such that each suture lies in the boundary of both $R_+$ and $R_-$. Usually, an orientation of $R_+$ and $R_-$ is fixed to distinguish these two subsurfaces, namely the normal vector field of $R_+$ points out of $X$, whereas the normal vector field of $R_-$ points into the 3-manifold. Such an orientation of $R_+$ and $R_-$ induces an orientation on the sutures $\Gamma$ which in turn is sufficient to distinguish $R_+$ and $R_-$. 
	
	A sutured manifold $(X,\Gamma)$ is called \textbf{balanced} if $X$ has no closed component, the two surfaces $R_+$ and $R_-$ have the same Euler characteristic and each component of $\partial X$ contains at least one suture. 
\end{definition}

\begin{Remark}
	When comparing the definition above to \cite[definitions~2.1 and~2.2]{Juhasz}, note that we have slightly simplified the notation by recording only the sutures and not their tubular neighbourhoods. We have also omitted any specification of toroidal boundary components of $X$, ie $T(\gamma)$ in the notation of \cite[definition~2.1]{Juhasz}. This is because we will focus on balanced sutured manifolds for which such components are not present anyway as they do not contain any suture. 
\end{Remark}

\begin{definition}
	A \textbf{Heegaard diagram of a balanced sutured manifold} $(M,\Gamma)$ is a tuple $(\Sigma, \A, \B)$, where $\Sigma$ is a compact oriented surface with boundary and $\A$ and $\B$ are
	two sets of pairwise disjoint simple closed curves in the interior of $\Sigma$, such that the 3-manifold $M$ is obtained from $\Sigma\times [0,1]$ by attaching 3–dimensional 2–handles
	along the curves $\A \times \{ 0 \}$ and $\B\times\{1 \}$ and the sutures are defined as $\partial\Sigma\times \{\frac{1}{2}\}$ with the orientation induced by $\Sigma$. Thus, $R_{-}$ is the result of surgery of $\Sigma\times\{0\}$ along the 2-handles corresponding to the $\alpha$-curves, which agrees with Zarev's and Juhász's convention.
\end{definition}

\begin{Remark}
	By~\cite[proposition~2.9]{Juhasz}, the number of curves in $\A$ and $\B$ are the same in a Heegaard diagram of a balanced sutured manifold. Furthermore, Juhász shows in~\cite[proposition~2.14]{Juhasz}, using a standard argument from Morse theory, that any balanced sutured manifold has a Heegaard diagram. 
\end{Remark}

\begin{definition}
	An \textbf{arc diagram} $\mathcal{Z}$ is a triple $(Z, \mathbf{a},M)$, where $Z$ is a (possibly empty) set of oriented line segments, $\mathbf{a}$ an even number of points on $Z$ and $M$ a matching of points in $\mathbf{a}$. The \textbf{graph $G(\mathcal{Z})$ of an arc diagram} $\mathcal{Z}$ is the graph obtained from the line segments $Z$ by adding an edge between matched points in $\mathbf{a}$. An arc diagram is called degenerate if surgery of the 1-manifold $Z$ along all matched points contains closed components. 
\end{definition}
\begin{definition}
	A \textbf{bordered sutured manifold} is a tuple 
	$(X,\Gamma, \mathcal{Z},\phi),$
	where
	\begin{itemize}
		\item $X$ is a sutured manifold with sutures $\Gamma$;
		\item $\mathcal{Z}=(Z, \mathbf{a},M)$  is a non-degenerate arc diagram;
		\item $\phi$ is a (smooth) embedding of $G(\mathcal{Z})$ into the closure of $R_-$ such that $\im(\phi)\cap\Gamma=\phi(Z)$.
	\end{itemize}
\end{definition}

\begin{definition}\label{def:HDforBorderedSuturedMfdls}
	A \textbf{Heegaard diagram of a bordered sutured manifold} is obtained from a Heegaard diagram of the underlying sutured manifold by adding the graphs of the arc diagrams to it. To be more precise, consider a Heegaard diagram of the underlying sutured manifold. Then we can embed the graph $G(\mathcal{Z})$ into $R_-$ in such a way that it misses the 2-handles corresponding to the $\alpha$-curves, simply by sliding them off those 2-handles. This gives us an embedding of $G(\mathcal{Z})$ into the Heegaard surface such that its image does not intersect the $\alpha$-curves. We view the images of the edges connecting points in $\mathbf{a}$ as $\alpha$-arcs.
\end{definition}

\begin{Remark}
	In~\cite{ZarevThesis}, Zarev developed a theory which simultaneously generalises Juhász's Heegaard Floer homology $\SFH$ for sutured 3-manifolds~\cite{Juhasz} and Lipshitz, Ozsváth and Thurston's bordered Heegaard Floer homology~\cite{LOT}. In general, given an arc diagram~$\mathcal{Z}$, Zarev defines an algebra~$\mathcal{A}(\mathcal{Z})$. Moreover, with a bordered sutured Heegaard diagram $\mathcal{H}_X$ of a bordered sutured manifold $X$, he associates (up some analytic choices) two different algebraic structures over~$\mathcal{A}(\mathcal{Z})$, so-called type~A and type~D structures, denoted by $\BSA(\mathcal{H}_X)$ and $\BSD(\mathcal{H}_X)$, respectively. In some very precise sense (see for example~\cite[appendix~A]{thesis}), these algebraic structures can be regarded as generalisations of chain complexes, and one can define notions of homotopy equivalence for them in the usual way. It turns out that up to homotopy equivalence, $\BSA(\mathcal{H}_X)$ and $\BSD(\mathcal{H}_X)$ are invariants of the bordered sutured manifold $X$. Hence, Zarev denotes them instead by $\BSA(X)$ and $\BSD(X)$, respectively. 
	Since type~A and type~D structures are dual to each other and type~D structures are often easier to understand, we will restrict ourselves in the following to type~D structures. 
\end{Remark}

\begin{definition}\label{def:TanglesAsSpecialBSMflds}
	We can endow the tangle complement $M_T$ with the structure of a bordered sutured manifold as follows: Each closed component gets two oppositely oriented meridional circles and around each tangle end, we have a single suture such that the boundary of $M$ minus some contractible neighbourhoods of the tangle ends lies in $R_-$. Furthermore, the arcs ${\red S^1}\smallsetminus \nu(\partial T)$ together with small neighbourhoods of the endpoints on the sutures $\Gamma$ constitute the arc diagram, which is indeed non-degenerate. Then, Heegaard diagrams for tangles can be viewed as bordered sutured Heegaard diagrams. In particular, it follows that every tangle $T$ in a 3-manifold $M$ with spherical boundary has a Heegaard diagram. 
\end{definition}

Of course, Heegaard diagrams for tangles are far from being unique. As a special case of \cite[proposition~4.1.5]{ZarevThesis}, we note the following result.

\begin{lemma}\label{lem:HeegaardMoves}
	Any two diagrams for the same tangle can be obtained from one another by a sequence of the following \textbf{Heegaard moves}:
	\begin{itemize}
		\item an isotopy of an \(\alpha\)- or \(\beta\)-circle or an isotopy of an \(\alpha\)-arc relative to its endpoints,
		\item a handleslide of a \(\beta\)-circle over another \(\beta\)-circle,
		\item a handleslide of an \(\alpha\)-curve over an \(\alpha\)-circle and
		\item stabilisation.\qed
	\end{itemize} 
\end{lemma}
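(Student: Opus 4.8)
The plan is to translate Heegaard diagrams into Morse functions and then run a Cerf-theory argument, as in the closed case \cite{OSHF3mfds}, while carefully tracking the boundary data. By Remark~\ref{rem:conventions1}, a Heegaard diagram \(\mathcal{H}_T\) for \(T\) in \(M\) is the middle level \(f^{-1}(\tfrac32)\) of a self-indexing Morse function \(f\) on \(M\) that equals \(\tfrac32\) near the boundary sphere, has a single minimum on each open component and a single minimum and maximum on each closed component of \(T\), and whose gradient-like flow carves out \(\Ac\) and \(\B\) as the descending/ascending manifolds of the index~1 and index~2 critical points. Conversely, every diagram of the form in Definition~\ref{def:HDsfortangles} is realised by such an \(f\). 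Hence it is enough to show that any two such Morse functions \(f_0,f_1\) presenting the same tangle can be joined by a generic path realising only the four listed moves.

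First I would connect \(f_0\) to \(f_1\) by a generic one-parameter family \(\{f_t\}_{t\in[0,1]}\), all members of which agree with the fixed model near \(\partial M\) and near \(T\). By Cerf theory, after perturbation each \(f_t\) is Morse except at finitely many times, where one of two things happens. A birth--death of a cancelling index~1/2 pair changes the genus of the middle level and appears as a (de)stabilisation; a time at which the ascending manifold of one index~1 (resp.\ index~2) critical point sweeps across another produces an \(\alpha\)- (resp.\ \(\beta\)-)handleslide over a circle; and on the generic open intervals the critical points merely move, inducing ambient isotopies of \(\Ac\) and \(\B\). The arcs \(\Aa\) are not interior critical-point data but are cut out by the fixed parametrisation \(S^1\subset\partial M\); their endpoints on \(\mathcal{Z}\) are pinned throughout, so the only moves they undergo are isotopies relative to those endpoints and handleslides over \(\alpha\)-circles, matching the statement.

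The main obstacle is keeping the whole family \(\{f_t\}\) compatible with the boundary structure, so that no critical points migrate onto \(\partial M\) and the arc diagram on \(\mathcal{Z}\) stays rigid; this is exactly what the bordered sutured reformulation of Remark~\ref{Rem:TanglesAsSpecialBSMflds} is designed to handle. The cleanest rigorous route is therefore to invoke Zarev's theorem that any two bordered sutured Heegaard diagrams for the same bordered sutured manifold are related by his Heegaard moves \cite{Zarev} (see also \cite[section~II.3]{thesis}), and then to verify that, under the dictionary of Remark~\ref{Rem:TanglesAsSpecialBSMflds}, Zarev's moves reduce to precisely the isotopies, \(\beta\)-over-\(\beta\) and \(\alpha\)-curve-over-\(\alpha\)-circle handleslides, and stabilisations listed here. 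The one subtlety to check in this last step is that a handleslide of an \(\alpha\)-arc over another \(\alpha\)-arc is never required, which is consistent with the list only permitting handleslides over \(\alpha\)-circles.
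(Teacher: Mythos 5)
Your proposal is correct and ultimately takes the same route as the paper: the paper's proof simply observes that, in view of remark~\ref{Rem:TanglesAsSpecialBSMflds}, the lemma is a special case of Zarev's proposition~4.5, which in turn follows from standard Morse theory. Your Cerf-theoretic sketch is essentially an unpacking of that last clause, and your final paragraph (invoking Zarev and checking his moves match the list, with no arc-over-arc slides) is exactly the paper's argument.
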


\begin{Remark}
	In general, the arc diagrams in bordered sutured Heegaard Floer theory play the following role: Given a sutured 3-manifold $X$ and a surface $F$ which divides $X$ into two pieces (and satisfies certain other sufficiently general conditions), one can choose a handle decomposition of $F$ which corresponds to an arc diagram such that the embedding of this arc diagram onto the boundary of each piece turns this piece into a bordered sutured manifold. A central result of Zarev's thesis~\cite[theorem~12.3.2]{ZarevThesis} is that the sutured Floer homology $\SFH(X)$ can be computed from the bordered sutured invariants of the two pieces. 
		
	Returning to our case, the arc diagram of the bordered sutured structure on $M_T$ parametrizes $2n$ disjoint strips connecting the tangle ends. For glueing tangles together to obtain knots or links, in general, one needs to choose arc diagrams parametrizing the $2n$-punctured sphere, see for example~\cite[construction~1.7]{AlishahiLipshitz}. 
	Therefore, the homological invariant $\HFT$ defined below
	from the bordered sutured structure chosen above on $M_T$ does not satisfy a glueing formula of the desired form. Nonetheless, by extending the chosen arc diagram on $M_T$, I show in~\cite[section~II.3]{thesis} that one only needs to modify the differential on $\HFT$ to obtain a glueable tangle invariant.
\end{Remark}

\begin{definition}\label{def:HFTfromBSD}
For the tangle complement $M_T$, the arc diagrams are particularly simple, and so are the algebras associated with them. More precisely, the algebra~$\mathcal{A}(\mathcal{Z})$ agrees with the ring of idempotents~$\mathcal{I}(\mathcal{Z})$ over which it is defined. $\mathcal{I}(\mathcal{Z})$ is equal to the direct sum of $\mathbb{Z}/2.\iota_{s}$ over all subsets $s$ of the set of $\alpha$-arcs~$\Aa$. A type~D structure over this algebra~$\mathcal{A}(\mathcal{Z})$ is equivalent to a set of vector spaces over $\mathbb{Z}/2$, indexed by subsets $s$ of~$\Aa$; so we obtain
$$\BSD(M_T)=\bigoplus_s \iota_s.\BSD(M_T),$$
where $\iota_s.\BSD(M_T)$ is now an honest chain complex over $\mathbb{Z}/2$. 

If we fix a Heegaard diagram $\mathcal{H}_T=(\Sigma_g,\A,\B)$ for $T$, $\BSD(\mathcal{H}_T)$ is generated by tuples $\x=(x_1,\dots,x_{g+m+n-1})$ of points $x_1,\dots,x_{g+m+n-1}\in\A\cap\B$ such that there is exactly one point $x_i$ on each $\alpha$- and $\beta$-circle, and at most one point on each $\alpha$-arc. We denote the set of these \textbf{generators} by $\mathcal{G}=\mathcal{G}_{\mathcal{H}_T}$. Note that each generator occupies exactly $(n-1)$ $\alpha$-arcs. So like in definition~\ref{def:basic}, we define a \textbf{site} of $T$ to be an $(n-1)$-element subset $s$ of $\Aa$ and denote the set of all sites of $T$ by $\mathbb{S}(T)$. Then, we can associate with each $\x\in\mathcal{G}$ the site $s(\x)$ consisting of all those $\alpha$-arcs that are occupied by an intersection point in $\x$. (Note that unlike sites of general Kauffman states, the site of a generator is always unique.) We denote the set of all generators corresponding to a given site~$s$ by $\mathcal{G}^s$. Thus, we obtain a partition 
$$\mathcal{G}=\coprod_{s\in\mathbb{S}(T)}\mathcal{G}^s.$$

From the discussion above, it follows that $\iota_s.\BSD(\mathcal{H}_T)$ vanishes unless $s$ is a site. So we define the chain complex $\CFT(\mathcal{H}_T,s)$ as $\iota_s.\BSD(\mathcal{H}_T)$ and 
$$\CFT(\mathcal{H}_T):=\bigoplus_{s\in\mathbb{S}(T)}\CFT(\mathcal{H}_T,s).$$
By construction, both the chain homotopy types of $\CFT(\mathcal{H}_T,s)$ and $\CFT(\mathcal{H}_T)$ are invariants of $M_T$ and therefore of the tangle~$T$. We denote their homologies by $\HFT(T,s)$ and $\HFT(T)$, respectively. 
\end{definition}

While the generators of a type D structure corresponding to a bordered sutured Heegaard diagram are easy to describe combinatorially, the definition of the differential is more involved; in particular, it requires a substantial amount of analytic machinery, which we will not discuss. However, we use the remainder of this section to describe the combinatorics that go into the definition of the differential in our special case. Let us fix a Heegaard diagram $\mathcal{H}=\mathcal{H}_T=(\Sigma_g,\A,\B)$ for our tangle $T$.

\begin{definition}\label{def:HDtoCFTbasics}
	We define $D_\mathcal{H}$ to be the free Abelian group generated by the connected components of $\Sigma_g\smallsetminus(\A\cup\B\cup\Gamma )$, which we call \textbf{regions}. In other words,
	$$D_{\mathcal{H}}:=H_2(\Sigma_g,\A\cup\B\cup\Gamma;\mathbb{Z}).$$
	Elements of this group are called \textbf{domains}. Given two generators $\x,\y\in\mathcal{G}$, we define $\pi_2(\x,\y)$ to be the subset of those domains $\phi$ which satisfy 
	$$\partial(\partial\phi\cap\B)=\x-\y.$$
	We call elements in $\pi_2(\x,\x)$ \textbf{periodic domains}. Note that this does not depend on the choice of $\x\in\mathcal{G}$.
	Furthermore, let 
	$$\pip(\x,\y):=\{\phi\in\pi_2(\x,\y)\vert\,\Gamma\cap\,\phi=\emptyset\}.$$
	A Heegaard diagram is called \textbf{admissible} if 
	every non-zero periodic domain in $\pip(\x,\x)$ has positive and negative multiplicities. 
\end{definition}

\begin{lemma}\label{lem:HeegaardMovesAdmissibility}
	Every tangle diagram can be made admissible by isotopies of \(\B\). Furthermore, any two such diagrams for the same tangle can be transformed into one another by a sequence of Heegaard moves from lemma~\ref{lem:HeegaardMoves} through admissible diagrams.
\end{lemma}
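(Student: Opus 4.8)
The plan is to follow the winding technique of Ozsváth and Szabó, adapted to the bordered sutured setting of remark~\ref{Rem:TanglesAsSpecialBSMflds}. For the first statement, I would begin by analysing the lattice of periodic domains supported away from the boundary, namely $\pi^{\partial}_2(\x,\x)$. Because $M$ is a $\mathbb{Z}$-homology 3-ball with spherical boundary, this group is a finitely generated free Abelian group whose rank is controlled by the second homology of the tangle complement $M_T$; concretely, one can exhibit an explicit basis $P_1,\dots,P_k$ of periodic domains whose boundaries are integral combinations of $\alpha$- and $\beta$-circles not meeting $\mathcal{Z}$. The admissibility condition is precisely that no nonzero element of this lattice has all its multiplicities of a single sign.

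Next I would realise admissibility by isotopies of $\B$ alone. For each basis domain $P_i$, choose a curve $\gamma_i$ transverse to the $\alpha$-curves appearing in $\partial P_i$ and wind a $\beta$-circle of $\partial P_i$ along $\gamma_i$ by a thin finger move. Each such winding replaces a neighbourhood by a sequence of parallel strips on which the multiplicity of $P_i$ changes by $\pm 1$ in alternation, so that after winding $P_i$ acquires regions of both signs. The technical content is to carry out these windings simultaneously: one arranges the $\gamma_i$ to be pairwise disjoint and disjoint from $\Aa$, and checks that the winding performed for $P_i$ does not undo the sign change already produced for $P_j$. Since any nonzero periodic domain is an integral combination $\sum n_i P_i$ with some $n_{i_0}\neq 0$, and the winding strips for $P_{i_0}$ dominate the local multiplicities there, the combination inherits both signs as well. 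This produces the required admissible diagram using only $\B$-isotopies.

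For the second statement, I would start from an arbitrary sequence of Heegaard moves connecting two admissible diagrams $\mathcal{H}_1$ and $\mathcal{H}_2$, which exists by lemma~\ref{lem:HeegaardMoves} but may pass through inadmissible diagrams. The strategy is to insert winding isotopies---themselves Heegaard moves---before and after each intermediate move so that the entire interpolating family remains admissible. For an isotopy of a single curve one winds sufficiently beforehand so that the one-parameter family never violates the sign condition, and unwinds afterwards; for a handleslide or a stabilisation one performs the move in a region disjoint from the winding strips and verifies admissibility directly. The main obstacle, and the step demanding the most care, is exactly this interpolation: ensuring that the windings covering one move are compatible with those of the next, and that a handleslide does not create a new periodic domain of fixed sign. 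I expect this to reduce to the same multiplicity bookkeeping as in the first part, since a handleslide changes the basis of $\pi^{\partial}_2(\x,\x)$ only by an invertible integral transformation, under which the ``both signs after sufficient winding'' property is preserved.
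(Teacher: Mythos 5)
The paper does not run a winding argument at all: in view of remark~\ref{Rem:TanglesAsSpecialBSMflds} it observes that tangle Heegaard diagrams are bordered sutured Heegaard diagrams and then simply quotes \cite[proposition~4.11 and corollary~4.12]{Zarev}, together with the remark that Zarev's $\pi_2$ coincides with the paper's $\pi_2^\partial$ in this special case. So what you propose is to re-prove the cited results directly by the Ozsv\'ath--Szab\'o winding technique. That is a legitimate route (it is how Zarev and Juh\'asz prove their statements), but your sketch has a genuine gap exactly at the crux.

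The gap is the deduction ``each basis domain $P_i$ acquires both signs after winding, hence so does every nonzero combination $\sum n_iP_i$, by domination''. First, the implication from basis elements to combinations is false in general: if $P_1$ has multiplicities $(1,-1,1)$ and $P_2$ has multiplicities $(1,1,-1)$ on three regions, both have both signs, yet $P_1+P_2=(2,0,0)$ is nonzero and non-negative. Second, the domination claim cannot be repaired by winding more, because the multiplicities contributed near $\gamma_{i_0}$ by the other basis domains scale with their coefficients $n_j$, which are unbounded, whereas the winding for $P_{i_0}$ is fixed once and for all; domination only makes sense if you first arrange that all $P_j$ with $j\neq i_0$ have multiplicity \emph{zero} on the winding region for $P_{i_0}$, which you never do. The standard way out---and the actual content of the proofs in \cite{Juhasz} and \cite{Zarev}---is to anchor the windings at the boundary: every domain in $\pi_2^\partial(\x,\x)$ has multiplicity $0$ on all regions touching $\mathcal{Z}$, so one winds in \emph{both} directions along arcs that begin in such regions and are chosen dual to a basis of $\pi_2^\partial(\x,\x)$ under the intersection pairing. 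Then for any nonzero $P=\sum n_iP_i$ and any $i_0$ with $n_{i_0}\neq 0$, the multiplicities of $P$ next to the anchor of $\gamma_{i_0}$ run through $0,\pm n_{i_0},\pm 2n_{i_0},\dots$, giving both signs for \emph{all} $P$ simultaneously, with no domination needed. The same defect recurs in your second part: a handleslide changes the regions, not just the lattice $\pi_2^\partial(\x,\x)$, so ``both signs'' is not a property that can be transported through ``an invertible integral transformation''; one must re-run the anchored winding on each intermediate diagram.

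Two further points. Your argument winds only $\beta$-circles (as the statement requires), but this can only work if every nonzero domain in $\pi_2^\partial(\x,\x)$ actually has a $\beta$-circle in its boundary---a domain bounded by $\alpha$-circles alone is blind to $\beta$-isotopies. This is true here, but it is a consequence of the explicit description in lemma~\ref{lem:noadmissibilityissues} ($\pi_2^\partial(\x,\x)$ is freely generated by the tori of the closed tangle components, each of which is a difference of a $\beta$-annulus and an $\alpha$-annulus), not of your generic setup. Using that lemma would in fact simplify your whole scheme: diagrams of tangles without closed components are automatically admissible, and in general you need only one anchored winding per closed component.
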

\begin{proof}
	This is a special case of \cite[proposition~4.4.2 and corollary~4.4.3]{ZarevThesis}. Note that our terminology differs slightly from Zarev's: he does not include regions near basepoints in $\pi_2(\x,\y)$, so in our special case, his definition of $\pi_2(\x,\y)$ coincides with our $\pip(\x,\y)$. Thus, the distinction in his terminology between periodic and provincial periodic domains becomes irrelevant. 
\end{proof}

We can now describe the differential on $\CFT(\mathcal{H}_T)$. Given a generator $\x\in\mathcal{G}$, 
\begin{equation}
	\partial \x=\sum_{\y\in \mathcal{G}}\sum_{\substack{\phi\in\pip(\x,\y)\\ \mu_{\x,\y}(\phi)=1}}\#\widehat{\mathcal{M}}(\x,\y;\phi)\cdot\y, \label{eq:differential}\tag{$\dagger$}
\end{equation}
where $\mu_{\x,\y}(\phi)$ is the Maslov index of $\phi$, see definition~\ref{def:MaslovIndex}, and  $\#\widehat{\mathcal{M}}(\x,\y;\phi)$ denotes the count (modulo 2) of holomorphic curves associated with domains $\phi$ connecting $\x$ to $\y$, see for example \cite{LipshitzCyl}. Note that the sum in~\eqref{eq:differential} is over domains in $\pip(\x,\y)$, ie those that avoid~$\Gamma$. Since there are no such domains between generators in distinct sites, the chain complex $\CFT(\mathcal{H}_T)$ admits a splitting into summands $\CFT(\mathcal{H}_T,s)$.

\section{\texorpdfstring{Gradings on $\HFT$}{Gradings on HFT}}\label{sec:Gradings}

The chain complexes $\CFT(T,s)$ from the previous section inherit various gradings from bordered sutured theory. In general, the gradings in this theory are rather complicated; for instance, the grading groups are not necessarily Abelian. However, in our case, the gradings allow a much simpler description, in particular if one imposes an additional condition on $M$, namely that its reduced homology vanishes: Throughout this section, let $M$ be a $\mathbb{Z}$-homology 3-ball with spherical boundary and $T\subset M$ an oriented tangle with $n$ open and $m$ closed components and $s$~a~site of~$T$. Again, we write $M_T$ for the tangle complement. We also fix a Heegaard diagram $\mathcal{H}=\mathcal{H}_T=(\Sigma_g,\A,\B)$ for our tangle $T$.

\begin{definition}\label{def:boundaryOfM}
	From the definition of a Heegaard diagram of a tangle, it follows that the surface $S_{\B}(\Sigma_g)$ obtained by surgery along the curves in $\B$ is a disjoint union of $(n+m)$ annuli, each of whose boundary is a pair in $\Gamma$. Similarly, the surface $S_{\Ac}(\Sigma_g)$ obtained by surgery along the curves in $\Ac$ is a disjoint union of $m$ annuli, each of whose boundary is a pair in $\Gamma$, and a 2-sphere with $2n$ boundary components, which is cut into two discs by the $\alpha$-arcs. We call these $(n+2m)$ annuli and two discs \textbf{elementary periodic domains}. This terminology is justified by the following lemma.
\end{definition}

\begin{lemma}\label{lem:noadmissibilityissues}
There is a canonical identification \(\pi_2(\x,\x)\cong H_2(M_T,\Gamma\cup\Aa;\mathbb{Z})\). The right hand side is the free Abelian group generated by the elementary periodic domains, modulo the relation that the sum of all of them vanishes. Furthermore, under this identification, \(\pip(\x,\x)\) is freely generated by the \(m\) tori which are the boundaries of the closed components of \(T\); in particular, any Heegaard diagram for a tangle without closed components is admissible.
\end{lemma}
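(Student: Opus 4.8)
The plan is to deduce all three claims from the handle description \(M_T=(\Sigma_g\times[0,1])\cup(\text{2-handles along }\Ac\times\{0\})\cup(\text{2-handles along }\B\times\{1\})\) of definition~\ref{def:HDsfortangles}, the place where the \(\mathbb{Z}\)-homology-ball hypothesis on \(M\) will be used to control the interior. First I would set up the capping homomorphism \(\Phi\co\pi_2(\x,\x)\to H_2(M_T,\mathcal{Z}\cup\Aa;\mathbb{Z})\). For a periodic domain \(\phi\) we have \(d(d\phi\cap\B)=\x-\x=0\), and since \(\partial\phi\) is a cycle in the \(1\)-complex \(\A\cup\B\cup\mathcal{Z}\) the \(\alpha\)-boundary closes up as well; thus \(\partial\phi\) consists only of full \(\alpha\)- and \(\beta\)-circles together with full \(\alpha\)-arcs and arcs of \(\mathcal{Z}\). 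Realising \(\phi\) on \(\Sigma_g\times\{\tfrac12\}\), capping its \(\beta\)-circles by the cores of the upper \(2\)-handles and its \(\alpha\)-circles by the cores of the lower \(2\)-handles, and leaving the \(\alpha\)-arc and \(\mathcal{Z}\) boundary on \(\mathcal{Z}\cup\Aa\), produces a relative \(2\)-cycle and hence the class \(\Phi(\phi)\). An inverse is obtained by making a relative \(2\)-cycle transverse to the belt circles of the handles and projecting it back to \(\Sigma_g\). This is the relative analogue of the classical computation of Ozsváth and Szabó, and, in view of remark~\ref{Rem:TanglesAsSpecialBSMflds}, it is a special case of Zarev's bordered sutured framework~\cite{Zarev}.

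Next I would identify the generators. By definition~\ref{def:boundaryOfM} the elementary periodic domains are precisely the connected components of \(S_\B(\Sigma_g)\) and of \(S_{\Ac}(\Sigma_g)\); each is a \(2\)-chain whose \(\alpha\)- and \(\beta\)-boundary is a union of full circles (and, for the two discs, full \(\alpha\)-arcs), hence a periodic domain, and under \(\Phi\) it maps to the corresponding component of \(\partial M_T\) rel \(\mathcal{Z}\cup\Aa\). That these generate is where the homology-ball hypothesis enters: it forces \(H_2(M_T,\mathcal{Z}\cup\Aa)\) to be carried by \(\partial M_T\), where every relative cycle decomposes along \(\mathcal{Z}\cup\Aa\) into exactly these pieces. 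For the relation, note that the \(\B\)-annuli tile \(\Sigma_g\), as do the \(\Ac\)-annuli together with the two discs, giving the two expressions \(\sum(\B\text{-annuli})=[\Sigma_g]=\sum(\Ac\text{-annuli})+D_++D_-\) for the fundamental domain in \(D_\mathcal{H}\); with their orientations as relative cycles this is the statement that the signed total of all elementary periodic domains vanishes. That this is the \emph{only} relation I would check directly in \(D_\mathcal{H}\): each region lies in a unique \(\B\)-annulus and a unique \(\Ac\)-piece, so a vanishing integral combination forces the coefficient of a \(\B\)-piece to equal that of any \(\Ac\)-piece sharing a region with it, whereupon connectedness of \(\Sigma_g\) makes all coefficients equal. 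Combining this with \(\Phi\) gives \(\pi_2(\x,\x)\cong\mathbb{Z}^{\,n+2m+2}/\langle\text{sum}=0\rangle\).

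Finally, for the provincial part I would determine which integral combinations of elementary domains are disjoint from \(\mathcal{Z}\) by a purely local count at each boundary circle of \(\Sigma_g\). At the two meridians of a closed component \(K_i\) the only adjacent elementary domains are the pair \(P^\alpha_i,P^\beta_i\), and vanishing of the multiplicities there forces them to occur with equal coefficient, producing precisely the boundary torus \(\partial\nu(K_i)\); at the meridians of an open component the only adjacent pieces are that component's \(\B\)-annulus and the two discs \(D_\pm\), and vanishing forces all of their coefficients to be zero. Hence \(\pi^\partial_2(\x,\x)\) is freely generated by the \(m\) closed-component tori, and when \(m=0\) it is trivial, so there are no non-zero provincial periodic domains and the admissibility condition of definition~\ref{def:HDtoCFTbasics} holds automatically.

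The main obstacle is the middle step: rather than the formal capping isomorphism or the bookkeeping at \(\mathcal{Z}\), the delicate point is establishing that the elementary domains really \emph{span} \(\pi_2(\x,\x)\) and that the tiling relation is their only relation. This is exactly where the homology-ball hypothesis and the global combinatorics of the diagram must be combined, and I would expect to spend most of the effort verifying that \(H_2(M_T,\mathcal{Z}\cup\Aa)\) is carried by \(\partial M_T\) (equivalently, computing its rank to be \(n+2m+1\) via the long exact sequence of the pair); the first and last steps are then a standard argument and local bookkeeping, respectively.
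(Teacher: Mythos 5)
Your route is viable, but it is genuinely different from the paper's, so let me compare them. You construct the standard Ozsv\'ath--Szab\'o-style capping isomorphism onto $H_2(M_T,\mathcal{Z}\cup\Aa;\mathbb{Z})$ (your description of $\partial\phi$ and of the inverse via transversality to the cocores is fine) and then propose to compute that relative group directly. The paper instead argues homotopy-theoretically: it contracts every boundary component of $\Sigma_g$ to a point, so that the $\alpha$-arcs close up into a single circle $\alpha^\ast$, identifies $\pi_2(\x,\x)$ with the subgroup of $H_2(\overline{\Sigma}_g,\overline{\A}\cup\B)$ of domains with closed $\B$-boundary, rewrites this as $H_2$ of the complex obtained by attaching $2$-cells along all curves, and recognises that complex as a deformation retract of $M_T$ with $2$-handles attached along $\mathcal{Z}$ and along the fixed circle in $\partial M$ --- that is, of $M$ with $(n+2m+1)$ embedded balls removed. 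Since $H_2(M;\mathbb{Z})=0$, the second homology of this space is freely generated by the boundary spheres of the removed balls, and the elementary periodic domains map exactly onto these spheres, with one disc giving $\partial M$; this yields both the generation statement and the single relation at once. The payoff of this detour is precisely the point you flagged as your main obstacle: generation comes for free, because $H_2$ of a homology ball minus balls is manifestly carried by the removed spheres.

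In your plan that step is the one real soft spot. Your parenthetical claim that being ``carried by $\partial M_T$'' is \emph{equivalent} to computing the rank of $H_2(M_T,\mathcal{Z}\cup\Aa)$ to be $n+2m+1$ is not right as stated: even if the subgroup generated by the elementary domains has full rank, it could a priori sit inside $H_2(M_T,\mathcal{Z}\cup\Aa)$ with finite index. You can close this within your framework as follows: by the exact sequence of the triple $(M_T,\partial M_T,\mathcal{Z}\cup\Aa)$, the cokernel of $H_2(\partial M_T,\mathcal{Z}\cup\Aa)\to H_2(M_T,\mathcal{Z}\cup\Aa)$ injects into $H_2(M_T,\partial M_T)\cong H^1(M_T)$, which is torsion-free; your rank computation (this is where $H_1(M)=0=H_2(M)$ enters, to pin down $H_1(M_T)$ and $H_2(M_T)$) shows this cokernel is torsion, hence zero; and by excision $H_2(\partial M_T,\mathcal{Z}\cup\Aa)$ is the free group on the $n+2m+2$ pieces of $\partial M_T$ cut along $\mathcal{Z}\cup\Aa$, which are exactly the elementary domains. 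With that supplement your argument goes through. Your remaining steps agree in substance with the paper: the uniqueness of the relation via region-by-region coefficient bookkeeping, and the analysis of $\pi_2^\partial(\x,\x)$ by local multiplicities at $\mathcal{Z}$. One detail in the latter should be made explicit: since coefficients in an expansion by elementary domains are only well defined up to the global relation, you must first normalise (the paper sets the coefficient of one disc to zero) before the local vanishing at $\mathcal{Z}$ forces the open-component annuli and discs to have coefficient zero and the two annuli of each closed component to combine into its boundary torus; as literally written, ``vanishing forces all of their coefficients to be zero'' fails for representatives of the zero domain such as the sum of all elementary domains.
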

\begin{proof}
First, we contract each boundary component of $\Sigma_g$ such that we obtain a closed surface $\overline{\Sigma}_g$. On this surface, there are still the $\beta$-circles and $\alpha$-circles, but the $\alpha$-arcs have become a single $\alpha$-circle $\alpha^\ast$. Let us write $\overline{\A}=\Ac\cup\{\alpha^\ast\}$. \(\pi_2(\x,\x)\) can then be expressed as the subgroup of $H_2(\overline{\Sigma}_g,\overline{\A}\cup\B)$ consisting of all domains $\phi$ satisfying $\partial(\partial\phi\cap\B)=0$. This subgroup can be rewritten as the second homology of $\overline{\Sigma}_g\cup_{\overline{\A}\cup\B}\{\text{2-cells}\}$, which is a deformation retract of the tangle complement $M_T$ with 2-handles attached along each component of $\Gamma$ and ${\red S^1} \subset\partial M$. This, in turn, is just another description of $M$ with $(n+2m+1)$ embedded 3-balls removed. Since $H_2(M;\mathbb{Z})=0$, the homology is freely generated by the boundaries of these 3-balls. Each elementary periodic domain now corresponds to such a generator, except for one disc, which is the boundary of $M$. This proves the first two statements.

For the third claim, let us write a given periodic domain in~$\pip(\x,\x)$ as a linear combination of the elementary periodic domains such that the coefficient of one of the two discs, say, is zero. Then the coefficients of the two annuli of each closed component must be the same and all other coefficients must be zero. In fact, the sum of the two annuli for each closed component form a basis of $\pip(\x,\x)$.
\end{proof}
\begin{lemma}\label{lem:pixynonempty}
\(\pi_2(\x,\y)\) is non-empty for all pairs \((\x,\y)\in\mathcal{G}^2\).
 \end{lemma}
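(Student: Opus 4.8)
The plan is to express the non-emptiness of $\pi_2(\x,\y)$ as the vanishing of a homological obstruction, and then to derive that vanishing from the hypothesis that $M$ is a $\mathbb{Z}$-homology ball, reusing the contracted-surface computation from the proof of lemma~\ref{lem:noadmissibilityissues}. First I would collapse each boundary component of $\Sigma_g$ to a point, as in that proof, so that the $\alpha$-arcs close up into a single circle $\alpha^\ast$ and $\overline{\A}=\Ac\cup\{\alpha^\ast\}$. A domain $\phi$ is then a class in $H_2(\overline{\Sigma}_g,\overline{\A}\cup\B)$, and $\phi\in\pi_2(\x,\y)$ exactly when $d(d\phi\cap\B)=\x-\y$.

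Next I would construct a difference cycle. Every generator meets each $\alpha$- and each $\beta$-circle exactly once, and its remaining $n-1$ points lie on the arcs, which after contraction all sit on $\alpha^\ast$. Hence I can choose a $1$-chain $b$ supported on $\B$ with $db=\y-\x$ (one sub-arc of each $\beta$-circle joining the point of $\y$ to the point of $\x$), and a $1$-chain $a$ supported on $\overline{\A}$ with $da=\x-\y$ (one sub-arc of each $\alpha$-circle, together with a $1$-chain on $\alpha^\ast$ realising the difference of the arc points of $\x$ and $\y$). Then $c:=a+b$ is a $1$-cycle on $\overline{\Sigma}_g$.

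The key step is the standard observation that $\pi_2(\x,\y)$ is non-empty as soon as $[c]$ vanishes in $H_1(W)$, where $W$ is the $2$-complex obtained from $\overline{\Sigma}_g$ by attaching a $2$-cell along every curve of $\overline{\A}\cup\B$. Indeed, if $c$ bounds in $W$, then $c=d\phi+\sum_j n_j\gamma_j$ for some region $2$-chain $\phi$ and closed curves $\gamma_j\in\overline{\A}\cup\B$; intersecting with $\B$ and taking boundaries kills the closed $\beta$-curves and yields $d(d\phi\cap\B)=db=\y-\x$, so that $-\phi\in\pi_2(\x,\y)$. Finally, as shown in the proof of lemma~\ref{lem:noadmissibilityissues}, $W$ is a deformation retract of $M$ with $(n+2m+1)$ embedded $3$-balls removed; since removing balls from a connected $3$-manifold leaves $H_1$ unchanged and $M$ is a $\mathbb{Z}$-homology ball, we get $H_1(W)=H_1(M)=0$, so in particular $[c]=0$.

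I expect the only delicate point to be the bookkeeping in building $a$ along $\alpha^\ast$ when $s(\x)\neq s(\y)$: one must check that the arc points of $\x$ and of $\y$ each number $n-1$, so that their difference is a degree-zero $0$-chain on the circle $\alpha^\ast$ and therefore bounds a $1$-chain there. Everything else is the homological input already supplied by lemma~\ref{lem:noadmissibilityissues}, so the proof should be short.
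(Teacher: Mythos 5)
Your proposal is correct and is essentially the paper's own argument: both proofs construct a difference 1-cycle supported on the curve system whose $\B$-part has boundary $\y-\x$, and both use the $\mathbb{Z}$-homology-ball hypothesis to conclude that, after subtracting suitable multiples of full $\alpha$-, $\beta$- (and $\mathcal{Z}$-) curves, this cycle bounds a 2-chain, which is then the desired domain. The only difference is packaging: the paper stays on the punctured surface $\Sigma_g$ and corrects the cycle in two steps (first adding $\mathcal{Z}$-cycles to kill its class in $H_1(M_T)$, then $\alpha$- and $\beta$-cycles to kill it in $H_1(\Sigma_g)$), whereas you contract the boundary and express the whole obstruction as a single class in $H_1(W)$, whose vanishing you extract from the identification already established in the proof of lemma~\ref{lem:noadmissibilityissues}.
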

\begin{proof}
For any pair $(\x,\y)\in\mathcal{G}^2$, there exists a 1-cycle $\gamma$ in $C_1(\A\cup\B\cup\Gamma)$ such that $\partial(\gamma\cap\B)=\x-\y$. Indeed: First, we choose a 1-chain on $C_1(\B)$ with this property. Then, since there is exactly one intersection point on every $\alpha$-circle for each $\x$ and $\y$, we can add 1-chains in $C_1(\Ac)$ such that the boundary of the new 1-chain lies on the $\alpha$-arcs only. But since $\gamma$ is allowed to have $\Gamma$-components, we can get rid of these intersection points, too, and obtain our cycle $\gamma$.

Next, we can add $\Gamma$-cycles to $\gamma$ such that the resulting 1-cycle is 0 in $H_1(M_T)$. Adding $\alpha$- and $\beta$-cycles gives us another 1-cycle $\gamma'$ which is 0 in $H_1(\Sigma_g)$ and also satisfies $\partial(\gamma'\cap\B)=\x-\y$.
\end{proof}
Our next goal is to define a relative Alexander grading on generators. We do this by counting $\Gamma$-components of domains which connect two generators. 
In the following, let us denote the set of circles in $\Gamma$ which meet the $\alpha$-curves by $\Gamma^\alpha$.

\begin{figure}[t]
	\setlength\mathsurround{0pt}
	$$
	\begin{tikzcd}[column sep=0.63cm,row sep=1.2cm]    
	D_\mathcal{H}\arrow{r}{\partial} &H_1(\A\cup\B\cup\Gamma) \arrow[color=white]{dr}[description]{\quad\textcolor{black}{\square}}
	\arrow{r}{\iota}\arrow{d}{f} & H_1(\Sigma_g)\arrow[color=white,near start]{drr}[description]{\textcolor{black}{\square}}\arrow{d}{\cong}\arrow{rr}{j}& &H_1(\Sigma_g)/\langle\Ac,\B\rangle&\hspace{-0.74cm}\cong H_1(M_T)\\
	& H_1((\A\cup\B\cup\Gamma)^f) \arrow{r}{\iota^f}\arrow[swap]{d}{\cong}& H_1(\Sigma_g^f)\arrow[swap,bend right=5]{urr}{j^{f}}\arrow[color=white, very near start]{drrr}[description]{\textcolor{black}{\square}}&&\phantom{X}\\
	& H_1(\A^f\cup\B)\oplus H_1(\Gamma^f)\arrow{r}{p_\Gamma}& H_1(\Gamma^f)\arrow[bend right=35,swap,start anchor=east,end anchor=south]{uurr}{k=j^f\circ i}\arrow{u}{i}&&&\phantom{X}
	\end{tikzcd}
	$$
	\caption{Diagram for the definition of $A^f$ in definition~\ref{def:AlexGradingOnDomains}. The maps $\iota$, $\iota^f$, $i$ and $j$ are induced by inclusions, and $p_\Gamma$ is the projection onto the second summand.}\label{fig:AlexgradingOnDomains}
\end{figure}

\begin{definition}\label{def:AlexGradingOnDomains}
By definition, the endpoints of the $\alpha$-arcs divide each circle in $\Gamma^\alpha$ into two components. In the following, we will suggestively call them the front and the back component, but eventually it will not matter which is which. Let $\Sigma_g^f$, $(\A\cup\B\cup\Gamma)^f$ and $\A^f$ denote the spaces obtained from $\Sigma_g$, $(\A\cup\B\cup\Gamma)$ and $\A$ respectively by contracting the back component of each circle in $\Gamma^\alpha$ to a point. 
Note that $\B\cap\Gamma=\emptyset$, $\Ac\cap\Gamma=\emptyset$ and that the images of $\alpha$-arcs become a single circle in $\A^f$. Let $f\co(\A\cup\B\cup\Gamma)\rightarrow(\A\cup\B\cup\Gamma)^f$ be the quotient map and $\partial\co D_\mathcal{H}\rightarrow H_1(\A\cup\B\cup\Gamma)$ the boundary map of the long exact sequence of the pair $(\Sigma_g,\A\cup\B\cup\Gamma)$. Now, we consider the diagram from figure~\ref{fig:AlexgradingOnDomains} and define 
$$A^f\co D_\mathcal{H}\rightarrow H_1(M_T)$$
as the composition $k\circ p_\Gamma\circ f\circ\partial$. 
Similarly, by contracting the front components of $\Gamma^\alpha$, we obtain a homomorphism
$$A^b\co D_\mathcal{H}\rightarrow H_1(M_T).$$
Finally, note that the orientation of~$T$ induces an orientation of the meridians using the right-hand rule,  which gives rise to a canonical identification $H_1(M_T)\cong \mathbb{Z}^{n+m}$.
\end{definition}
\begin{lemma}\label{lem:AlexFconstant}
\(A^f\) and \(A^b\) are constant on \(\pi_2(\x,\y)\) for each pair \((\x,\y)\in\mathcal{G}^2\).
\end{lemma}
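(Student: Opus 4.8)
The plan is to exploit that \(A^f\) is a homomorphism of Abelian groups, so that its behaviour on \(\pi_2(\x,\y)\) is controlled by its behaviour on periodic domains. Concretely, if \(\phi,\phi'\in\pi_2(\x,\y)\) then \(\phi-\phi'\in\pi_2(\x,\x)\), and \(\pi_2(\x,\y)\) is non-empty by lemma~\ref{lem:pixynonempty}; hence \(A^f\) is constant on \(\pi_2(\x,\y)\) for every pair \((\x,\y)\) if and only if it vanishes identically on the periodic domains \(\pi_2(\x,\x)\) (which, as noted in definition~\ref{def:HDtoCFTbasics}, is independent of the base point). By lemma~\ref{lem:noadmissibilityissues} this group is generated by the elementary periodic domains of definition~\ref{def:boundaryOfM}, namely the \(n+2m\) annuli and the two discs, so it suffices to check \(A^f(P)=0\) for each such \(P\). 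The argument for \(A^b\) is identical after interchanging the roles of the front and back components of \(\mathcal{Z}^\alpha\), so I will only treat \(A^f\).

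The conceptual input is exactness. First, \(\operatorname{im}(\partial)=\ker(\iota)\) in the long exact sequence of \((\Sigma_g,\A\cup\B\cup\mathcal{Z})\), which is the top row of figure~\ref{fig:AlexgradingOnDomains}; combined with the commuting square in that figure this gives \(\iota^f(f(\partial\phi))=0\) for every domain \(\phi\). Secondly, and more to the point, I will use the long exact sequence of the pair \((M_T,\mathcal{Z})\): any elementary periodic domain whose boundary is supported entirely on \(\mathcal{Z}\) represents a class in \(H_2(M_T,\mathcal{Z})\), so the image of its meridional boundary in \(H_1(M_T)\) vanishes. This applies directly to all \(n+2m\) annuli and to the punctured sphere obtained as the sum of the two discs, since each of these has boundary consisting only of meridians in \(\mathcal{Z}\). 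On such a domain \(f\) either fixes the meridian (for the closed-component meridians, which lie outside \(\mathcal{Z}^\alpha\) because \(\Ac\cap\mathcal{Z}=\emptyset=\B\cap\mathcal{Z}\)) or replaces a full meridian by its front-circle, which is homologous to it; under the identification \(H_1(M_T)\cong\mathbb{Z}^{n+m}\) by meridians both descriptions give the same class. Hence \(A^f\) vanishes on every annulus and on the punctured sphere \(D_1+D_2\).

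It remains to treat the two discs \(D_1,D_2\) individually, and this is the one step requiring care, since a single disc also bounds along the \(\alpha\)-arcs and so is not a relative cycle of \((M_T,\mathcal{Z})\). Here the front/back contraction does the work: choosing \(D_2\) to be the disc whose meridional boundary consists of the \emph{back} components of the circles in \(\mathcal{Z}^\alpha\), these components are contracted to points by \(f\), so \(\operatorname{pr}_2(f(\partial D_2))=0\) and therefore \(A^f(D_2)=0\) tautologically. Combining this with the previous paragraph and additivity, \(A^f(D_1)=A^f(D_1+D_2)-A^f(D_2)=0\); for \(A^b\) the roles of \(D_1\) and \(D_2\) are exchanged. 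I expect the main obstacle to be purely bookkeeping: verifying that the meridional boundary of \(D_1\) (equivalently of the punctured sphere) carries the orientations induced from \(\partial M\), so that the two meridians belonging to each open component enter with opposite signs and their classes cancel in \(H_1(M_T)\) --- this is exactly the vanishing guaranteed abstractly by the exact sequence of \((M_T,\mathcal{Z})\), and one should confirm that the two ways of computing \(A^f(D_1)\), directly via the front circles and, through \(\iota^f(f(\partial\phi))=0\), via the \(\alpha\)-arc part, agree.
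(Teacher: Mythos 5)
Your proof is correct and takes essentially the same route as the paper: reduce to periodic domains by linearity, invoke lemma~\ref{lem:noadmissibilityissues} to reduce further to the elementary periodic domains, and check that $A^f$ (and symmetrically $A^b$) vanishes on each annulus and on the two discs. Your only real addition is rigour where the paper says ``obvious'': writing the front disc as the punctured sphere minus the back disc and justifying the cancellation of meridian pairs via the exact sequence of the pair $(M_T,\mathcal{Z})$ is precisely the bookkeeping behind the paper's remark that the annuli have cancelling $\mathcal{Z}$-components and the discs' $\mathcal{Z}$-components are the sums of all front/back components.
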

\begin{proof}
It suffices to show that $A^f$ and $A^b$ vanish on periodic domains. But this is obvious from the description of periodic domains in lemma \ref{lem:noadmissibilityissues} in terms of elementary periodic domains. The annuli have cancelling $\Gamma$-components of the corresponding tangle component and the $\Gamma$-components of the two discs are the sums of all front/back $\Gamma$-components.
\end{proof}

Combining lemmas \ref{lem:pixynonempty} and \ref{lem:AlexFconstant} enables us to define a relative grading on $\mathcal{G}$.
\begin{definition} \label{def:AlexgradingonGenerators}
The homomorphism $A=A^f+A^b\co D_\mathcal{H}\rightarrow \mathbb{Z}^{n+m}$ induces a relative grading $A\co\mathcal{G}\rightarrow \mathbb{Z}^{n+m}$ by setting $$A(\y)-A(\x)=A(\phi)\quad\text{for } \phi\in\pi_2(\x,\y).$$ 
We call $A$ the \textbf{Alexander grading}. 
\end{definition}

\begin{lemma}\label{lem:pidxynonempty}
 For each pair \((\x,\y)\in\mathcal{G}^2\), \(\pip(\x,\y)\) is non-empty iff \(\x\) and \(\y\) are in the same Alexander grading and belong to the same site.
 \end{lemma}
\begin{proof}
The only-if part is clear. The opposite direction follows from a refinement of the proof of \ref{lem:pixynonempty}: We can now get a 1-cycle~$\gamma$ in $C_1(\A\cup\B)$ such that $\partial(\gamma\cap\B)=\x-\y$, because the generators belong to the same site. This 1-cycle is already zero in $H_1(M_T)$, since the generators are in the same Alexander grading. Then we might have to add $\alpha$- and $\beta$-cycles as before and we are done.
\end{proof} 

\begin{Remark}\label{rem:AlexanderIdentification}
	The type D structure $\BSD(X)$ of a bordered sutured manifold~$X$ comes with an absolute grading by relative $\Spinc$-structures $\Spinc(X,\partial X\smallsetminus F)$, where $F$ is the surface parametrized by the arc diagram on the boundary of~$X$. In~\cite[section~4.5]{ZarevThesis}, Zarev identifies $\Spinc(X,\partial X\smallsetminus F)$ with an affine copy of $H^2(X,\partial X\smallsetminus F)$, which by Lefschetz duality is isomorphic to $H_1(X,F)$. Then, in~\cite[proof of proposition~4.5.2]{ZarevThesis}, Zarev associates with a pair of generators $\x,\y\in\mathcal{G}$ a homology class $\epsilon(\x,\y)=[a-b]\in H_1(X,F)$, where $a$ and $b$ are 1-chains on the $\alpha$- and $\beta$-curves, respectively, such that $\partial b=\y-\x$ and $\partial a=\y-\x+\z$ for some 0-chain $\z$ in $Z$.

	If the bordered sutured manifold in question is $M_T$, $F$ is a set of $2n$ strips connecting the sutures at the tangle ends. Then, by lemma~\ref{lem:pixynonempty}, we may choose a domain $\phi\in\pi_2(\x,\y)$ and take $a=\partial\phi\cap\A$ and $b=-\partial\phi\cap\B$. If $c=\partial\phi\cap\Gamma$, then $\epsilon(\x,\y)=[a-b]=[-c]\in H_1(M_T,F)$. 
	Like in definition~\ref{def:AlexGradingOnDomains}, we can define $(M_T^f,F^f)$ to be the pair of spaces obtained by contracting the front components of $\Gamma^\alpha$. Then the quotient map induces a homomorphism
	$$\tilde{A}^f\co H_1(M_T,F)\rightarrow H_1(M_T^f,F^f)\cong H_1(M_T)$$
	such that, with the same notation as in said definition, $$\tilde{A}^f(\epsilon(\x,\y))=\tilde{A}^f([-c])=\tilde{A}^f([-\partial\phi\cap\Gamma])=-k(p_\Gamma(f(\partial\phi)))=-A^f(\phi).$$
	Similarly, we can define a homomorphism
	$$\tilde{A}^b\co H_1(M_T,F)\rightarrow H_1(M_T^b,F^b)\cong H_1(M_T)$$
	such that $\tilde{A}^b(\epsilon(\x,\y))=-A^b(\phi)$. Thus, we may write our Alexander grading as a quotient of the relative $H_1(M_T,F)$-grading via the homomorphism $-(\tilde{A}^f+\tilde{A}^b)$.
\end{Remark}
\begin{lemma}
	The differential on~$\CFT(\mathcal{H}_T,s)$ preserves the Alexander grading.
\end{lemma}
\begin{proof}
	The sum in~\eqref{eq:differential} on page~\pageref{eq:differential} is over domains in $\pip(\x,\y)$, ie those domains that avoid~$\Gamma$, so the Alexander grading of any such domain vanishes.
\end{proof}

Next, we describe a second grading on $\CFT(\mathcal{H}_T)$, using the Maslov index $\mu_{\x,\y}$, which already appeared in the definition of the differential on $\CFT(\mathcal{H}_T)$. The Maslov index plays the role of the formal dimension of the moduli space $\mathcal{M}(\x,\y;\phi)$ of holomorphic curves in the homology class $\phi$ connecting two generators $\x$ and $\y$. The moduli space $\mathcal{M}(\x,\y;\phi)$ comes with a natural $\mathbb{R}$-action. If the expected dimension of $\mathcal{M}(\x,\y;\phi)$ is equal to 1, ie $\mu_{\x,\y}(\phi)=1$, the quotient of the moduli space by this $\mathbb{R}$-action is just a set of points; this is the set $\widehat{\mathcal{M}}(\x,\y;\phi)$ appearing in the definition of the differential from equation~\eqref{eq:differential}.

The Maslov index can be computed combinatorially, as shown in \cite[corollary~4.10]{LipshitzCyl}. We will take this combinatorial formula as a definition.
\begin{definition}\label{def:MaslovIndex}
Let $\phi\in\pi_2(\x,\y)$ for some $\x,\y\in\mathcal{G}$. We define the \textbf{Maslov index} by
$$
\mu_{\x,\y}(\phi)=e(\phi)+m_{\x}(\phi)+m_{\y}(\phi),
$$
where $e(\phi)$ is the Euler measure of $\phi$ and $m_{\x}(\phi)$ and $m_{\y}(\phi)$ are the multiplicities of $\phi$ at $\x$ and~$\y$, respectively. More explicitly, given a region $\psi$ of the Heegaard diagram, let $m_\psi(\phi)$ denote the coefficient of $\psi$ in $\phi$. Then 
$$e(\phi)=\sum_{\text{regions~} \psi}m_\psi(\phi)\left(\chi(\psi)-\tfrac{1}{4}\#\{\text{acute corners of }\psi\}+\tfrac{1}{4}\#\{\text{obtuse corners of }\psi\}\right).$$
Furthermore, for any $\x\in\mathcal{G}$, let
$$m_{\x}(\phi)=\sum_{x_i\in\x}m_{x_i}(\phi),$$ 
where $m_x(\phi)$ is the average of the $m_{\psi_i}(\phi)$ in the four quadrants $\psi_1,\dots,\psi_4$ at $x$. 
\end{definition}
\begin{lemma}\label{lem:muisadditive}
Given \(\phi\in\pi_2(\x,\y)\) and \(\psi\in\pi_2(\y,\z)\), \(\mu_{\x,\y}(\phi)+\mu_{\y,\z}(\psi)=\mu_{\x,\z}(\phi+\psi)\).
\end{lemma}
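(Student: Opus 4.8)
The plan is to verify that each of the three ingredients of $\mu(\phi)=e(\phi)+m_\x(\phi)+m_\y(\phi)$ behaves additively under the concatenation $\phi+\psi$, thereby reducing the statement to a single cross-term identity. First I would observe that both the Euler measure $e$ and, for any \emph{fixed} $0$-chain $P$ of intersection points, the total multiplicity $m_P$ are linear functionals on the free Abelian group $D_\mathcal{H}$ of domains: each is an integer combination of the region-coefficients $m_\psi(\phi)$, and these add under $\phi\mapsto\phi+\psi$. Hence $e(\phi+\psi)=e(\phi)+e(\psi)$ and $m_P(\phi+\psi)=m_P(\phi)+m_P(\psi)$ for every fixed $P$.

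Next I would carry out the bookkeeping. Since $\phi+\psi\in\pi_2(\x,\z)$, expanding $\mu(\phi+\psi)=e(\phi+\psi)+m_\x(\phi+\psi)+m_\z(\phi+\psi)$ and using the linearity above gives
$$\mu(\phi+\psi)=e(\phi)+e(\psi)+m_\x(\phi)+m_\x(\psi)+m_\z(\phi)+m_\z(\psi),$$
whereas $\mu(\phi)+\mu(\psi)=e(\phi)+e(\psi)+m_\x(\phi)+m_\y(\phi)+m_\y(\psi)+m_\z(\psi)$. Subtracting and cancelling the common terms, the desired additivity is \emph{equivalent} to the cross-term identity
$$m_\y(\phi)+m_\y(\psi)=m_\z(\phi)+m_\x(\psi),\qquad\text{i.e.}\qquad m_\y(\phi)-m_\z(\phi)=m_\x(\psi)-m_\y(\psi).$$

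The crux is this last identity, and it is genuinely non-formal: it compares the multiplicities of a single domain evaluated at two \emph{different} generators, so the fixed-$P$ additivity of the first step does not apply. The plan is to establish a comparison lemma for point multiplicities: for a domain $D$ and generators $P,Q$, the difference $m_P(D)-m_Q(D)$ is a signed count measuring how the boundary $1$-chain $\partial D$ separates the points of $P$ from those of $Q$, expressible as an intersection number between the $\alpha$- and $\beta$-parts of $\partial D$ and a path joining $Q$ to $P$ along the curves of $\A\cup\B$. I would then apply this to $D=\phi$, whose boundary interpolates between $\x$ and $\y$ and whose points are compared along a path from $\z$ to $\y$ furnished by $\partial\psi$, and symmetrically to $D=\psi$ with the points compared along $\partial\phi$. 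Both sides then reduce to the same intersection pairing between the boundary of $\phi$ and the boundary of $\psi$, and the identity follows. Since the combinatorial Maslov index is imported from Lipshitz's cylindrical reformulation \cite{LipshitzCyl} (cited just before definition~\ref{def:homgrading}), a more economical alternative is simply to invoke the additivity of that index under juxtaposition established there.

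I expect the comparison lemma to be the main obstacle; everything else is mechanical. The essential subtlety is precisely that $m_\bullet$ is evaluated at two distinct generators, so one must track how the boundaries of $\phi$ and $\psi$ interpolate between the point sets rather than appeal to naive linearity. Minor care is also needed with the one-quarter averages at corners when computing local multiplicities, and with any boundary components lying on $\mathcal{Z}$; the latter contribute nothing to the multiplicities at points of $\A\cap\B$ and therefore do not affect $m_\x$, $m_\y$ or $m_\z$.
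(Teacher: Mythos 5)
Your skeleton is the same as the paper's: additivity of the Euler measure, linearity of \(m_P\) for a \emph{fixed} point set \(P\), reduction to the cross-term identity \(m_{\y}(\phi)+m_{\y}(\psi)=m_{\x}(\psi)+m_{\z}(\phi)\), and a comparison lemma expressing differences of point multiplicities as boundary intersection numbers --- the latter is exactly Sarkar's theorem~3.1 (cited in the paper with \(\eta^1=\A\), \(\eta^2=\B\)), which gives \(m_{\z}(\phi)-m_{\y}(\phi)=\partial\phi\cdot\partial_{\B}(\psi)\) and \(m_{\y}(\psi)-m_{\x}(\psi)=\partial\psi\cdot\partial_{\B}(\phi)\). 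The gap is in your last step. The two applications of the comparison lemma do \emph{not} "reduce to the same intersection pairing between the boundary of \(\phi\) and the boundary of \(\psi\)": they produce two \emph{different} pairings, \(\partial\phi\cdot\partial_{\B}(\psi)\) and \(\partial\psi\cdot\partial_{\B}(\phi)\), and proving that these agree (equivalently, using antisymmetry of the jittered pairing, that \(\partial\psi\cdot\partial_{\B}(\phi)+\partial_{\B}(\psi)\cdot\partial\phi=0\)) is precisely the part of the argument you dismiss as mechanical. In the paper this is where the work happens: one splits the boundaries into \(\A\)-, \(\B\)- and \(\mathcal{Z}\)-parts, discards the \(\mathcal{Z}\)-terms because \(\B\cap\mathcal{Z}=\emptyset\), recognises the sum as \(\partial_{\A\cup\B}(\psi)\cdot\partial_{\A\cup\B}(\phi)\), and then \emph{contracts the boundary components of \(\Sigma_g\)} so that this becomes the intersection \(\partial\psi\cdot\partial\phi\) of two null-homologous \(1\)-cycles on a \emph{closed} surface, which vanishes. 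This is also where \(\mathcal{Z}\) genuinely enters: you are right that \(\mathcal{Z}\)-components do not affect \(m_{\x}\), \(m_{\y}\), \(m_{\z}\), but that is not the issue --- the boundaries of domains \emph{do} have \(\mathcal{Z}\)-components, and the pairing argument must dispose of them and close up the surface before the homological vanishing applies.

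Your proposed shortcut --- "simply invoke the additivity of that index under juxtaposition established in \cite{LipshitzCyl}" --- is not available here, and this is worth understanding. In this tangle setting the combinatorial formula is taken as the \emph{definition} of \(\mu\) precisely because \(\pi_2(\x,\y)\) contains domains crossing \(\mathcal{Z}\), for which no analytic index or cylindrical moduli theory is set up; Lipshitz's additivity concerns closed Heegaard surfaces. So a combinatorial proof in the style of \cite{Sarkar06}, adapted to the punctured surface as above, is not an optional alternative --- it is the proof.
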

\begin{proof}
This follows from basically the same arguments as \cite[theorems~3.1 and~3.3]{Sarkar06}. We give some details nonetheless. First of all, note that the Euler measure is additive. Hence, all we need to show is that 
$$m_{\x}(\phi)+m_{\y}(\phi)+m_{\y}(\psi)+m_{\z}(\psi)=m_{\x}(\phi+\psi)+m_{\z}(\phi+\psi).$$
This simplifies to 
$$m_{\y}(\phi)+m_{\y}(\psi)=m_{\x}(\psi)+m_{\z}(\phi).$$
Theorem 3.1 from \cite{Sarkar06} for $n=i=2$, $\eta^1=\A$ and $\eta^2=\B$ gives us
\begin{align*}
m_{\y}(\phi)-m_{\z}(\phi)&=\partial\phi \cdot\partial_{\B}(\psi)\quad\text{and similarly}\\
m_{\x}(\psi)-m_{\y}(\psi)&=\partial\psi \cdot\partial_{\B}(\phi),
\end{align*}
where the product $\cdot$ denotes the ``average'' intersection number from \cite{Sarkar06}. 
So we need to see that 
$$\partial\psi \cdot\partial_{\B}(\phi)+\partial_{\B}(\psi)\cdot\partial\phi=0. $$
The boundaries of the domains lie in $\A\cup\B\cup\Gamma$. However, $\B\cap\Gamma=\emptyset$, so the left-hand side equals $\partial_{\A}(\psi) \cdot\partial_{\B}(\phi)+\partial_{\B}(\psi)\cdot\partial_{\A}(\phi)=\partial_{\A\cup\B}(\psi) \cdot\partial_{\A\cup\B}(\phi)$. To see that this is zero, we modify the Heegaard surface by contracting all boundary components. Then the left-hand side is equal to $\partial(\psi) \cdot\partial(\phi)$, and this is indeed zero.
\end{proof}

\begin{lemma}\label{lem:homgradingconstant}
\(\mu_{\x,\y}\) is constant on \(\pi_2(\x,\y)\) for each pair \((\x,\y)\in\mathcal{G}^2\). 
\end{lemma}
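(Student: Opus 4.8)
The plan is to reduce the statement to a computation on a finite generating set of the group of periodic domains, using the additivity of $\mu$ established in Lemma~\ref{lem:muisadditive}.

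First I would exploit the fact that $\pi_2(\x,\y)$ is a torsor over the group $\pi_2(\x,\x)$ of periodic domains. Indeed, if $\phi,\phi'\in\pi_2(\x,\y)$, then $d(d(\phi-\phi')\cap\B)=(\x-\y)-(\x-\y)=0$, so $P:=\phi-\phi'\in\pi_2(\x,\x)$. Applying Lemma~\ref{lem:muisadditive} with $P\in\pi_2(\x,\x)$ and $\phi'\in\pi_2(\x,\y)$ gives $\mu(\phi)=\mu(P+\phi')=\mu(P)+\mu(\phi')$. Hence $\mu$ is constant on $\pi_2(\x,\y)$ for every pair as soon as $\mu$ vanishes on every periodic domain, and this vanishing is what I would prove.

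Next, Lemma~\ref{lem:muisadditive} shows that $\mu\colon\pi_2(\x,\x)\to\mathbb{Z}$ is a group homomorphism: taking $\y=\z=\x$ gives $\mu(P+P')=\mu(P)+\mu(P')$, and $\mu(0)=0$ since the empty domain has vanishing Euler measure and point multiplicities. By Lemma~\ref{lem:noadmissibilityissues}, $\pi_2(\x,\x)$ is generated by the elementary periodic domains of Definition~\ref{def:boundaryOfM}, namely the $(n+2m)$ annuli and the two discs. So it suffices to check that $\mu$ vanishes on each of these generators.

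Finally I would compute $\mu$ on the elementary periodic domains directly from Definition~\ref{def:homgrading}. The key simplification is that the boundary of a periodic domain consists of full $\alpha$- and $\beta$-circles together with arcs on $\mathcal{Z}$, so it has no corners at the generator points; consequently all the corner terms in the Euler measure drop out and $e$ reduces to a multiplicity-weighted sum of Euler characteristics. Each annulus has Euler characteristic $0$, and one checks that the point multiplicities $m_{\x}$ and $m_{\y}$ cancel, giving $\mu=0$. For the two discs I would either compute $e$, $m_{\x}$ and $m_{\y}$ by hand or contract the boundary circles of $\Sigma_g$ as in the proof of Lemma~\ref{lem:noadmissibilityissues}, passing to the closed surface $\overline{\Sigma}_g$ on which the $\alpha$-arcs become the single circle $\alpha^\ast$, where the Euler-measure and multiplicity contributions again cancel.

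I expect the main obstacle to be the explicit computation of $\mu$ on the two discs: unlike the annuli, they carry the $\alpha$-arcs along their boundary and interact with the generators through the intersection points lying on those arcs, so the careful bookkeeping of Euler measure and point multiplicities is the delicate step. The $\mathbb{Z}$-homology-ball hypothesis on $M$ enters precisely here, through Lemma~\ref{lem:noadmissibilityissues}, to guarantee that there are no periodic domains beyond the elementary ones, so that checking these generators is enough.
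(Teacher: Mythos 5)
Your scaffolding is exactly the paper's: additivity (lemma~\ref{lem:muisadditive}) reduces constancy of $\mu$ on $\pi_2(\x,\y)$ to the vanishing of $\mu$ on periodic domains, and lemma~\ref{lem:noadmissibilityissues} reduces that to the elementary periodic domains. The genuine gap is in your verification on the annuli, which is wrong as stated. An elementary periodic domain is \emph{not} the annulus (or disc) of definition~\ref{def:boundaryOfM} itself; it is the subsurface $\phi\subset\Sigma_g$ obtained from that annulus or disc by deleting the discs glued in during surgery. Its boundary therefore contains full $\alpha$- or $\beta$-circles, and $e(\phi)=\chi(\phi)$ equals $\chi(\text{annulus})=0$ \emph{minus} $1$ for each such circle meeting $\phi$ from one side and minus $2$ for each circle meeting it from both sides, so it is in general strictly negative. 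For instance, in the genus-$0$ diagram of the $1$-crossing tangle (figure~\ref{fig:HDfor1crossing}) each of the two ``annulus'' domains is a pair of pants, with $e=-1$. Likewise ``$m_{\x}$ and $m_{\y}$ cancel'' cannot happen: for a periodic domain $\x=\y$, so $m_{\x}(\phi)=m_{\y}(\phi)\geq0$, and these terms are strictly positive here, because every $\alpha$- and $\beta$-circle is occupied by exactly one point of $\y$, and each boundary circle of $\phi$ carries such a point, contributing $+1$ (one-sided) or $+2$ (two-sided) to $m_{\x}(\phi)+m_{\y}(\phi)=2m_{\y}(\phi)$. The actual mechanism --- which is the entire computational content of the paper's proof --- is that these two non-vanishing quantities cancel \emph{each other}: the Euler-measure deficit created by the boundary circles equals $2m_{\y}(\phi)$ exactly, so in the pair-of-pants example $\mu=-1+1=0$.

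For the discs, which you defer, the same bookkeeping is needed together with two ingredients your sketch does not supply: the disc has $4n$ corners where the $\alpha$-arcs meet $\mathcal{Z}$, so $e(\text{disc})=1-n$ rather than $1$; and each of the $(n-1)$ occupied $\alpha$-arcs contributes exactly $1$ to $2m_{\y}(\phi)$, since such an arc lies on the boundary of \emph{both} discs, so only two of the four quadrants at its generator point lie in $\phi$. Combined with the $\alpha$-circle cancellation as above, this gives $\mu=(1-n)+(n-1)=0$. Your alternative of contracting the boundary circles and computing on $\overline{\Sigma}_g$ does not rescue the argument either: contraction changes both the Euler characteristic of a domain and its corner structure (the disc's $4n$ corners collapse to $2n$), so the value of the combinatorial formula is not preserved; the paper uses contraction only to identify the group of periodic domains and to prove integrality of $\mu$, never to evaluate $\mu$ on them. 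In short, your reduction is correct and identical to the paper's, but the computation that constitutes the substance of the lemma is incorrect for the annuli and absent for the discs.
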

\begin{proof}
Applying the previous lemma to $\z=\y$, we see that it suffices to show that
$$\mu_{\y,\y}(\phi)=e(\phi)+2m_{\y}(\phi)$$
vanishes for all periodic domains $\phi\in\pi_2(\y,\y)$. In fact, we only need to show this for every domain $\phi$ which corresponds to an elementary periodic domain under the identification of lemma~\ref{lem:noadmissibilityissues}. If $\phi$ corresponds to an annulus which is a component of $S_{\B}(\Sigma_g)$, we may regard $\phi$ as a subsurface of $\Sigma_g\times\{1\}$, and the annulus is obtained by performing surgery along all $\beta$-circles contained in the interior of $\phi$ and attaching discs along those $\beta$-curves that lie on the boundary of $\phi$. Any other $\beta$-curves stay away from $\phi$. Performing surgery on the relevant $\beta$-circles increases the Euler measure by 2. However, each such $\beta$-circle is occupied by an intersection point of $\y$, and the contribution of this point to $2m_{\y}(\phi)$ is $2$, since both sides of this $\beta$-circle belong to $\phi$. Likewise, attaching a disc along those $\beta$-curves that lie on the boundary of $\phi$ increases the Euler measure by 1; this is cancelled by the contribution of the intersection point occupying this curve to $2m_{\y}(\phi)$, since $\phi$ lies only to one side of this curve. So we see that removing all intersection points of $\y$ and simultaneously performing surgery/attaching discs does not change the Maslov index. Therefore  $\mu_{\y,\y}(\phi)=e(\text{annulus})=0$.

We can argue similarly if $\phi$ corresponds to an annulus which is a component of $S_{\Ac}(\Sigma_g)$, noting that $m_{y}(\phi)=0$ for any intersection point $y$ of $\y$ which lies on the $\alpha$-arcs. Finally, if $\phi$ corresponds to one of the two discs of $S_{\Ac}(\Sigma_g)$, we see by the same argument as above that $\mu_{\y,\y}(\phi)$ is equal to the Euler measure of a disc with $4n$ corners plus 1 for each of the $(n-1)$ intersection points of $\y$ which lie on an $\alpha$-arc. Hence $\mu_{\y,\y}(\phi)=0$ in this case, too.
\end{proof}
Combining lemmas \ref{lem:muisadditive} and \ref{lem:homgradingconstant}, we can now define a relative grading on generators induced by the Maslov index, just as for the Alexander grading.
\begin{definition}
The \textbf{$\delta$-grading} on generators is a relative $\tfrac{1}{2}\mathbb{Z}$-grading defined by
$$\delta(\y)-\delta(\x)=\mu_{\x,\y}(\phi),$$
where $\phi\in\pi_2(\x,\y)$.
We also define a relative $\mathbb{Z}$-grading, the \textbf{homological grading}, by
$$h(\y)-h(\x)=h(\phi):=\tfrac{1}{2}\overline{A}(\phi)-\mu_{\x,\y}(\phi),$$
where $\phi\in\pi_2(\x,\y)$ and $\overline{A}$ is the composition of $A$ with the map $\mathbb{Z}^{n+m}\rightarrow\mathbb{Z}$ that adds all components. In short,
$$h=\tfrac{1}{2}\overline{A}-\delta.$$
Although we now have three different gradings $\overline{A}$, $\delta$ and $h$ on~\(\CFT(\mathcal{H}_T,s)\), we call their union the \textbf{bigrading}, since any one of them is determined by the other two.
\end{definition}

\begin{Remark}\label{rem:conventions2}
	When comparing these gradings with those in link Floer homology, note that we are using a Heegaard surface with punctures instead of marked points. For two-ended tangles, our conventions agree with those in \cite[section~3.1, equations~(3.2)--(3.4)]{BaldwinLevine} (up to a factor 2 in the Alexander grading), noting that tangle ends that point into the 3-manifold are represented by labels $X$ and outgoing ones by labels $O$. See also figure~\ref{fig:AlexCodesForOneCrossingsWithDelta}.
\end{Remark}

\begin{lemma}
	The homological grading is well-defined.
\end{lemma}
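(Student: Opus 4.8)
The real content of the statement is integrality, not well-definedness as such. Indeed, $A^r\co D_\mathcal{H}\to\mathbb{Z}$ is a group homomorphism which by Lemma~\ref{lem:AlexFconstant} is constant on each $\pi_2(\x,\y)$, and $\mu$ is additive (Lemma~\ref{lem:muisadditive}) and constant on each $\pi_2(\x,\y)$ (Lemma~\ref{lem:homgradingconstant}); hence $h=\tfrac12 A^r-\mu$ is additive and constant on each $\pi_2(\x,\y)$. Together with the non-emptiness of $\pi_2(\x,\y)$ (Lemma~\ref{lem:pixynonempty}), this already shows that $h$ descends to a well-defined relative $\tfrac12\mathbb{Z}$-grading on $\mathbb{T}$, exactly as $\delta$ and $\tfrac12 A^r$ do individually. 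The plan is therefore to prove the one remaining point asserted by the definition, namely that $h$ is a $\mathbb{Z}$-grading: that $h(\phi)\in\mathbb{Z}$ for every $\phi\in\pi_2(\x,\y)$, equivalently $2\mu(\phi)\equiv A^r(\phi)\pmod 2$, so that the half-integer parts of $\tfrac12 A^r$ and of $\delta=\mu$ cancel.

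Both sides of this congruence are additive in $\phi$ and vanish on periodic domains $\pi_2(\x,\x)$ (for $A^r$ by Lemma~\ref{lem:AlexFconstant}, for $\mu$ by the computation in the proof of Lemma~\ref{lem:homgradingconstant}). Hence it suffices to verify the congruence on any family of domains which, together with the periodic domains, generates $\bigsqcup_{\x,\y}\pi_2(\x,\y)$ under concatenation. Fixing a base generator and joining every other generator to it by a chain of \emph{elementary moves} — moving a single intersection point across one crossing within a fixed site, or shifting an occupied $\alpha$-arc to a neighbouring one when passing between sites — reduces the problem to the small domains realising these moves. That any two generators are so connected is the analogue of Kauffman's clock theorem already invoked in Lemma~\ref{lem:ExponentsMod2Agree}; I would either cite that connectivity or work directly in the building-block diagrams of Example~\ref{exa:HDforonecrossing}, since every tangle Heegaard diagram is assembled from the one-crossing piece of Figure~\ref{fig:HDfor1crossing} and the ladybug of Figure~\ref{fig:ladybug} glued along $\mathcal{Z}$, and the congruence is local in this sense.

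It then remains to check $\tfrac12 A^r(\phi)\equiv\mu(\phi)\pmod 1$ for each elementary domain $\phi$ by direct computation: evaluating $e(\phi)$, $m_\x(\phi)$ and $m_\y(\phi)$ from Definition~\ref{def:homgrading}, and reading off from Definition~\ref{def:AlexGradingOnDomains} that $A^r(\phi)$ is the signed total number of front and back $\mathcal{Z}$-components traversed by $\partial\phi$. I expect this parity bookkeeping to be the main obstacle. The half-integer part of $\mu(\phi)$ is produced solely by the corners of $\phi$ lying on the $\alpha$-arcs, through the quarter-integer corner terms of the Euler measure together with the averaged multiplicities $m_\x,m_\y$ at those corners; meanwhile the half-integer part of $\tfrac12 A^r(\phi)$ is governed by how $\partial\phi$ runs along the front and back components of the circles in $\mathcal{Z}^\alpha$. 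The crux is to show that an elementary move changes an occupied arc — and thereby crosses $\mathcal{Z}$ an odd number of times, contributing an odd amount to $A^r(\phi)$ — exactly when it contributes a genuine half-integer to $\mu(\phi)$, and that otherwise both contributions are integral.

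A clean way to organise this computation is to contract the back components of $\mathcal{Z}^\alpha$ as in the proof of Lemma~\ref{lem:noadmissibilityissues}, closing the $\alpha$-arcs up so that the boundary corner contributions to $e(\phi)$ disappear and $\mu$ is compared with an honest (integral) Maslov-type index on the resulting closed diagram; the discrepancy introduced by re-opening the arcs is then precisely the term $\tfrac12 A^r(\phi)$, which yields the claimed congruence and hence the integrality of $h$.
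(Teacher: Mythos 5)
Your closing paragraph is, in substance, the paper's entire proof, and it is the only part of your plan you actually need. The paper argues exactly as you do there: \(h\) is \emph{a priori} a relative \(\tfrac{1}{2}\mathbb{Z}\)-grading, and the content is the congruence \(2\mu(\phi)\equiv A^r(\phi) \bmod 2\); this follows by contracting the boundary components of \(\Sigma_g\) to obtain a closed surface (as in the proof of lemma~\ref{lem:muisadditive}) and invoking the fact that the Maslov index of a domain in a \emph{closed} Heegaard diagram is an integer, which the paper gets from Sarkar's alternative formula for \(\mu\) \cite[section~2]{Sarkar06}. Two corrections to your formulation of this step: first, the contraction that produces a closed surface collapses each \emph{whole} boundary circle to a point, as in the proofs of lemmas~\ref{lem:muisadditive} and~\ref{lem:noadmissibilityissues}; contracting only the back components of \(\mathcal{Z}^\alpha\), as you propose, closes up the \(\alpha\)-arcs but leaves a surface with boundary, so it does not put you in the closed setting where integrality is available. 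Second, you do not need the discrepancy between the two Maslov indices to be ``precisely'' \(\tfrac{1}{2}A^r(\phi)\); the congruence modulo \(1\) is all that integrality of \(h\) requires, and it is all the comparison naturally yields.

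The route you actually develop in the middle two paragraphs --- reduction to elementary domains via clock-theorem-style connectivity of generators --- is both unnecessary and the one place where your plan would run into genuine trouble. The lemma concerns an \emph{arbitrary} Heegaard diagram for \(T\) (any genus, any number of stabilisations), and the generators of such a diagram are not Kauffman states of a tangle diagram; the connectivity result invoked for lemma~\ref{lem:ExponentsMod2Agree} lives in the combinatorial world of the standard diagrams of example~\ref{exa:HDforonecrossing}. Reducing the general case to those standard diagrams would require knowing that the gradings are preserved under Heegaard moves, which the paper establishes only later (in the proof of theorem~\ref{thm:Eulercharagreeswithnabla}) and which presupposes that the gradings are well-defined on each diagram --- i.e.\ the very lemma at hand. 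So keep your last paragraph, replace the appeal to a generic ``Maslov-type index'' by the citation of Sarkar, contract full boundary circles rather than back arcs, and discard the elementary-moves scaffolding.
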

\begin{proof}
	We have claimed in the definition above that the homological grading is a relative $\mathbb{Z}$-grading, but \textit{a priori}, it is only a relative $\frac{1}{2}\mathbb{Z}$-grading. To prove the assertion, let $\phi\in\pi_2(\x,\y)$ be a domain connecting two generators $\x,\y\in\mathcal{G}$. Let us contract each boundary component of the Heegaard surface to a single point. We obtain a closed surface $\Sigma'$ with a certain number of $\beta$-curves and $(n-2)$ fewer $\alpha$-curves, where we regard the curve obtained from the $2n$ $\alpha$-arcs as a single new $\alpha$-circle $\alpha^\ast$, as in the proof of lemma~\ref{lem:noadmissibilityissues}. $\phi$ induces some domain $\phi'$ in this new diagram, and we can still regard $\x$ and $\y$ as tuples of intersection points, even though they do not define generators in the usual sense if $n\neq2$. We claim that $h(\phi)\equiv \mu_{\x,\y}(\phi')\mod 1$. Indeed, contracting a closed component missing the $\alpha$-arcs changes $\mu_{\x,\y}$ by an integer; the contribution of this component to $\tfrac{1}{2}\overline{A}(\phi)$ is an integer, too. Similarly, contracting a component which corresponds to a tangle end increases the Euler measure of each of the two adjacent regions by~$\tfrac{1}{2}$. Hence, if such a region has multiplicity~$m$ in~$\phi$, its contribution to $\mu_{\x,\y}$ increases by~$\tfrac{m}{2}$ while its contribution to $\tfrac{1}{2}\overline{A}(\phi)$ is~$\pm\tfrac{m}{2}$. 
	
	So it remains to show that $\mu_{\x,\y}(\phi')$ is an integer. If $n=2$, this follows from the fact that $\x$ and $\y$ are well-defined generators of a Heegaard diagram and $\mu_{\x,\y}(\phi')$ is equal to the expected dimension of the moduli space $\mathcal{M}(\x,\y;\phi')$. For the general case, we can adapt an argument from~\cite[proof of lemma~4.1]{LipshitzCyl}: Let us regard each region $r$ of the closed Heegaard diagram as an oriented surface $S_r$ with boundary. By adding the fundamental class of $H_2(\Sigma')$ sufficiently many times, we may assume without loss of generality that the multiplicity $m_r$ of each region $r$ in $\phi'$ is non-negative. Then, for each region $r$ in $\phi'$, consider the $m_r$-sheeted trivial cover of $S_r$. Each component of the boundary of these covering spaces minus the corners maps to a part of an $\alpha$- or $\beta$-curve in $\Sigma'$. We can now glue the covering spaces together along matching components to construct a surface $S$ and a map $S\rightarrow \Sigma'$. As in~\cite[proof of lemma~4.1]{LipshitzCyl}, we do this first for all components corresponding to $\alpha$-curves until no more glueing is possible, and then do the same for the $\beta$-curves. Thereby, we ensure that the corners of the surface $S$ are mapped bijectively to non-degenerate points of $\x$ and $\y$, ie points in $(\x\cup\y)\smallsetminus (\x\cap\y)$. Let us call a point in $S$ in the preimage of $\x$ or $\y$ a marked point. Then we may compute $\mu_{\x,\y}(\phi')$ as the sum of the Euler measure of $S$ and for each marked point $x$ on $S$
	$$
	m_x(S):=
	\begin{cases}
	\tfrac{1}{4} & \text{if $x$ is an acute corner of $S$,}\\
	\tfrac{3}{4} & \text{if $x$ is an obtuse corner of $S$,}\\
	\tfrac{1}{2} & \text{if $x$ is lies in the interior of the boundary of $S$,}\\
	1 &	\text{if $x$ lies in the interior of $S$,}
	\end{cases}
	$$
	where we count marked points corresponding to degenerate points twice. 
	The sum of the Euler measure with the sum of $m_x(S)$ over all corners $x$ of $S$ is an integer. Also, we may ignore all marked points corresponding to degenerate points as those points are not corners of $S$ by construction. So it remains to be seen that the number of marked points which lie in the interior of the boundary of $S$ is even. Let us consider each component of the boundary of $S$ minus the corners separately. By construction, each component corresponds to an $\alpha$- or $\beta$-curve, which is occupied by an even number of points in $\x$ and $\y$. So if a component is closed, it contains an even number of marked points. If a component is open then by construction, its two boundary points correspond to a pair of non-degenerate points in $\x$ and $\y$. Unless these two points lie on the special $\alpha$-curve $\alpha^\ast$ which was obtained from the $\alpha$-arcs in the original Heegaard diagram, this component contains an even number of marked points, too. 
	
	It remains to consider the open components corresponding to $\alpha^\ast$. We may assume without loss of generality that the image of each such component is an \textit{embedded} interval on~$\alpha^\ast$, since the number of points in $\x$ and $\y$ on~$\alpha^\ast$ is even. Consider two such intervals. Then, they are either disjoint or they intersect in one or two intervals. In all three cases, the number of endpoints of the two intervals that are contained in the interior of the other interval is either 0, 2 or 4; in particular, it is even. So the total number of marked points in the interior of all such components is even. 
\end{proof}

\begin{Remark}\label{rem:HomologicalIdentification}
	The type D structure $\BSD(X)$ of a bordered sutured manifold $X$ comes with a relative grading $\gru$ by a certain group $\Gru(\mathcal{Z})$, which can be identified with a $\frac{1}{2}\mathbb{Z}$-extension of $H_1(F)$, where $F$ is the surface parametrized by the arc diagram on $X$. In general, this group can be complicated, eg it need not be Abelian. For $X=M_T$ however, $F$ is just a collection of strips, so $H_1(F)$ vanishes. Hence, $\Gru(\mathcal{Z})=\frac{1}{2}\mathbb{Z}$. Moreover, the grading $\gru$ agrees with the unrefined grading $\gr$ in this particular case, and the definition of $\gr$ on domains agrees with our formula for the Maslov index~\cite[6.1.1]{ZarevThesis} up to a sign. We conclude that the grading $\gru$ agrees with our $\delta$-grading.
	Note however, that Zarev only defines $\gru$ for each $\Spinc$-structure separately. We have defined a \textit{uniform} relative $\frac{1}{2}\mathbb{Z}$-grading for all $\Spinc$-structures/Alexander gradings simultaneously. 
\end{Remark}

Let us now summarize the discussion in this and the preceding section.

\begin{theorem}\label{thm:HFTiswelldefandinvariant}
	Let $\mathcal{H}_T$ be a Heegaard diagram for an oriented tangle $T$ inside a $\mathbb{Z}$-homology 3-ball $M$ with spherical boundary and $s$~a~site of~$T$. Then \(\CFT(\mathcal{H}_T,s)\) carries an Alexander grading for each component of the tangle and a homological grading, each of which is a relative $\mathbb{Z}$-grading. Furthermore, the differential \eqref{eq:differential} is well-defined, preserves Alexander gradings and decreases the homological grading by 1. Finally, the bigraded chain homotopy type of \(\CFT(\mathcal{H}_T,s)\) is an invariant of the tangle~\(T\).
\end{theorem}
\begin{proof}
	In lemmas~\ref{lem:AlexFconstant} and~\ref{lem:homgradingconstant}, we showed that the gradings are well-defined. The fact that the differential from~\eqref{eq:differential} is well-defined and respects the gradings follows from the definition of \(\CFT(\mathcal{H}_T,s)\) as the bordered sutured invariant $\iota_s.\BSD(\mathcal{H}_T)$, see definition~\ref{def:HFTfromBSD}, and the identification of the gradings with the $\Spinc$- and $\Gru(\mathcal{Z})$-grading on~$\iota_s.\BSD(\mathcal{H}_T)$, see remarks~\ref{rem:AlexanderIdentification} and~\ref{rem:HomologicalIdentification}. 
	Invariance of \(\CFT(\mathcal{H}_T,s)\) as an Alexander graded chain complex and a $\delta$-graded complex in each fixed Alexander grading follows likewise. It only remains to show that the $\delta$-grading between generators of different Alexander gradings and sites is preserved by Heegaard moves. These are purely local arguments, so the proof of this fact follows from the same argument as for closed Heegaard surfaces. See for example~\cite[theorem~3.4]{Sarkar06} for invariance under isotopies; invariance under handleslides and stabilisation follows similarly. We leave the details to the reader.
\end{proof}

\begin{definition}\label{def:CFT}
	Let us write $\CFT_h(\mathcal{H}_T,s,a)$ for the subgroup of  $\CFT(\mathcal{H}_T,s)$ generated by those elements in $\mathcal{G}^s$ of Alexander grading $a\in\mathbb{Z}^{n+m}$ and homological grading $h$. We then define the \textbf{graded Euler characteristic of \(\CFT(\mathcal{H}_T,s)\)} as 
	$$
	\chi(\CFT(\mathcal{H}_T,s))=\sum_{h, a} (-1)^h\operatorname{rk}(\CFT_h(\mathcal{H}_T,s,a))\cdot t_{1\phantom{\vert}}^{a_{1\phantom{\vert}}}\!\!\cdots t_{\vert T\vert}^{a_{\vert T\vert}}\in\mathbb{Z}[t_1^{\pm1},\dots,t_{\vert  T\vert}^{\pm1}]
	$$
	which is well-defined up to multiplication by a unit in \(\mathbb{Z}[t_1^{\pm1},\dots,t_{\vert  T\vert}^{\pm1}]\).
\end{definition}

\begin{theorem}\label{thm:Eulercharagreeswithnabla}
	For an oriented tangle \(T\) in \(M=B^3\), \(\chi(\CFT(\mathcal{H}_T,s))\) coincides with
	$$\nabla_T^s(t_1,\dots,t_{\vert  T\vert})\cdot\prod(t_i-t_i^{-1}) $$
	up to multiplication by a unit, where the product is over all closed components of~\(T\). 	
\end{theorem}

\begin{proof}[Proof of lemma~\ref{lem:ExponentsMod2Agree}]\label{proof:ExponentsMod2Agree}
	Assuming the theorem above, we can now easily prove lemma~\ref{lem:ExponentsMod2Agree}:
	For any two generators of the same site $s$ of $T$, we can find a connecting domain $\phi$, such that $\partial\phi\cap\partial\Sigma_g$ consists of closed components only. Thus the difference of the Alexander gradings between the two generators is even for each color.  
\end{proof}

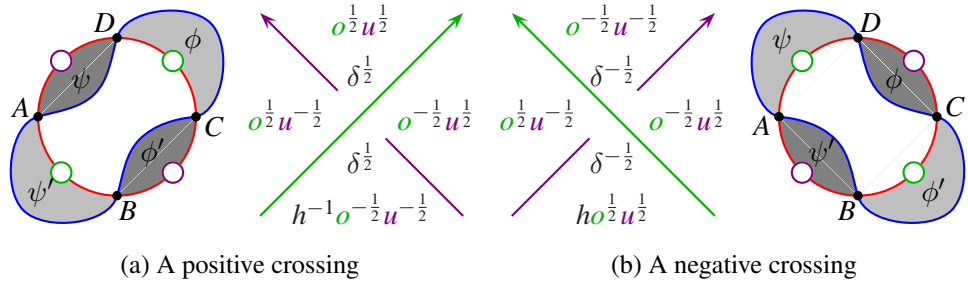
\begin{figure}[t]
	\centering
	\begin{subfigure}[b]{0.49\textwidth}\centering
		{\psset{unit=0.15}
			\begin{pspicture}(-10.3,-10.3)(10.3,10.3)
			
			\pscustom*[linecolor=lightgray,linewidth=0pt]{\psecurve(4,4)(7,0)(8.3,8.3)(0,7)(4,4)}
			\pscustom*[linecolor=white,linewidth=1pt]{
				\psarc(0,0){7}{0}{45}
				\psarc(0,0){7}{45}{90}
				\psarc(7,7){7}{180}{-90}}
			
			\pscustom*[linecolor=gray,linewidth=0pt]{
				\psarc(0,0){7}{-90}{-45}
				\psarc(0,0){7}{-45}{0}
				\psecurve(-0.2,-8)(0,-7)(2,-2)(7,0)(8,-0.2)}
			
			\pscustom*[linecolor=gray,linewidth=0pt]{
				\psarc(0,0){7}{90}{135}
				\psarc(0,0){7}{135}{180}
				\psecurve(0.2,8)(0,7)(-2,2)(-7,0)(-8,-0.2)}
			
			\pscustom*[linecolor=lightgray,linewidth=0pt]{\psecurve(-4,-4)(-7,0)(-8.3,-8.3)(0,-7)(-4,-4)}
			\pscustom*[linecolor=white,linewidth=1pt]{
				\psarc(0,0){7}{180}{-90}
				\psarc(-7,-7){7}{0}{90}}
			

			\psecurve[linecolor=blue](-0.2,-8)(0,-7)(2,-2)(7,0)(8,-0.2)
			\psecurve[linecolor=blue](4,4)(7,0)(8.3,8.3)(0,7)(4,4)
			\psecurve[linecolor=blue](0.2,8)(0,7)(-2,2)(-7,0)(-8,-0.2)
			\psecurve[linecolor=blue](-4,-4)(-7,0)(-8.3,-8.3)(0,-7)(-4,-4)
			
			\psarc[linecolor=red](0,0){7}{0}{360}
			
			\SpecialCoor
			\rput(7;45){\pscircle*[linecolor=white]{1}\pscircle[linecolor=darkgreen]{1}}
			\rput(7;135){\pscircle*[linecolor=white]{1}\pscircle[linecolor=violet]{1}}
			\rput(7;225){\pscircle*[linecolor=white]{1}\pscircle[linecolor=darkgreen]{1}}
			\rput(7;315){\pscircle*[linecolor=white]{1}\pscircle[linecolor=violet]{1}

			}
			\psdot(0,7)
			\psdot(7,0)
			\psdot(0,-7)
			\psdot(-7,0)
			
			\SpecialCoor
			\rput(9.5;45){$\phi$}
			\rput(4.5;135){$\psi$}
			\rput(4.5;-45){$\phi'$}
			\rput(9.5;-135){$\psi'$}
			
			\rput[br](0,7.5){$D$}
			\rput[tl](7.5,0){$C$}
			\rput[tl](0,-7.5){$B$}
			\rput[br](-7.5,0){$A$}
			\end{pspicture}}
		{\psset{unit=1.5}
			\begin{pspicture}(-1.05,-1.05)(1.05,1.05)
			\psline[linecolor=violet]{->}(0.9,-0.9)(-0.9,0.9)
			\pscircle*[linecolor=white](0,0){0.3}
			\psline[linecolor=darkgreen]{->}(-0.9,-0.9)(0.9,0.9)
			\uput{0.7}[90](0,0){$\textcolor{darkgreen}{o}^{\frac{1}{2}} \textcolor{violet}{u}^{\frac{1}{2}}$}
			\uput{0.3}[180](0,0){$\textcolor{darkgreen}{o}^{\frac{1}{2}} \textcolor{violet}{u}^{-\frac{1}{2}}$}
			\uput{0.7}[-90](0,0){$h^{-1}\textcolor{darkgreen}{o}^{-\frac{1}{2}} \textcolor{violet}{u}^{-\frac{1}{2}}$}
			\uput{0.3}[0](0,0){$\textcolor{darkgreen}{o}^{-\frac{1}{2}} \textcolor{violet}{u}^{\frac{1}{2}}$}
			\uput{0.25}[90](0,0){$\delta^{\frac{1}{2}}$}
			\uput{0.25}[-90](0,0){$\delta^{\frac{1}{2}}$}
			\end{pspicture}}
		\caption{A positive crossing}
	\end{subfigure}
	\begin{subfigure}[b]{0.49\textwidth}\centering
		{\psset{unit=1.5}
			\begin{pspicture}(-1.05,-1.05)(1.05,1.05)
			\psline[linecolor=violet]{->}(-0.9,-0.9)(0.9,0.9)
			\pscircle*[linecolor=white](0,0){0.3}
			
			\psline[linecolor=darkgreen]{->}(0.9,-0.9)(-0.9,0.9)
			\uput{0.7}[90](0,0){$\textcolor{darkgreen}{o}^{-\frac{1}{2}} \textcolor{violet}{u}^{-\frac{1}{2}}$}
			\uput{0.3}[180](0,0){$\textcolor{darkgreen}{o}^{\frac{1}{2}} \textcolor{violet}{u}^{-\frac{1}{2}}$}
			\uput{0.7}[-90](0,0){$h\textcolor{darkgreen}{o}^{\frac{1}{2}} \textcolor{violet}{u}^{\frac{1}{2}}$}
			\uput{0.3}[0](0,0){$\textcolor{darkgreen}{o}^{-\frac{1}{2}} \textcolor{violet}{u}^{\frac{1}{2}}$}
			\uput{0.25}[90](0,0){$\delta^{-\frac{1}{2}}$}
			\uput{0.25}[-90](0,0){$\delta^{-\frac{1}{2}}$}
			\end{pspicture}}
		{\psset{unit=0.15}
			\begin{pspicture}(-10.3,-10.3)(10.3,10.3)

			\pscustom*[linecolor=lightgray,linewidth=0pt]{\psecurve(-4,4)(-7,0)(-8.3,8.3)(0,7)(-4,4)}
			\pscustom*[linecolor=white,linewidth=1pt]{
				\psarcn(0,0){7}{180}{135}
				\psarcn(0,0){7}{135}{90}
				\psarc(-7,7){7}{-90}{0}}
			
			\pscustom*[linecolor=gray,linewidth=0pt]{
				\psarcn(0,0){7}{-90}{-135}
				\psarcn(0,0){7}{-135}{-180}
				\psecurve(0.2,-8)(0,-7)(-2,-2)(-7,0)(-8,-0.2)}
			
			\pscustom*[linecolor=gray,linewidth=0pt]{
				\psarcn(0,0){7}{90}{45}
				\psarcn(0,0){7}{45}{0}
				\psecurve(-0.2,8)(0,7)(2,2)(7,0)(8,-0.2)}
			
			\pscustom*[linecolor=lightgray,linewidth=0pt]{\psecurve(4,-4)(7,0)(8.3,-8.3)(0,-7)(4,-4)}
			\pscustom*[linecolor=white,linewidth=1pt]{
				\psarcn(0,0){7}{0}{-45}
				\psarcn(0,0){7}{-45}{-90}
				\psarc(7,-7){7}{90}{180}}
			

			\psecurve[linecolor=blue](0.2,-8)(0,-7)(-2,-2)(-7,0)(-8,-0.2)
			\psecurve[linecolor=blue](-4,4)(-7,0)(-8.3,8.3)(0,7)(-4,4)
			\psecurve[linecolor=blue](-0.2,8)(0,7)(2,2)(7,0)(8,-0.2)
			\psecurve[linecolor=blue](4,-4)(7,0)(8.3,-8.3)(0,-7)(4,-4)
			
			\psarc[linecolor=red](0,0){7}{0}{360}
			
			\SpecialCoor
			\rput(7;45){\pscircle*[linecolor=white]{1}\pscircle[linecolor=violet]{1}}
			\rput(7;135){\pscircle*[linecolor=white]{1}\pscircle[linecolor=darkgreen]{1}}
			\rput(7;225){\pscircle*[linecolor=white]{1}\pscircle[linecolor=violet]{1}}
			\rput(7;315){\pscircle*[linecolor=white]{1}\pscircle[linecolor=darkgreen]{1}

			}
			\psdot(0,7)
			\psdot(7,0)
			\psdot(0,-7)
			\psdot(-7,0)
			
			\SpecialCoor
			\rput(4.5;45){$\phi$}
			\rput(9.5;135){$\psi$}
			\rput(9.5;-45){$\phi'$}
			\rput(4.5;-135){$\psi'$}
			
			\rput[bl](0,7.5){$D$}
			\rput[bl](7.5,0){$C$}
			\rput[tr](0,-7.5){$B$}
			\rput[tr](-7.5,0){$A$}
			
			\end{pspicture}}
		\caption{A negative crossing}
	\end{subfigure}
	\caption{The gradings of the generators of the two 1-crossing tangles. The over-strand is coloured by $\textcolor{darkgreen}{o}$ and the under-strand by $\textcolor{violet}{u}$. Compare this to the Alexander codes from figure~\ref{figAlexCodesForNabla}. Our conventions agree with~\cite[figure~5 and~6]{OSrescube} and \cite[figure~11]{BaldwinLevine}, up to a factor 2 in the Alexander grading.}\label{fig:AlexCodesForOneCrossingsWithDelta}
\end{figure}
\begin{proof}[Proof of theorem~\ref{thm:Eulercharagreeswithnabla}]
	We first calculate the gradings of the generators for the 1-crossing diagrams, see figure~\ref{fig:AlexCodesForOneCrossingsWithDelta}.
	In each case, we have four connecting domains $\psi$, $\phi$, $\psi'$ and~$\phi'$. The $\delta$-grading of all these domains is $+\frac{1}{2}$. This gives us the correct relative $\delta$-grading on generators, noting that the normal vector field of the Heegaard diagram, determined by the right-hand rule, points into the plane. (For example, for the positive crossing, $\psi$ is in~$\pi_2(A,D)$ and the $\delta$-grading increases along this domain by~$+\frac{1}{2}$.) Using the right-hand rule convention from definition~\ref{def:AlexGradingOnDomains}, we similarly obtain the correct relative Alexander gradings. This determines the homological grading. 
	
	For a general tangle in $B^3$, we can consider the Heegaard diagram induced by a tangle diagram as discussed in example~\ref{exa:HDforonecrossing}. Suppose there are no closed components. Then there is an obvious 1:1-correspondence between Kauffman states and generators: intersection points of generators correspond to markers of Kauffman states, $\beta$-curves to crossings and $\alpha$-curves to regions of the tangle diagram. 
	So it suffices to check that the gradings agree on both sides of this correspondence, up to an overall shift. Locally, at the crossings, we have already done this. For Kauffman states, the gradings can be computed as the sum of the local gradings. For generators in our chosen Heegaard diagram, the same is true, which can be checked on the level of domains. For the Alexander grading, this is obvious. For the Maslov grading, it follows from the fact that the Euler measure has this property, too, which is an elementary exercise. 
	
	If the tangle has a closed component, we need to insert a ladybug into the Heegaard diagram for each such component, see figure~\ref{fig:ladybug}; this multiplies the number of generators by two, since there are two intersection points of the $\alpha$-circle in a ladybug. It is straightforward to compute the grading difference between corresponding generators of the two intersection points: the $\delta$-gradings agree and the Alexander gradings differ by~2. Hence, we get an extra factor $(t_i-t_i^{-1})$ in the decategorified invariant.
\end{proof}

\begin{Remark}
	For tangles in $B^3$ without closed components, the Mathematica program \cite{APT.m} explicitly computes the generators of the categorified tangle invariant from a standard Heegaard diagram as in the proof above. Note that the relative gradings of the generators computed by the program are lifted to absolute ones in such a way that they agree with the gradings of the corresponding Kauffman states. For tangles in $B^3$ with closed components, we need to introduce a factor $(t_i+h^{-1}t_i^{-1})$ for each closed component. 
\end{Remark}

\section{\texorpdfstring{Basic properties of $\HFT$}{Basic properties of HFT}}\label{sec:Properties}

\begin{wrapfigure}{r}{0.3333\textwidth}
	\centering
	\psset{unit=0.2}
	\begin{pspicture}(-10,-10)(10,10)
	\pscircle(0,0){10}
	\psarc[linecolor=red](0,0){7}{45}{135}
	\psarc[linecolor=red,linestyle=dotted](0,0){7}{142}{215}
	\psarc[linecolor=red,linestyle=dotted](0,0){7}{235}{305}
	\psarc[linecolor=red,linestyle=dotted](0,0){7}{325}{398}
	
	\pscircle[linecolor=lightgray](0,0){4}
	
	\psline[linecolor=white,linewidth=4pt](3;45)(6.3;45)
	\psline[linecolor=white,linewidth=4pt](3;135)(6.3;135)
	\psline[linecolor=white,linewidth=4pt](3;-135)(6.3;-135)
	\psline[linecolor=white,linewidth=4pt](3;-45)(6.3;-45)
	
	\psline[linecolor=gray](3;45)(6.3;45)
	\psline[linecolor=gray](3;135)(6.3;135)
	\psline[linecolor=gray](3;-135)(6.3;-135)
	\psline[linecolor=gray](3;-45)(6.3;-45)

	\rput{-135}(7;-135){\psellipse[linecolor=darkgreen](0,0)(0.5,1)}
	\rput{-45}(7;-45){\psellipse[linecolor=darkgreen](0,0)(0.5,1)}
	
	\pscustom[linecolor=darkgreen]{
		\psarc(0,0){6.5}{45}{135}
		\psecurve(7;130)(6.5;135)(7;140)(7.5;135)(7;130)
		\psarcn(0,0){7.5}{135}{45}
		\psecurve(7;50)(7.5;45)(7;40)(6.5;45)(7;50)
	}

	\rput(-5.5,0){$a$}
	\rput(0,-5.5){$b$}
	\rput(5.5,0){$c$}
	\rput(0,5){$d$}
	
	\rput(0,0){\textcolor{darkgray}{$T$}}
	
	\end{pspicture}
	\caption{The set of sutures (\textcolor{darkgreen}{green} curves) on $M_T^s$ for a 4-ended tangle $T$ and $s=d$. Any closed components of $T$ get two meridional sutures as in the case of knots and links.}\label{fig:HFTviaSFHsutmfd}
\end{wrapfigure}
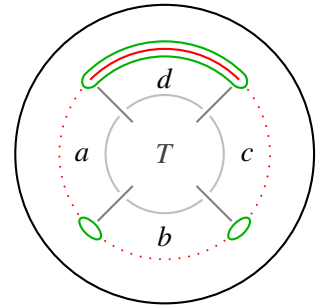

In this section, $M$ still denotes a $\mathbb{Z}$-homology 3-ball with spherical boundary, $T$~is an oriented tangle in~$M$ and $s$ a site of $T$. 
Before discussing basic properties of our homological invariant $\HFT$, let us interpret it in terms of sutured Floer homology $\SFH$. $\SFH$ was originally developed by Juhász in~\cite{Juhasz} as a generalisation of the hat version of Heegaard Floer homology of closed three manifolds and links therein to balanced sutured manifolds, see definition~\ref{def:SuturedManifold}. 
He used $\SFH$ to give short proofs of a number of known results, eg that link Floer homology detects the Thurston norm and fibredness. Juhász also proved a surface decomposition formula, which says that $\SFH$ behaves very nicely under splitting a balanced sutured manifold along certain embedded surfaces. For basic definitions and properties of $\SFH$, we refer the reader to Juhász's original papers \cite{Juhasz,SurfaceDecomposition,polytope} and Altman's introductory article~\cite{Altman}.

\begin{definition}\label{def:sutured3mfdForTangles}
	With a tangle $T$ in $M$ and a site~$s$ of~$T$, we associate the sutured 3-manifold $M_T^s$ defined as follows: The underlying 3-manifold with boundary is the tangle complement $M_T$. The sutures on $\partial M_T$ are obtained by placing two oppositely oriented meridional circles around closed components of the tangle and meridional circles around the ends of the open components and performing surgery along the arcs in $s$, see figure~\ref{fig:HFTviaSFHsutmfd}. We orient the sutures such that one component of $R_-$ is contained in the boundary of the 3-ball.
\end{definition}

\begin{lemma}
	$M_T^s$ is balanced.
\end{lemma}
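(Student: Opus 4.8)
The plan is to verify directly the three defining conditions for a balanced sutured manifold in the sense of Juhász \cite{Juhasz}: that $M_T$ has no closed components, that the sutures meet every component of $\partial M_T$ (and no component of $R(\gamma)$ is closed), and that $\chi(R_+)=\chi(R_-)$. The first condition is immediate: $M$ is a connected homology ball and $T$ a tangle, so $M_T=M\smallsetminus\nu(T)$ is connected with non-empty boundary and hence has no closed components. The bulk of the work is the Euler characteristic count, which I would organise around the topology of $\partial M_T$.

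First I would record that $\partial M_T$ consists of the $m$ tori $\partial\nu(K_i)$ around the closed components together with a single component $F_0$ coming from $\partial M=S^2$. Deleting the $2n$ endpoint neighbourhoods turns $\partial M$ into a $2n$-holed sphere $P$ with $\chi(P)=2-2n$, and the $n$ lateral tube boundaries of the open strands glue its $2n$ boundary circles in pairs; since $P$ is connected this produces a single closed surface $F_0$ with $\chi(F_0)=\chi(P)=2-2n$ (genus $n$). Each torus carries two oppositely oriented meridional sutures, splitting it into two annuli and contributing $0$ to both $\chi(R_+)$ and $\chi(R_-)$.

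Next I would locate $R_+$ and $R_-$ on $F_0$; see figure~\ref{fig:HFTviaSFHsutmfd}. The sutures there are the $2n$ tube meridians modified by surgery along the $(n-1)$ arcs of $s$. Before surgery these meridians cut $F_0$ into the $\partial M$-part $P$ and the $n$ tube annuli. By the orientation convention ``one component of $R_-$ lies in $\partial M$'', the piece $P$ is $R_-$ (with $\chi=2-2n$) and the annuli form $R_+$ (with $\chi=0$). Each arc of $s$ lies on $\partial M$, so its surgery band lies in $R_-$ and performs a band sum of two meridional sutures: removing this band from $R_-$ is the same as cutting along a properly embedded arc, raising $\chi(R_-)$ by $1$, while attaching it as a $1$-handle to $R_+$ lowers $\chi(R_+)$ by $1$. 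After the $(n-1)$ surgeries I obtain $\chi(R_-)=(2-2n)+(n-1)=1-n$ and $\chi(R_+)=0-(n-1)=1-n$, so $\chi(R_+)=\chi(R_-)$ on $F_0$; together with the balanced tori this gives $\chi(R_+)=\chi(R_-)$ overall.

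The step that must be handled with care is the claim that each of the $(n-1)$ surgeries joins two \emph{distinct} sutures, since only then does each band sum induce the change $(\chi(R_+),\chi(R_-))\mapsto(\chi(R_+)-1,\chi(R_-)+1)$ used above; a band sum of a suture with itself would spin off an extra circle and break the count. This holds because the $2n$ endpoints, with the $S^1$-arcs as edges, form the cyclic graph $C_{2n}$, and any $(n-1)<2n$ of its edges form a forest. Hence no sequence of surgeries ever closes a cycle, and at each stage the endpoints of the next arc lie on different sutures, so the per-surgery computation is valid throughout. Finally, since a site requires $n\geq 1$, the surface $F_0$ carries sutures, so every boundary component of $M_T$ meets $\gamma$ and no component of $R(\gamma)$ is closed, which completes the verification that $M_T^s$ is balanced.
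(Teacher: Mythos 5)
Your argument is correct and is essentially the paper's own proof in expanded form: both compute $\chi(R_+)=\chi(R_-)=1-n$ by placing the $2n$ meridional sutures on the boundary component coming from $\partial M$ and observing that each of the $(n-1)$ surgeries along the arcs of $s$ moves a band from $R_-$ to $R_+$ (the paper simply assumes $m=0$ without loss of generality and states the outcome of the count, namely that $R_-$ becomes a sphere with $(n+1)$ punctures, rather than tracking it surgery by surgery). Your forest observation makes explicit the connectedness fact that the paper's phrase ``sphere with $(n+1)$ punctures'' uses implicitly, though for the Euler-characteristic equality alone it is not actually needed: removing a band from $R_-$ and attaching it to $R_+$ changes the two Euler characteristics by $+1$ and $-1$ regardless of whether the band ends on distinct sutures.
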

\begin{proof}
	Say $T$ has $n$ open components and without loss of generality, we may assume that there are no closed components. The site $s$ consists of $(n-1)$ open regions, so there are $(n-1)$ arcs which we have performed surgery along. Hence, $R_-$ is a sphere with $(n+1)$ punctures, so it has Euler characteristic $(1-n)$. Each annulus around an open tangle component contributes 0 to the Euler characteristic, but each surgery decreases the Euler characteristic by 1.
\end{proof}

\begin{theorem}\label{thm:HFTasSFT}
	$\HFT(T,s)=\SFH(M_T^s)$.
\end{theorem}

\begin{Remark}
	As in bordered sutured theory, the homological grading in sutured theory is usually only defined for each $\Spinc$-structure separately. However, like in the proof of theorem~\ref{thm:HFTiswelldefandinvariant}, one can show that in our case, $\SFH(M_T^s)$ is well-defined as a \textit{uniformly} $\delta$-graded invariant.
\end{Remark}

\begin{proof}
	The main idea is to modify a Heegaard diagram~$\mathcal{H}$ for the tangle $T$ in the following way, as illustrated in figure~\ref{fig:relationHFTandSFH}: 
	Let us consider a 2-torus with a fixed longitude and $2n$ disjoint meridians. Puncture the torus $2n$ times along the longitude such that any two meridians are no longer homotopic. We consider the remaining segments of the longitude as $\alpha$-arcs and the meridians as $\beta$-circles. We may assume that each $\beta$-circle intersects exactly one $\alpha$-arc in a single point and there are exactly $2n$ connected components in their complement on the punctured torus. We place a puncture in each of these components. Finally, we attach the (now $4n$-punctured) torus to $\Sigma$ in such a way that each $\alpha$-arc in the torus closes an $\alpha$-arc in the Heegaard surface $\Sigma_g$ for our tangle. This gives us a sutured Heegaard diagram $\overline{\mathcal{H}}$ consisting of a $2(n+m)$-punctured surface $\overline{\Sigma}$ with $(g+2n+m)$ $\alpha$-circles and $(g+3n+m-1)$ $\beta$-circles. However, $\overline{\mathcal{H}}$ is not balanced unless $n=1$.
	
	So, let us fix a site $s$. By definition, $s$ is a set of $\alpha$-arcs in $\mathcal{H}$ that are occupied by generators in $\mathcal{G}^s$. An $\alpha$-arc in $\mathcal{H}$ corresponds to an $\alpha$-arc in the punctured torus which in turn corresponds to the $\beta$-circle that it intersects. Thus, a site $s$ gives rise to a collection of $\beta$-circles on the punctured torus. For each such circle $\beta_i$, we pick a path $\gamma_i$ between the two adjacent punctures (the dashed line in figure~\ref{fig:relationHFTandSFH}) which intersects no $\alpha$-circle and no $\beta$-circle except $\beta_i$. We delete $\beta_i$ and cut the surface along $\gamma_i$. This gives us a new sutured Heegaard diagram which \emph{is} balanced. Let us denote it by $\overline{\mathcal{H}}^s$. 
	
	Now observe that generators in $\mathcal{G}^s$ correspond to generators in $\overline{\mathcal{H}}^s$ and that domains in~$\mathcal{H}$ that avoid $\Gamma$ correspond to domains in $\overline{\mathcal{H}}^s$ that avoid the boundary. Furthermore, if we started with an admissible Heegaard diagram $\mathcal{H}$, then $\overline{\mathcal{H}}^s$ is also admissible. Hence, the sutured Floer homology $\SFH(\overline{\mathcal{H}}^s)$ is well-defined and identical to $\HFT(T,s)$. Now, $\overline{\mathcal{H}}^s$ is a Heegaard diagram for $M_T^s$.
	Finally, $\SFH(M_T^s)$ also comes with two gradings, a $\Spinc(M_T^s,\Gamma)$-grading, which gives a relative $H_1(M_T)$-grading, and a homological grading. In~\cite[comment after definition~4.6]{Juhasz}, Juhász describes the first grading in terms of a homology class $\epsilon(\x,\y)$ like in remark~\ref{rem:AlexanderIdentification}. The homological grading uses the Maslov index, so it also agrees with our definition.  
\end{proof}

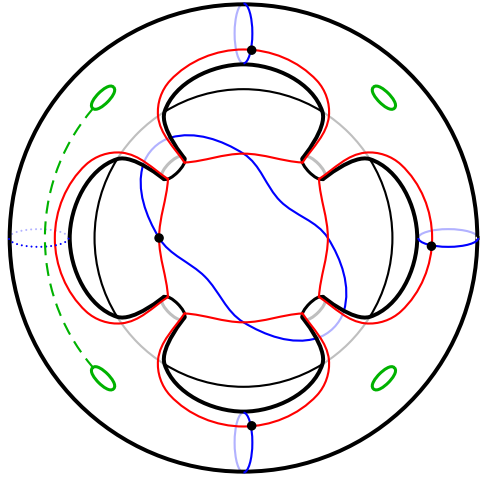
\begin{figure}[t]
	\centering
	\psset{unit=0.33}
	\begin{pspicture}(-9.55,-9.55)(9.55,9.55)
	
	\psecurve[linecolor=blue](5;-36)(5;-54)(3.4;-90)(-2.2;45)(3.4;180)(5;144)(5;126)
	\psecurve[linecolor=blue](5;144)(5;126)(3.4;90)(2.1;45)(3.4;0)(5;-36)(5;-54)
	\psecurve[linecolor=blue!30!white](3.4;180)(5;144)(5;126)(3.4;90)
	\psecurve[linecolor=blue!30!white](3.4;0)(5;-36)(5;-54)(3.4;-90)

	
	\rput{0}(0,0){\psarc(0,0){6}{-32}{32}}
	\rput{90}(0,0){\psarc(0,0){6}{-32}{32}}
	\rput{-90}(0,0){\psarc(0,0){6}{-32}{32}}
	\rput{180}(0,0){\psarc(0,0){6}{-32}{32}}
	
	\rput{45}(0,0){\psarc[linecolor=lightgray](0,0){6}{-13}{13}}
	\rput{-45}(0,0){\psarc[linecolor=lightgray](0,0){6}{-13}{13}}
	\rput{135}(0,0){\psarc[linecolor=lightgray](0,0){6}{-13}{13}}
	\rput{-135}(0,0){\psarc[linecolor=lightgray](0,0){6}{-13}{13}}

	\rput{45}(4;45){
		\psecurve[linewidth=1.15pt,linecolor=lightgray](-0.25,0)(0,-0.6)(0.25,0)(0,0.6)(-0.25,0)
		\psecurve[linewidth=1.15pt](0.25,0)(0,0.6)(-0.25,0)(0,-0.6)(0.25,0)
	}
	\rput{-45}(4;135){
		\psecurve[linewidth=1.15pt](-0.25,0)(0,-0.6)(0.25,0)(0,0.6)(-0.25,0)
		\psecurve[linewidth=1.15pt,linecolor=lightgray](0.25,0)(0,0.6)(-0.25,0)(0,-0.6)(0.25,0)}
	\rput{45}(4;-135){
		\psecurve[linewidth=1.15pt](-0.25,0)(0,-0.6)(0.25,0)(0,0.6)(-0.25,0)
		\psecurve[linewidth=1.15pt,linecolor=lightgray](0.25,0)(0,0.6)(-0.25,0)(0,-0.6)(0.25,0)}
	\rput{-45}(4;-45){
		\psecurve[linewidth=1.15pt,linecolor=lightgray](-0.25,0)(0,-0.6)(0.25,0)(0,0.6)(-0.25,0)
		\psecurve[linewidth=1.15pt](0.25,0)(0,0.6)(-0.25,0)(0,-0.6)(0.25,0)}
	
	\psellipticarc[linecolor=blue!30!white](8.25,0)(1.25,0.4){0}{180}
	\rput{90}(0,0){\psellipticarc[linecolor=blue!30!white](8.25,0)(1.25,0.4){0}{180}}
	\rput{180}(0,0){\psellipticarc[linecolor=blue!30!white,linestyle=dotted,dotsep=1pt](8.25,0)(1.25,0.4){180}{360}} 
	\rput{270}(0,0){\psellipticarc[linecolor=blue!30!white](8.25,0)(1.25,0.4){180}{360}}
	
	
	\pscurve[linewidth=1.5pt,linecap=1](3.98;-36.5)(6;-32)(7;0)(6;32)(3.98;36.5)
	\pscurve[linewidth=1.5pt,linecap=1](3.98;53.5)(6;58)(7;90)(6;122)(3.98;126.5)
	\pscurve[linewidth=1.5pt,linecap=1](3.98;143.5)(6;148)(7;180)(6;212)(3.98;216.5)
	\pscurve[linewidth=1.5pt,linecap=1](3.98;233.5)(6;238)(7;270)(6;302)(3.98;306.5)
	\pscircle[linewidth=1.5pt](0,0){9.5}
	
	\rput{0}(0,0){\pscustom[linecolor=red]{
			\psecurve(5,-50)(3.86;-38)(3.4;0)(3.86;38)(5,50)
			\psecurve(3;38)(3.86;38)(6;35)(7;25)(7.6;0)(7;-25)(6;-35)(3.86;-38)(3;-38)}
		\psdots[linecolor=red,dotsize=1pt](3.86;-38)(3.86;38)
	}
	\rput{90}(0,0){\pscustom[linecolor=red]{
			\psecurve(5,-50)(3.86;-38)(3.4;0)(3.86;38)(5,50)
			\psecurve(3;38)(3.86;38)(6;35)(7;25)(7.6;0)(7;-25)(6;-35)(3.86;-38)(3;-38)}
		\psdots[linecolor=red,dotsize=1pt](3.86;-38)(3.86;38)
	}
	\rput{180}(0,0){\pscustom[linecolor=red]{
			\psecurve(5,-50)(3.86;-38)(3.4;0)(3.86;38)(5,50)
			\psecurve(3;38)(3.86;38)(6;35)(7;25)(7.6;0)(7;-25)(6;-35)(3.86;-38)(3;-38)}
		\psdots[linecolor=red,dotsize=1pt](3.86;-38)(3.86;38)
	}
	\rput{-90}(0,0){\pscustom[linecolor=red]{
			\psecurve(5,-50)(3.86;-38)(3.4;0)(3.86;38)(5,50)
			\psecurve(3;38)(3.86;38)(6;35)(7;25)(7.6;0)(7;-25)(6;-35)(3.86;-38)(3;-38)}
		\psdots[linecolor=red,dotsize=1pt](3.86;-38)(3.86;38)
	}
	
	\rput{45}(8;45){
		\psecurve[linecolor=darkgreen,linewidth=1.15pt](0.25,0)(0,0.6)(-0.25,0)(0,-0.6)(0.25,0)(0,0.6)(-0.25,0)}
	\rput{-45}(8;135){
		\psecurve[linecolor=darkgreen,linewidth=1.15pt](0.25,0)(0,0.6)(-0.25,0)(0,-0.6)(0.25,0)(0,0.6)(-0.25,0)}
	\rput{45}(8;-135){
		\psecurve[linecolor=darkgreen,linewidth=1.15pt](0.25,0)(0,0.6)(-0.25,0)(0,-0.6)(0.25,0)(0,0.6)(-0.25,0)}
	\rput{-45}(8;-45){
		\psecurve[linecolor=darkgreen,linewidth=1.15pt](0.25,0)(0,0.6)(-0.25,0)(0,-0.6)(0.25,0)(0,0.6)(-0.25,0)}
	
	\psellipticarc[linecolor=blue](8.25,0)(1.25,0.4){180}{360}
	\rput{90}(0,0){\psellipticarc[linecolor=blue](8.25,0)(1.25,0.4){180}{360}}
	\rput{180}(0,0){\psellipticarc[linecolor=blue,linestyle=dotted,dotsep=1pt](8.25,0)(1.25,0.4){0}{180}}
	\rput{270}(0,0){\psellipticarc[linecolor=blue](8.25,0)(1.25,0.4){0}{180}}
	
	\psdot(3.4;180)
	\psdot(7.58;87.5)
	\psdot(7.58;-2.5)
	\psdot(7.58;-87.5)
	
	\psarc[linecolor=darkgreen,linestyle=dashed](0,0){8}{139}{-139}
	\end{pspicture}
	\caption{From a Heegaard diagram for a tangle to one for a sutured manifold, illustrating the proof of theorem~\ref{thm:HFTasSFT}. This example shows the Heegaard diagram for a 1-crossing tangle.}\label{fig:relationHFTandSFH}
\end{figure}

The following results can be viewed as the categorified counterparts of some results from sections~\ref{sec:basicpropertiesofnabla} and~\ref{sec:4endedandmutation}.
\begin{proposition}[compare~\ref{prop:mirrortangle}]\label{prop:HFTmirror}
	Given a tangle \(T\) in $M$, let \(\m(T)\) denote its mirror image in the mirror of~$M$. Let \(\widehat{\operatorname{CFT}^\ast\!\!}\,\,(T,s)\) denote the dual chain complex of \(\CFT(T,s)\), with the usual convention that all gradings are reversed. Then 
	$$\CFT(\m(T),s)\cong\widehat{\operatorname{CFT}^\ast\!\!}\,\,(T,s),$$
	as (relatively) bigraded invariants.
\end{proposition}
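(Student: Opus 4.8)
The plan is to categorify the proof of proposition~\ref{prop:mirrortangle}: I would exhibit a Heegaard diagram for $\m(T)$ that differs from one for $T$ only by reversing the orientation of the Heegaard surface, and then recognise that this single operation simultaneously mirrors the tangle, dualises the chain complex, and reverses all three gradings. Concretely, I would first fix an admissible Heegaard diagram $\mathcal{H}_T=(\Sigma_g,\mathcal{Z},\A,\B)$ for $T$, which exists by lemma~\ref{lem:HeegaardMovesAdmissibility}, and claim that $\overline{\mathcal{H}}:=(-\Sigma_g,\mathcal{Z},\A,\B)$, obtained by reversing the orientation of $\Sigma_g$ while leaving all curves, arcs and boundary circles in place, is an admissible Heegaard diagram for $\m(T)$ in $\m(M)$. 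Using the handle-attachment description of definition~\ref{def:HDsfortangles} together with the Morse-theoretic picture of remark~\ref{rem:conventions1}, reversing the orientation of $\Sigma_g$ reverses the orientation of $\Sigma_g\times[0,1]$ and hence of the assembled complement, producing $-M_T=\m(M)\smallsetminus\nu(\m(T))$; the meridional pairs in $\mathcal{Z}$ and the arcs $\Aa$ are meridional and boundary data insensitive to orientation, so $\overline{\mathcal{H}}$ indeed represents $\m(T)$. Admissibility is preserved because negating all multiplicities sends periodic domains with mixed signs to periodic domains with mixed signs. Locally, this is exactly the statement that the two diagrams in figure~\ref{fig:AlexCodesForOneCrossingsWithDelta} are mirror images of each other.

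Next I would identify the two chain complexes. Since the underlying intersection points $\A\cap\B$ and the arcs $\Aa$ are unchanged, the generating set $\mathbb{T}$ and its partition into sites $\mathbb{T}^s$ are identified verbatim. Reversing the orientation of $\Sigma_g$ negates the boundary operator on domains, so the condition $d(d\phi\cap\B)=\x-\y$ for $\mathcal{H}_T$ becomes the condition for $\y-\x$ in $\overline{\mathcal{H}}$; thus the \emph{same} formal domain gives a bijection between $\pi^\partial_2(\x,\y)$ for $\mathcal{H}_T$ and $\pi^\partial_2(\y,\x)$ for $\overline{\mathcal{H}}$. On the analytic side, the orientation-reversing identity $\Sigma_g\to-\Sigma_g$ conjugates a path of almost complex structures $J$ to $\bar{J}$ and carries $J$-holomorphic representatives of a class to $\bar{J}$-holomorphic representatives of the reversed class, preserving both the $\mathbb{R}$-action and the index-one condition (the Maslov index, Euler measure and point multiplicities of definition~\ref{def:homgrading} are unchanged for a fixed underlying domain). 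Hence $\#\widehat{\mathcal{M}}$ for $\overline{\mathcal{H}}$ from $\x$ to $\y$ equals $\#\widehat{\mathcal{M}}$ for $\mathcal{H}_T$ from $\y$ to $\x$, which says precisely that the differential of $\CFT(\overline{\mathcal{H}})$ is the transpose of that of $\CFT(\mathcal{H}_T)$, i.e. $\CFT(\overline{\mathcal{H}})$ is the dual complex $\widehat{\operatorname{CFT}^\ast\!\!}\,\,(T,s)$ as ungraded complexes.

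It then remains to match gradings. Because the class is swapped from $(\x,\y)$ to $(\y,\x)$ while $\mu$ is invariant on a fixed domain, the induced relative $\delta$-grading changes sign, and hence so does the homological grading via $h=\tfrac{1}{2}A^r-\delta$. For the Alexander grading the same class-swap combines with two further sign changes, namely the negation of the connecting homomorphism $\partial$ under orientation reversal and the flip of the right-hand-rule identification $H_1(M_T)\cong\mathbb{Z}^{n+m}$ of definition~\ref{def:AlexGradingOnDomains} caused by mirroring; these conspire to reverse the relative Alexander grading as well. All three relative gradings are therefore negated, which is exactly the convention built into $\widehat{\operatorname{CFT}^\ast\!\!}\,\,(T,s)$ and is the categorified shadow of the substitution $t_i\mapsto t_i^{-1}$, $h\mapsto h^{-1}$ of proposition~\ref{prop:mirrortangle}. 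Any mismatch between the symmetrising normalisations of $\CFT(\m(T),s)$ and $\widehat{\operatorname{CFT}^\ast\!\!}\,\,(T,s)$ is a constant, absorbed into the overall shift recorded by $\dot{\cong}$; invariance of both sides as homotopy types follows from theorem~\ref{thm:HFTiswelldefandinvariant}.

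The conceptual content is standard, so the main obstacle is sign bookkeeping: verifying that the signed counts $\#\widehat{\mathcal{M}}$ transpose with the correct signs, so that one genuinely obtains the dual \emph{complex} rather than merely a matrix transpose up to sign, and then tracking the three separate sign changes feeding into the Alexander grading to confirm a net reversal and pin down the single constant overall shift. This last point is precisely why only $\dot{\cong}$, and not honest equality, can be asserted.
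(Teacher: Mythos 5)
Your proof is correct, but it takes a genuinely different route from the paper. The paper's entire proof is a two-line citation: by (the proof of) theorem~\ref{thm:HFTiswelldefandinvariant}, $\CFT(T,s)$ is identified with the sutured Floer complex of the sutured manifold $M_T^s$, and mirroring $T$ replaces $M_T^s$ by its orientation reversal, so the claim follows from the duality theorem for sutured Floer homology, \cite[proposition~2.14]{DecatSFH}. You instead work directly at the level of the tangle Heegaard diagram: reverse the orientation of $\Sigma_g$, observe that the same formal domains give a multiplicity-preserving bijection $\pi_2^\partial(\x,\y)\leftrightarrow\pi_2^\partial(\y,\x)$, identify the moduli spaces via conjugation of the almost complex structure so that the differential transposes, and track the three sign flips in the gradings. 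Both arguments are sound, and your grading bookkeeping (class swap, negation of $\partial$ under orientation reversal, flip of the right-hand-rule identification in definition~\ref{def:AlexGradingOnDomains}) comes out right. What the paper's route buys is exactly what it was designed for: all analytic content --- moduli-space identifications and the signed-count transposition over $\mathbb{Z}$, which you correctly single out as the residual work in your approach --- is outsourced to the established sutured literature; the cost is that it inherits the caveats of that short-cut spelled out in remark~\ref{rem:drawbacksOfShortcut} (SFH gradings are a priori relative, per $\Spinc$-structure and per site). Your route is self-contained, categorifies proposition~\ref{prop:mirrortangle} transparently, and delivers the uniform bigraded statement across all sites at once via the formal-domain bijection, but it obliges you to redo the standard duality analysis rather than cite it.
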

\begin{proof}
	This follows from the previous theorem and \cite[proposition~2.14]{DecatSFH}.
\end{proof}
\begin{proposition}[compare~\ref{prop:reverseorientI} and~\ref{cor:alloorientsrev}]\label{prop:HFTreverseorientI}
	Let \(T\) be an oriented \(r\)-component tangle and \(s\) a site of \(T\). If \(\rr(T,t_1)\) denotes the same tangle \(T\) with the orientation of the first strand reversed, then for all Alexander gradings \(a=(a_1,\dots,a_r)\in\mathbb{Z}^r\),
	$$\CFT(\rr(T,t_1),s,a)\cong\CFT(T,s,(-a_1,a_2,\dots,a_r))$$
	as (relatively) bigraded invariants. Similarly, if \(\rr(T)\) denotes the tangle \(T\) with the orientation of all strands reversed, then 
	$$\CFT(\rr(T),s,a)\cong\CFT(T,s,-a).$$
\end{proposition}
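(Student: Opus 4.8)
The plan is to realise both orientations on a single Heegaard diagram and to track how the three gradings transform, the underlying chain data being manifestly orientation-independent.

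First I would observe that a Heegaard diagram $\mathcal{H}=(\Sigma_g,\mathcal{Z},\A,\B)$ for $T$ is simultaneously a Heegaard diagram for $\rr(T)$. None of the conditions in Definition~\ref{def:HDsfortangles} --- that attaching $2$-handles along $\Ac$ and $\B$ recovers $M_T$, that $\mathcal{Z}$ consists of meridional pairs, and that $\Aa$ is the prescribed collection of boundary arcs --- refers to the orientation of $T$. Consequently the generating set $\mathbb{T}$, its site decomposition, the groups $\pi_2(\x,\y)$ and $\pi^{\partial}_2(\x,\y)$, the Maslov index $\mu$, and hence the differential $\partial$ --- a signed count of holomorphic curves depending only on $\mathcal{H}$ and the auxiliary analytic data, never on the orientation of $T$ --- are literally identical for $T$ and $\rr(T)$. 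Thus the chain module $(\CFT(\mathcal{H}),\partial)$ is the same in both cases, and only the gradings can differ.

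Next I would compute the effect on the gradings. The homomorphism $A=A^f+A^b\colon D_\mathcal{H}\to H_1(M_T)$ of Definition~\ref{def:AlexGradingOnDomains} is orientation-independent; the orientation of $T$ enters only through the identification $H_1(M_T)\cong\mathbb{Z}^{n+m}$ furnished by the right-hand rule on the meridians. Reversing the orientation of every strand reverses every meridian, so this identification changes exactly by $-\mathrm{id}$. Hence the relative Alexander grading for $\rr(T)$ is the negative of that for $T$, so a generator with Alexander grading $a$ relative to $\rr(T)$ has grading $-a$ relative to $T$; this gives the identification of chain modules $\CFT(\rr(T),s,a)=\CFT(T,s,-a)$ inside $\CFT(\mathcal{H})$. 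For the homological grading, recall $h=\tfrac{1}{2}A^r-\delta$. The $\delta$-grading is built from $\mu$ and is therefore unchanged, while $A^r$ changes sign, so on a domain $\phi$ one computes $h_{\rr(T)}(\phi)=-\tfrac{1}{2}A^r(\phi)-\delta(\phi)=h_T(\phi)-A^r(\phi)$. The correction $A^r(\phi)$ depends only on the Alexander grading difference of the endpoints of $\phi$, hence vanishes whenever $\phi$ connects two generators in a common Alexander grading. Since $\CFT(T,s,a)$ is supported in a single Alexander grading by construction, the relative homological gradings on $\CFT(\rr(T),s,a)$ and $\CFT(T,s,-a)$ coincide. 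Assembling the three gradings, the identity map on generators is a grading-preserving chain isomorphism up to an overall homological shift, which is exactly the asserted $\CFT(\rr(T),s,a)\,\dot{\cong}\,\CFT(T,s,-a)$.

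The only step requiring genuine care --- and the main obstacle --- is this last homological-grading bookkeeping: one must verify that the Alexander-dependent correction $A^r(\phi)$ really drops out blockwise on each fixed Alexander grading, rather than surviving as a grading-dependent shift, so that the relative $\mathbb{Z}$-gradings match within every summand. The first (single-strand) statement follows by the identical argument, with $-\mathrm{id}$ replaced by the reflection $\sigma_1$ negating only the first coordinate of $\mathbb{Z}^{n+m}$; equivalently, one recovers the all-strand case by iterating the single-strand case over $t_1,\dots,t_r$.
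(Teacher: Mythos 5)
Your proof is correct and is essentially the paper's own argument, which the paper compresses into two sentences: the orientation of a strand only enters through the choice of orientation of its meridian (hence the sign flip in the Alexander grading), while the relative $\delta$-grading, being defined via the Maslov index on an orientation-independent Heegaard diagram, is unchanged. Your additional bookkeeping --- that the correction $A^r(\phi)$ to the homological grading vanishes on each fixed Alexander summand --- is exactly the detail the paper leaves implicit, since $h$ is determined by $\delta$ and $A$.
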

\begin{proof}
	The orientation of a tangle component is just a choice of an orientation of its meridian. This does not affect the relative $\delta$-grading.
\end{proof}
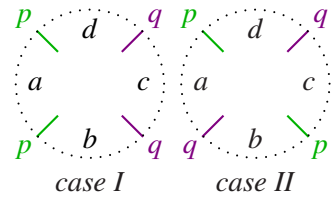
\begin{wrapfigure}{r}{0.3333\textwidth}
	\centering
	\vspace*{-10pt}
	\begin{pspicture}(-2.1,-1.5)(2.1,1.05)
	\rput(-1.1,0){
		\SpecialCoor
		\psline[linecolor=violet](0.6;45)(1;45)
		\psline[linecolor=violet](0.6;-45)(1;-45)
		
		\psline[linecolor=darkgreen](0.6;135)(1;135)
		\psline[linecolor=darkgreen](0.6;-135)(1;-135)
		
		\uput{0.6}[0](0,0){$c$}
		\uput{0.6}[90](0,0){$d$}
		\uput{0.6}[180](0,0){$a$}
		\uput{0.6}[270](0,0){$b$}
		
		\uput{1.05}[135](0,0){$\textcolor{darkgreen}{p}$}
		\uput{1.05}[-135](0,0){$\textcolor{darkgreen}{p}$}
		\uput{1.05}[45](0,0){$\textcolor{violet}{q}$}
		\uput{1.05}[-45](0,0){$\textcolor{violet}{q}$}
		\pscircle[linestyle=dotted](0,0){1}
		
		\rput(0,-1.3){\textit{case I}}
	}
	
	\rput(1.1,0){
		\psline[linecolor=violet](0.6;45)(1;45)
		\psline[linecolor=violet](0.6;-135)(1;-135)
		
		\psline[linecolor=darkgreen](0.6;135)(1;135)
		\psline[linecolor=darkgreen](0.6;-45)(1;-45)
		
		\uput{0.6}[0](0,0){$c$}
		\uput{0.6}[90](0,0){$d$}
		\uput{0.6}[180](0,0){$a$}
		\uput{0.6}[270](0,0){$b$}
		
		\uput{1.05}[135](0,0){$\textcolor{darkgreen}{p}$}
		\uput{1.05}[-45](0,0){$\textcolor{darkgreen}{p}$}
		\uput{1.05}[45](0,0){$\textcolor{violet}{q}$}
		\uput{1.05}[-135](0,0){$\textcolor{violet}{q}$}
		\pscircle[linestyle=dotted](0,0){1}
		\rput(0,-1.3){\textit{case II}}
	}
	
	\end{pspicture}
	\caption{Two cases for theorem~\ref{thm:fourendedHFT}}\label{fig:4endedcasesCAT}	
	\bigskip\bigskip
\end{wrapfigure}
\myfixwrapfig

\begin{theorem}[compare~\ref{thm:fourendedonecolour} and~\ref{rem:OtherRelation}]\label{thm:fourendedHFT}
	Let \(T\) be an oriented 4-ended tangle in $M$. We distinguish the two cases shown in figure~\ref{fig:4endedcasesCAT}.
	In both cases, 
	\begin{equation}
	\CFT(T,b)\cong\CFT(\rr(T),d)\label{eqn:BD}\tag{B-D}
	\end{equation}
	as (relatively) bigraded invariants.
	In case I, we also have
	\begin{equation}
	V_{\textcolor{darkgreen}{p}}\otimes\CFT(T,a)\cong V_{\textcolor{violet}{q}}\otimes\CFT(\rr(T),c), \tag{A-C}\label{eqn:AC}
	\end{equation}
	where \(V_t\) denotes a 2-dimensional vector space supported in consecutive Alexander and homological gradings.
	In case II, the second identity holds if we drop the tensor factors \(V_{\textcolor{darkgreen}{p}}\) and \(V_{\textcolor{violet}{q}}\).
\end{theorem}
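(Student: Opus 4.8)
The plan is to transport the whole statement into sutured Floer homology and mirror, on the categorified level, the proof of the decategorified Theorem~\ref{thm:fourendedonecolour}. By Theorem~\ref{thm:HFTiswelldefandinvariant} I may replace each $\CFT(T,s)$ by $\SFH(M_T^s)$, and by Proposition~\ref{prop:HFTreverseorientI} passing from $T$ to $\rr(T)$ merely reverses the Alexander grading; since $\dot{\cong}$ allows an overall grading shift, the theorem becomes an intrinsic statement about the sutured manifolds $M_T^s$ and their relative bigradings. The first step is to read off from Figure~\ref{fig:4endedcasesCAT} what the surgery along the arc of a site does to the sutures: the arc of a site either joins the two ends of a single open strand --- converting it into a closed component with two meridional sutures --- or joins ends of two \emph{different} strands, fusing them into one open strand. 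In Case~I the arcs $a$ and $c$ close off $p$ and $q$ respectively while $b$ and $d$ fuse $p$ and $q$; in Case~II every arc fuses two distinct strands.

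Next I would realise the two sides of each identity as openings of one common closure link. The arcs $b,d$ are the two closure arcs of the ``numerator'' closure and $a,c$ those of the ``denominator'' closure; filling in \emph{both} arcs of a closure produces an oriented link $L\subset S^3$, and $M_T^s$ is recovered from $S^3\smallsetminus\nu(L)$ by decomposing along the meridional disc dual to the remaining closure arc. Juh\'asz's $\SFH$-description of link Floer homology identifies $\SFH(S^3\smallsetminus\nu(L))$ with $\HFL(L)$, and his surface decomposition formula~\cite{SurfaceDecomposition} identifies $\SFH(M_T^s)$ with the outer-$\Spinc$ summand of that group. The orientation bookkeeping enters exactly here: making $L$ an \emph{oriented} link forces the two closure arcs to be oriented oppositely relative to the strands, so opening $L$ at its two arcs leaves tangle orientations differing by a global reversal --- precisely the $\rr$ in the statement, via Proposition~\ref{prop:HFTreverseorientI}. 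This is also why the stronger, $\rr$-free relations cannot be expected.

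The case distinction then reduces to the number of components of the relevant closure $L$. For \eqref{eqn:BD} in both cases, and for \eqref{eqn:AC} in Case~II, the closure $L$ is a knot, and the two sutured manifolds obtained by opening $L$ at its two closure arcs are homeomorphic (the cut point can be slid around $L$); hence $\SFH(M_T^b)\cong\SFH(M_T^d)$, giving the stated equivalences with no extra factor. For \eqref{eqn:AC} in Case~I the denominator closure $L_d$ has two components $p$ and $q$: now site $a$ opens the $q$-component while site $c$ opens the $p$-component, so the two sides are genuinely different sutured manifolds. Re-closing the opened strand restores $S^3\smallsetminus\nu(L_d)$ at the cost of one meridional suture pair, which tensors $\SFH$ with a two-dimensional factor supported in consecutive gradings --- the categorification of the $(c-c^{-1})$ normalisation of Theorem~\ref{thm:Eulercharagreeswithnabla}. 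Thus $V_q\otimes\CFT(T,a)\dot{\cong}\HFL(L_d)\dot{\cong}V_p\otimes\CFT(\rr(T),c)$, and after identifying the Alexander gradings of $p$ and $q$ the factors $V_p$ and $V_q$ agree, yielding the stated relation.

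The main obstacle I expect is entirely in the gradings. First, both Juh\'asz's formula and the $\SFH$-identification of Theorem~\ref{thm:HFTiswelldefandinvariant} \emph{a priori} deliver only the relative $\mathbb{Z}/d$-gradings per $\Spinc$-structure flagged in Remark~\ref{rem:drawbacksOfShortcut}, whereas I need a single relative $\mathbb{Z}$-grading valid simultaneously across all Alexander gradings; reconstructing this uniform bigrading (up to the overall shift permitted by $\dot{\cong}$) is the delicate point, and where it cannot be read off from the Euler characteristic one must fall back on the explicit grading computation from the proof of Theorem~\ref{thm:Eulercharagreeswithnabla}. Second, one must verify that the chosen meridional discs are admissible decomposing surfaces for the surface decomposition formula and that the outer summand is genuinely everything in the knot-closure cases; checking these hypotheses, rather than any homology computation, is the real crux.
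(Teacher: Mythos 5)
Your proposal has a genuine gap, and it sits at the very centre of the argument: the claimed identification of $M_T^s$ with a cut-open closure complement is false. The suture of $M_T^b$ is obtained by \emph{surgering} the two meridians along the arc $b$ (definition~\ref{def:sutured3mfdForTangles}); it does not cap off the tangle with that arc. Consequently $M_T^b$ is the four-ended tangle complement with three sutures, and it cannot be ``recovered from $S^3\smallsetminus\nu(L)$ by decomposing along the meridional disc dual to the remaining closure arc'': no such properly embedded disc exists (what exists is a meridional annulus, or the four-punctured sphere $\partial B^3\smallsetminus\nu(T)$, and cutting along either of these produces the complement of the knotted arc with two meridional sutures, i.e.\ essentially $\HFL(L)$ again --- not $M_T^s$). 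The direction of information flow is the problem: Juhász's formula~\cite{SurfaceDecomposition} exhibits the $\SFH$ of a decomposed manifold as a direct \emph{summand} of the $\SFH$ of the original, whereas passing from $M_T^s$ to the closure complement is a sutured \emph{gluing} (attaching $2$-handles along the closure arcs), which loses information. The paper's own example~\ref{exa:pretzeltangle} is a concrete counterexample to your chain of identifications: for the $(2,-3)$-pretzel tangle the closure at $b$ and $d$ has trivial Alexander polynomial (indeed $\chi(\HFT(T,d))$ at $p=q=t$ is a unit, and a five-crossing knot with $\Delta=1$ is the unknot), so your argument would force $\SFH(M_T^b)$ to be a summand of $\HFL(\text{unknot})\cong\mathbb{Z}$; but figure~\ref{fig:mutationpretzeltangleTresult} shows $\HFT(T,b)=\SFH(M_T^b)\cong\mathbb{Z}^3$. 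The same mechanism invalidates your Case~I step ``re-closing the opened strand \dots{} tensors $\SFH$ with a two-dimensional factor'': that would amount to a glueing formula for $\HFT$ under capping off tangle ends, and the paper stresses that $\HFT$ alone admits no such glueing formula (this is precisely why Zarev's bordered sutured theory~\cite{Zarev} or the peculiar modules of~\cite{pqMod} are needed).

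For comparison, the paper never passes to closures. For \eqref{eqn:BD} it observes that $M_T^b$ and $M_T^d$ are the \emph{same} sutured manifold after exchanging the roles of $R_-$ and $R_+$ (seen by pushing the meridional sutures through the tangle); then \cite[proposition~2.14]{DecatSFH} says that swapping $R_\pm$ conjugates the $\Spinc$-grading, and proposition~\ref{prop:HFTreverseorientI} converts that conjugation into the orientation reversal $\rr$. For \eqref{eqn:AC} in Case~I it surface-decomposes $M_T^c$ itself along the boundary of a neighbourhood $N$ of $R_-\cup(\text{$q$-component of }R_+)$, which splits off $\SFH(N)=V_q$ and leaves the tangle complement with a single suture separating the $p$-ends from the $q$-ends; doing the same at site $a$ produces $V_p$, and the two resulting single-suture manifolds are again compared by the $R_\pm$ swap. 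So the orientation reversal enters through $\Spinc$-conjugation, not through sliding a cut point around a closure, and the $V_t$ factors come from decomposing the tangle complement, not from re-closing a link. If you want to salvage your outline, the part worth keeping is the observation that in Case~I the sites $a$, $c$ are distinguished from $b$, $d$ by whether the site arc joins ends of the same strand; but the proof has to be rebuilt on the $R_\pm$-swap and the decomposition of $M_T^c$, as above.
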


\begin{Remark}\label{rem:SymRelCAT}
	Theorem~\ref{thm:INTROfourendedHFT} from the introduction, ie the univariate version of the theorem above, follows immediately, since two chain complexes are the same iff they are the same after tensoring with a free Abelian group such as $V_{\textcolor{darkgreen}{p}}$.
\end{Remark}

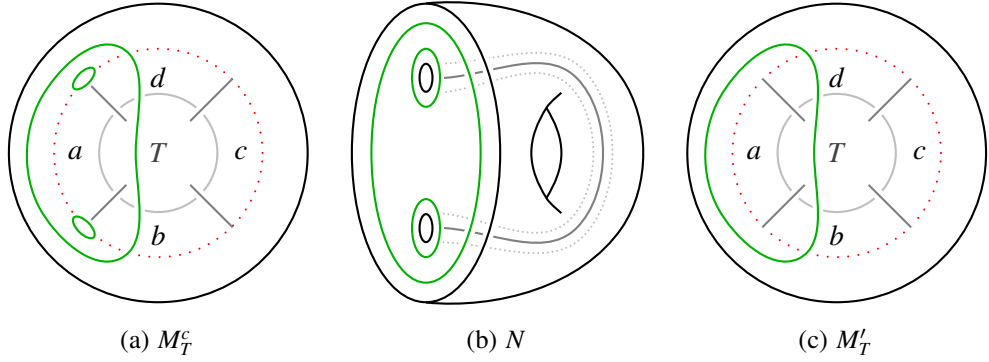
\begin{figure}[t]
	\centering
	\begin{subfigure}[b]{0.31\textwidth}
		\centering
		\psset{unit=0.2}
		\begin{pspicture}(-10,-10.3)(10,10.3)
		\pscircle(0,0){10}
		\pscircle[linecolor=red,linestyle=dotted](0,0){7}

		\pscircle[linecolor=lightgray](0,0){4}
		
		\psline[linecolor=white,linewidth=4pt](3;45)(6.3;45)
		\psline[linecolor=white,linewidth=4pt](3;135)(6.3;135)
		\psline[linecolor=white,linewidth=4pt](3;-135)(6.3;-135)
		\psline[linecolor=white,linewidth=4pt](3;-45)(6.3;-45)
		
		\psline[linecolor=gray](3;45)(7;45)
		\psline[linecolor=gray](3;135)(6.3;135)
		\psline[linecolor=gray](3;-135)(6.3;-135)
		\psline[linecolor=gray](3;-45)(7;-45)

		\rput{-135}(7;-135){\psellipse[linecolor=darkgreen,fillcolor=white,fillstyle=solid](0,0)(0.5,1)}
		\rput{135}(7;135){\psellipse[linecolor=darkgreen,fillcolor=white,fillstyle=solid](0,0)(0.5,1)}
		
		\psecurve[linecolor=white,linewidth=4pt](7.5;110)(1.5;180)(7.5;-110)(8.7;-155)
		(8.6;155)(7.5;110)(1.5;180)(7.5;-110)
		\psecurve[linecolor=darkgreen](7.5;110)(1.5;180)(7.5;-110)(8.7;-155)
		(8.6;155)(7.5;110)(1.5;180)(7.5;-110)
		
		\rput(-5.5,0){$a$}
		\rput(0,-5.5){$b$}
		\rput(5.5,0){$c$}
		\rput(0,5){$d$}
		
		\rput(0,0){\textcolor{darkgray}{$T$}}
		
		\end{pspicture}
		\caption{$M_T^c$}\label{fig:SurfaceDecompBefore}
	\end{subfigure}
	\quad
	\begin{subfigure}[b]{0.3\textwidth}
		\centering
		\psset{unit=0.2}
		\begin{pspicture}(-10,-10.3)(9.5,10.3)
		
		\psecurve[linecolor=gray](-10,6)(-5,5)(5,5)(5,-5)(-5,-5)(-10,-6)
		
		\psellipse*[linecolor=white,linewidth=4pt](-5,5)(0.5,1)
		\psellipse*[linecolor=white,linewidth=4pt](-5,-5)(0.5,1)
		
		\psecurve[linecolor=lightgray,linestyle=dotted,dotsep=1pt](-10,5)(-5,4)(4.5,4.5)(4.5,-4.5)(-5,-4)(-10,-5)
		\psecurve[linecolor=lightgray,linestyle=dotted,dotsep=1pt](-10,7)(-5,6)(5.5,5.5)(5.5,-5.5)(-5,-6)(-10,-7)
		
		\psellipse[linecolor=white,linewidth=4pt](-5,0)(5.3,10.3)
		
		\psecurve(-13,5)(-5,9.95)(8,5)(8,-5)(-5,-9.95)(-13,-5)
		\psellipse(-5,0)(5,10)
		\rput(-2,0){
			\psecurve(4,4)(5,3)(6,0)(5,-3)(4,-4)
			\pscurve(6,4)(5,3)(4,0)(5,-3)(6,-4)
		}
		
		\psellipse[linecolor=white,linewidth=4pt](-5,0)(4,9)
		\psellipse[linecolor=darkgreen](-5,0)(3.7,8.7)
		
		\psellipse[linecolor=white,linewidth=4pt](-5,5)(1.2,2.2)
		\psellipse[linecolor=white,linewidth=4pt](-5,-5)(1.2,2.2)
		\psellipse[linecolor=darkgreen,fillcolor=white,fillstyle=solid](-5,5)(1,2)
		\psellipse[linecolor=darkgreen,fillcolor=white,fillstyle=solid](-5,-5)(1,2)
		
		\psellipse(-5,5)(0.5,1)
		\psellipse(-5,-5)(0.5,1)
		
		\end{pspicture}
		\caption{$N$}\label{fig:SurfaceDecompN}
	\end{subfigure}
	\quad
	\begin{subfigure}[b]{0.31\textwidth}
		\centering
		\psset{unit=0.2}
		\begin{pspicture}(-10,-10.3)(10,10.3)
		\pscircle(0,0){10}
		\pscircle[linecolor=red,linestyle=dotted](0,0){7}

		\pscircle[linecolor=lightgray](0,0){4}
		
		\psline[linecolor=white,linewidth=4pt](3;45)(7;45)
		\psline[linecolor=white,linewidth=4pt](3;135)(7;135)
		\psline[linecolor=white,linewidth=4pt](3;-135)(7;-135)
		\psline[linecolor=white,linewidth=4pt](3;-45)(7;-45)
		
		\psline[linecolor=gray](3;45)(7;45)
		\psline[linecolor=gray](3;135)(7;135)
		\psline[linecolor=gray](3;-135)(7;-135)
		\psline[linecolor=gray](3;-45)(7;-45)

		
		\psecurve[linecolor=white,linewidth=4pt](7.5;110)(1.5;180)(7.5;-110)(8.7;-155)
		(8.6;155)(7.5;110)(1.5;180)(7.5;-110)
		\psecurve[linecolor=darkgreen](7.5;110)(1.5;180)(7.5;-110)(8.7;-155)
		(8.6;155)(7.5;110)(1.5;180)(7.5;-110)

		\rput(-5.5,0){$a$}
		\rput(0,-5.5){$b$}
		\rput(5.5,0){$c$}
		\rput(0,5){$d$}
		
		\rput(0,0){\textcolor{darkgray}{$T$}}
		
		\end{pspicture}
		\caption{$M_T^\prime$}\label{fig:SurfaceDecompX}
	\end{subfigure}
	\caption{The surface decomposition used in the proof of theorem~\ref{thm:fourendedHFT}}\label{fig:SurfaceDecomp}
\end{figure}

\begin{proof}[Proof of theorem~\ref{thm:fourendedHFT}]
	Let us consider relation \eqref{eqn:BD} first. The underlying sutured manifolds are the same after switching the roles of $R_-$ and $R_+$ on one side. This can be easily seen by pushing the two meridional sutures of the open tangle components through the tangle. Then, by \cite[proposition~2.14]{DecatSFH}, the sutured Floer homologies are identical, except that the $\Spinc$-gradings are opposite to each other. Now apply proposition~\ref{prop:HFTreverseorientI}.
	
	In case~II, \eqref{eqn:AC} (without the tensor factors) follows from the same arguments. 
	In case~I, relation~\eqref{eqn:AC} is an exercise in applying the surface decomposition formula for sutured Floer homology, see figure~\ref{fig:SurfaceDecomp}. Consider $M_T^c$. Let $N$ be a tubular neighbourhood of the union of~$R_-$ and the component of~$R_+$ corresponding to the $\textcolor{darkgreen}{p}$-strand. Let $S$ be the surface obtained as the intersection of~$N$ with the closure of $M_T^\prime:=M_T^c\smallsetminus N$. Note that $M_T^\prime$ is diffeomorphic to~$M_T$. We turn it into a balanced sutured manifold by adding a single suture on the boundary of $M$, separating the $\textcolor{darkgreen}{p}$-ends from the ${\textcolor{violet}{q}}$-ends. We get a decomposition
	$$M_T^c\rightsquigarrow^S M_T^\prime\cup N,$$
	which satisfies all conditions of \cite[proposition~8.6]{SurfaceDecomposition}, except that $M'_T$ might not be taut. If $M'_T$ is not taut because it is not irreducible, we may apply the connected sum formula~\cite[proposition~9.15]{Juhasz} on both sides first. So we may assume that $M'_T$ is irreducible. Then, by definition, if $M'_T$ is not taut, one of the components of $\partial M'_T\smallsetminus\Gamma$ is not norm minimizing. Hence, there is an embedded disk separating the $\textcolor{darkgreen}{p}$- and ${\textcolor{violet}{q}}$-strand, so the sutured Floer homologies of both $M'_T$ and $M_T$ vanish by~\cite[proposition~9.18]{Juhasz}. So, in any case,
	$$\SFH(M_T^c)\cong\SFH(M_T^\prime)\otimes \SFH(N).$$
	It is now straightforward to calculate $\SFH(N)$ which gives $V_{\textcolor{darkgreen}{p}}$. Thus,
	$$\HFT(T,c)\cong\SFH(M_T^\prime)\otimes V_{\textcolor{darkgreen}{p}}$$
	and similarly
	$$\HFT(T,a)\cong\SFH(M_T^{\prime\prime})\otimes V_{\textcolor{violet}{q}},$$
	where $M_T^{\prime\prime}$ agrees with $M_T^\prime$, except that the roles of $R_-$ and $R_+$ are interchanged. 
	To get relation~\eqref{eqn:AC}, we now argue just as before.
\end{proof}

\begin{figure}[t]
	\centering
	\begin{subfigure}[b]{0.29\textwidth}\centering
		\psset{unit=0.55, linewidth=1.1pt}
		\begin{pspicture}[showgrid=false](-4.2,-3.1)(2.2,3.1)
		\psecurve[linecolor=violet]{->}(-2.5,1.5)(0,2)(0.75,1)(-0.75,-1)(0,-2)(0.97,-2.24)(2,-2)
		\psecurve[linecolor=violet](2,2)(0.97,2.24)(0,2)(-0.75,1)(0.75,-1)(0,-2)(-2.5,-1.5)(-3.25,0)(-2.5,1.5)(0,2)(0.75,1)
		\psecurve[linecolor=darkgreen]{<-}(-6,1.5)(-3.3,1.85)(-2.5,1.5)(-1.85,0)(-2.5,-1.5)(-3.3,-1.85)(-6,-1.5)
		\pscircle*[linecolor=white](-2.5,1.5){0.2}
		
		\psecurve[linecolor=violet](0.75,-1)(0,-2)(-2.5,-1.5)(-3.25,0)(-2.5,1.5)
		
		\pscircle*[linecolor=white](0,2){0.2}
		\pscircle*[linecolor=white](0,0){0.2}
		\pscircle*[linecolor=white](0,-2){0.2}
		
		\psecurve[linecolor=violet]{->}(0.75,1)(-0.75,-1)(0,-2)(0.97,-2.24)(2,-2)
		\psecurve[linecolor=violet](0,2)(-0.75,1)(0.75,-1)(0,-2)
		\psecurve[linecolor=violet](-2.5,-1.5)(-3.25,0)(-2.5,1.5)(0,2)(0.75,1)(-0.75,-1)
		
		\pscircle*[linecolor=white](-2.5,-1.5){0.2}
		\psecurve[linecolor=darkgreen](-2.5,1.5)(-1.85,0)(-2.5,-1.5)(-3.3,-1.85)(-6,-1.5)
		
		\psline[linecolor=violet]{->}(-3.25,-0.1)(-3.25,0.1)
		\pscircle[linestyle=dotted](-1,0){3.05}
		
		\uput{0.2}[45](0.97,2.24){$\textcolor{violet}{q}$}
		\uput{0.2}[-45](0.97,-2.24){$\textcolor{violet}{q}$}
		\uput{0.2}[135](-3.3,1.85){$\textcolor{darkgreen}{p}$}
		\uput{0.2}[-135](-3.3,-1.85){$\textcolor{darkgreen}{p}$}

		\uput{2.5}[180](-1,0){$a$}
		\uput{2.3}[-90](-1,0){$b$}
		\uput{2.1}[0](-1,0){$c$}
		\uput{2.3}[90](-1,0){$d$}

		\end{pspicture}
		\caption{$T_{2,-3}$}\label{fig:mutationpretzeltangleT}
	\end{subfigure}
	\quad
	\begin{subfigure}[b]{0.67\textwidth}\centering
		\psset{unit=0.2}
		\begin{pspicture}(-22,-11)(22,11)
		\rput(-12,0){\psrotate(0,0){-90}{

				\pscustom[fillstyle=solid,fillcolor=lightgray]{\psecurve[linewidth=0pt](10.5;45)(8;70)(8;-10)(10;-45)
					\psarc[linewidth=0pt](0,0){8}{-10}{70}
				}
				
				\pscustom[fillstyle=solid,fillcolor=gray]{\psecurve[linewidth=0pt](8;-70)(8;120)(10;130)(10;140)(8;155)(8;-95)
					\psarcn[linewidth=0pt](0,0){8}{155}{120}
				}

				\psarc[linecolor=red](0,0){8}{0}{360}
				
				\SpecialCoor
				\rput(8;45){\pscircle*[linecolor=white]{1}\pscircle[linecolor=violet]{1}}
				\rput(8;135){\pscircle*[linecolor=white]{1}\pscircle[linecolor=violet]{1}}
				\rput(8;225){\pscircle*[linecolor=white]{1}\pscircle[linecolor=darkgreen]{1}}
				\rput(8;315){\pscircle*[linecolor=white]{1}\pscircle[linecolor=darkgreen]{1}}
				
				
				\psecurve[linecolor=blue]%
				(10;130)(10;140)(8;155)%
				(8;-95)(11;-45)%
				(10.5;45)(8;70)%
				(8;-10)%
				(10;-45)(8;-70)%
				(8;120)(10;130)(10;140)(8;155)%

				\psdot(8;155)%
				\psdot(8;120)%
				\psdot(8;70)%
				\psdot(8;-10)%
				\psdot(8;-70)%
				\psdot(8;-95)%
			}
			\rput[b](8.7;65){$d$}
			\rput[l](8;30){~$x_1$}
			\rput[l](8;-20){~$x_2$}
			\rput[b](7.3;-100){$b$}
			\rput[l](8;-160){~$a_2$}
			\rput[l](8;-185){~$a_1$}
		}

		\rput(12,0){\psrotate(0,0){-90}{
				
				\pscustom[fillstyle=solid,fillcolor=lightgray]{\psecurve[linewidth=0pt](9.5;45)(8;20)(8;-60)(9.5;-45)
					\psarc[linewidth=0pt](0,0){8}{-60}{20}
				}
				
				\pscustom[fillstyle=solid,fillcolor=gray]{\psecurve[linewidth=0pt](5.5;100)(8;-120)(10;-130)(10;-140)(8;-155)(8;95)
					\psarc[linewidth=0pt](0,0){8}{-155}{-120}
				}
				
				\psarc[linecolor=red](0,0){8}{0}{360}
				
				\SpecialCoor
				\rput(8;45){\pscircle*[linecolor=white]{1}\pscircle[linecolor=violet]{1}}
				\rput(8;135){\pscircle*[linecolor=white]{1}\pscircle[linecolor=violet]{1}}
				\rput(8;225){\pscircle*[linecolor=white]{1}\pscircle[linecolor=violet]{1}}
				\rput(8;315){\pscircle*[linecolor=white]{1}\pscircle[linecolor=violet]{1}}

				
				\psecurve[linecolor=blue]%
				(10;-130)(10;-140)(8;-155)%
				(8;95)(11;45)%
				(10.5;-45)(8;-80)%
				(8;60)
				(9.5;45)(8;20)
				(8;-60)(9.5;-45)(10;45)
				(8;75)%
				(5.5;100)%
				(8;-120)(10;-130)(10;-140)(8;-155)%
				
				\psdot(8;-155)%
				\psdot(8;-120)%
				\psdot(8;-80)%
				\psdot(8;-60)%
				\psdot(8;20)%
				\psdot(8;60)%
				\psdot(8;75)%
				\psdot(8;95)%
			}
			\rput[b](8.7;-245){$d'$}
			\rput[r](8;-210){$y_1$~}
			\rput[r](8;-170){$y_2$~}
			\rput[b](7;-154){$y_3$}
			\rput[b](7.3;-70){$b'$}
			\rput[r](8;-29){$c_3$~}
			\rput[r](8;-15){$c_2$~}
			\rput[l](8;5){~$c_1$}
		}
		
		\psline[linestyle=dashed](-6.25,5.65685424949236)(6.25,5.65685424949236)
		
		\psline[linestyle=dashed](-6.25,-5.65685424949236)(6.25,-5.65685424949236)
		
		\end{pspicture}
		\caption{A Heegaard diagram for $T_{2,-3}$}
		\label{fig:mutationpretzeltangleHD}
	\end{subfigure}
	\\
	\vspace*{11pt}
	\begin{subfigure}[b]{0.95\textwidth}\centering
		\begin{tabular}{c|c|c|c}
			
			\textbf{site a} & \textbf{site b} & \textbf{site c} & \textbf{site d} \\ 
			$a_1y_1: \textcolor{darkgreen}{p}^{0} \textcolor{violet}{q}^{+3} \delta^{-\frac{1}{2}} $
			&
			$by_1: \textcolor{darkgreen}{p}^{-1} \textcolor{violet}{q}^{+1} \delta^{0} $ 
			&
			$x_1c_1: \textcolor{darkgreen}{p}^{+1} \textcolor{violet}{q}^{+2} \delta^{-\frac{1}{2}} $ 
			&
			\cancel{$dy_1: \textcolor{darkgreen}{p}^{+1} \textcolor{violet}{q}^{+3} \delta^{0}$} 
			\\ 
			$a_1y_2: \textcolor{darkgreen}{p}^{0} \textcolor{violet}{q}^{+1} \delta^{-\frac{1}{2}} $ 
			&
			$by_2: \textcolor{darkgreen}{p}^{-1} \textcolor{violet}{q}^{-1} \delta^{0} $ 
			& 
			$x_1c_2: \textcolor{darkgreen}{p}^{+1} \textcolor{violet}{q}^{0} \delta^{-\frac{1}{2}} $ 
			& 
			$dy_2: \textcolor{darkgreen}{p}^{+1} \textcolor{violet}{q}^{+1} \delta^{0} $ 
			\\ 
			$a_1y_3: \textcolor{darkgreen}{p}^{0} \textcolor{violet}{q}^{-1} \delta^{-\frac{1}{2}} $ 
			& 
			\cancel{$by_3: \textcolor{darkgreen}{p}^{-1} \textcolor{violet}{q}^{-3} \delta^{0}$} 
			& 
			$x_1c_3: \textcolor{darkgreen}{p}^{+1} \textcolor{violet}{q}^{-2} \delta^{-\frac{1}{2}} $ 
			& 
			$dy_3: \textcolor{darkgreen}{p}^{+1} \textcolor{violet}{q}^{-1} \delta^{0} $ 
			\\ 
			$a_2y_1: \textcolor{darkgreen}{p}^{0} \textcolor{violet}{q}^{+1} \delta^{-\frac{1}{2}} $ 
			& 
			$x_1b': \textcolor{darkgreen}{p}^{+1} \textcolor{violet}{q}^{-3} \delta^{-1} $  
			& 
			$x_2c_1: \textcolor{darkgreen}{p}^{-1} \textcolor{violet}{q}^{+2} \delta^{-\frac{1}{2}} $ 
			& 
			\cancel{$x_1d': \textcolor{darkgreen}{p}^{+1} \textcolor{violet}{q}^{+3} \delta^{-1}$} 
			\\ 
			$a_2y_2: \textcolor{darkgreen}{p}^{0} \textcolor{violet}{q}^{-1} \delta^{-\frac{1}{2}} $ 
			& 
			\cancel{$x_2b': \textcolor{darkgreen}{p}^{-1} \textcolor{violet}{q}^{-3} \delta^{-1}$}  
			& 
			$x_2c_2: \textcolor{darkgreen}{p}^{-1} \textcolor{violet}{q}^{0} \delta^{-\frac{1}{2}} $ 
			& 
			$x_2d': \textcolor{darkgreen}{p}^{-1} \textcolor{violet}{q}^{+3} \delta^{-1} $   
			\\ 
			$a_2y_3: \textcolor{darkgreen}{p}^{0} \textcolor{violet}{q}^{-3} \delta^{-\frac{1}{2}} $ 
			& 
			& 
			$x_2c_3: \textcolor{darkgreen}{p}^{-1} \textcolor{violet}{q}^{-2} \delta^{-\frac{1}{2}} $ 
			&
			\\ 
		\end{tabular} 
		\caption{A table of the generators of $\HFT(T_{2,-3})$ and their gradings. The generators that can be cancelled are crossed out.}\label{fig:mutationpretzeltangleTresult}
	\end{subfigure}
	\caption{The calculation of $\HFT$ for the $(2,-3)$-pretzel tangle $T_{2,-3}$, see example~\ref{exa:pretzeltangle}}\label{fig:mutationexample}
\end{figure}

\begin{example}[the $(2,-3)$-pretzel tangle]\label{exa:pretzeltangle}
	In figure~\ref{fig:mutationexample}, we compute $\HFT$ for the $(2,-3)$-pretzel tangle $T_{2,-3}$. Note that the relative gradings of the generators have been lifted to absolute ones in such a way that they agree with the gradings of the corresponding Kauffman states of the diagram in figure~\ref{fig:mutationpretzeltangleT}. The shaded regions in the Heegaard diagram in figure~\ref{fig:mutationpretzeltangleHD} show the only two domains that contribute to the differential. It is interesting to note that if we set $t:=\textcolor{darkgreen}{p}=\textcolor{violet}{q}$, then the results for the sites $a$ and $c$ are the same without modification, and those for the sites $b$ and $d$ are the same after reversing the orientation $t\leftrightarrow t^{-1}$. 
	
	If bigraded knot Floer homology were mutation invariant, one might expect that also $\HFT(T_{2,-3},b)\cong\HFT(T_{2,-3},d)$. This, however, is only true for the $\delta$-graded version. One might want to interpret the fact that the symmetry relations for $\HFT$ from theorem~\ref{thm:fourendedHFT} are slightly weaker than those for $\nabla_T^s$ from theorem~\ref{thm:fourendedonecolour} as a first indication \textit{why} mutation invariance of bigraded knot Floer homology does not hold, but might be expected for the $\delta$-graded version. In fact, in~\cite{pqMod}, I prove that mutation about the tangle $T_{2,-3}$ preserves $\delta$-graded knot Floer homology, using a slightly more sophisticated invariant based on $\HFT$. 
\end{example}

	%

\end{document}